\algnewcommand{\LeftComment}[1]{\Statex \(\triangleright\) #1}
\theoremstyle{plain}
\newtheorem{theorem}{Theorem}[section]
\newtheorem{proposition}[theorem]{Proposition}
\newtheorem{lemma}[theorem]{Lemma}
\theoremstyle{definition}
\newtheorem{definition}[theorem]{Definition}
\newtheorem{remark}[theorem]{Remark}
\newtheorem{example}[theorem]{Example}
\newcommand{\SimpComplex}{\mathsf{SimpComplex}}
\newcommand{\Vect}{\mathsf{Vec}}
\newcommand{\ChainComplex}{\mathsf{ChComplex}}
\newcommand{\bbN}{\mathbb{N}}
\newcommand{\bbR}{\mathbb{R}}
\newcommand{\bbZ}{\mathbb{Z}} 
\renewcommand{\to}{\rightarrow}
\newcommand{\iso}{\cong}
\renewcommand{\Im}{\mathrm{Im}}
\newcommand{\Ker}{\operatorname{Ker}}
\newcommand{\Coker}{\operatorname{Coker}}
\newcommand{\define}[1]{\textbf{\boldmath{#1}}}
\newcommand{\bfz}{\textbf{z}}
\newcommand{\bfw}{\textbf{w}}
\newcommand{\bfp}{\textbf{p}}
\newcommand{\gr}{\mathsf{gr}}
\newcommand{\mpfree}{\texttt{mpfree}}
\newcommand{\multichunk}{\texttt{multi-chunk}}
\font\myfont=cmr12 at 16pt
\title{{\myfont Compression for 2-Parameter Persistent Homology}\thanks{This work was supported by the Austrian Science Fund (FWF) grant
numbers P 29984-N35 and P 33765-N.}}
\author{Ulderico Fugacci\thanks{Istituto di Matematica Applicata e Tecnologie Informatiche ``E. Magenes'', Consiglio Nazionale delle Ricerche, Italy} 
\and Michael Kerber\thanks{Graz University of Technology, Austria} 
\and Alexander Rolle\footnotemark[3]}
\date{}
\begin{document}

\maketitle
 
\begin{abstract}
    Compression aims to reduce the size of an input, while maintaining its relevant properties. 
    For multi-parameter persistent homology, 
    compression is a necessary step in any computational pipeline, 
    since standard constructions lead to large inputs, 
    and computational tasks in this area tend to be expensive.
    We propose two compression methods for chain complexes of free 2-parameter persistence modules.
    The first method extends the  multi-chunk algorithm for one-parameter persistent homology, 
    returning the smallest chain complex among all the ones quasi-isomorphic to the input. 
	The second method produces minimal presentations of the homology of the input; 
	it is based on an algorithm of Lesnick and Wright, 
	but incorporates several improvements that lead to substantial performance gains. 
	The two methods are complementary, and can be combined to compute minimal presentations for complexes with millions of generators 
	in a few seconds. 
    The methods have been implemented, and the software is publicly available. 
    We report on experimental evaluations, which demonstrate substantial improvements in performance 
    compared to previously available compression strategies. 
 \end{abstract}

\section{Introduction}
\label{sec:intro}

\paragraph{Motivation}
Persistent homology is a powerful tool in data analysis enabling the characterization and the comparison of a large variety of datasets on the basis of their topological properties. 
Specifically, the typical pipeline of persistent homology assigns to each data set a filtered simplicial complex evolving along a real-valued parameter. Then, persistent homology describes and discriminates data by studying the birth and the death of the topological features (such as connected components, loops, tunnels, and so on) occurring during the filtration.

While this strategy has proven its effectiveness in several scenarios, it is quite common in applications to face datasets that are parametrized through multiple scalar functions. This naturally calls for the development of a multi-parameter extension of persistent homology.
Despite its potential, multi-parameter persistent homology has not found as widespread distribution as the one-parameter case:
a major obstruction is the impossibility of summarizing the topological information by a complete and discrete invariant such as the barcode
in the one-parameter setting~\cite{cz-theory} (which encodes the births and deaths mentioned above). 
Still, recent work has proposed algorithmic solutions to visualize multi-parameter persistence modules~\cite{lw-multi}, to compare them
via the matching distance~\cite{klo-exact, kn-matching, bk-matching}, the bottleneck distance~\cite{dx-computing} or via kernels~\cite{cfklw-kernel,vipond-landscape}, and to decompose them into indecomposable summands~\cite{dx-generalized}.
What is common to these approaches is that the computation tends to be expensive. For instance, the algorithm to compute the matching distance
has expected running time 
$O(n^{5} \log^{3} n)$ for $2$-parameters, where 
the worst-case complexity for a single parameter is only $O(n^{1.5}\log n)$~\cite{kmn-geometry}.
So despite all efforts, distance computations in the $2$-parameter case
must be restricted in practice to modules of moderate size. 

By \textbf{compression} of a representation of a multi-parameter topological object 
(e.g. a filtered simplicial complex, filtered chain complex, or a persistence module) 
we mean replacing the representation with a smaller representation such that the relevant topological properties are preserved. 
Computations on compressed representations yield the same outcome by definition, and will perform
faster because of the reduced input size. Hence, compression has the potential to improve the performance of each of the aforementioned approaches.
We therefore consider compression as an indispensable pre-processing step of the computational pipeline of multi-parameter topological data analysis.

\paragraph{Contribution}
This paper proposes two compression algorithms for 2-pa\-ram\-e\-ter persistent homology.
Both algorithms take as input chain complexes of free 2-parameter persistence modules; 
as an example, bi-filtered simplicial complexes naturally yield such chain complexes
with simplices as free generators (see \cref{chain-complexes-from-simplicial-complexes}). 
Both our approaches return a structure
that is minimal in an appropriate sense, but they differ in how much topological information they preserve.

The first approach, called \multichunk, computes another chain complex of free 2-parameter persistence modules
with the property that it is the smallest such chain complex among all those quasi-isomorphic to the input.
The algorithm is an extension of the \emph{chunk} algorithm for one-parameter persistent homology~\cite{chunk}: it proceeds by first identifying
pairs of generators in consecutive dimensions that do not contribute to the homology of the complex,
and then changing the boundaries of the generators to remove all those pairs.
Both steps can be performed quickly in practice using similar speed-ups as in the one-parameter counterpart.
Moreover, both steps can easily be parallelized with shared-memory.

The second approach, called \mpfree, computes a \emph{minimal presentation matrix} for the homology of the chain complex in a chosen degree. 
A presentation matrix encodes the generators and relations of a module. It is minimal
if the total numbers of generators and relations is minimal. 
A minimal presentation of the homology is typically smaller than the chain complex produced by multi-chunk, 
with the price that the chain complex structure is lost. 
Some important algorithms in multi-parameter persistence require a presentation 
as input \cite{lesnick2019computing, dx-generalized}, 
and this is further motivation for computing minimal presentations efficiently.

Our algorithm \mpfree \, is an improved version of the pioneering work of Lesnick and Wright~\cite{lesnick2019computing}
which arrives at a minimal presentation matrix through column additions.
Our improvements stem from two major observations: first, their algorithm often spends more time
in \emph{looking} for the next column addition to be performed 
than it does in \emph{performing} the addition. 
We show that the necessary column operations can be predicted from previous steps, and we maintain this information in priority queues
to avoid linear scans. Second, we observe that the last step of the algorithm (which turns a so-called
semi-minimal presentation into a minimal one) can be done more efficiently using \multichunk.

Both approaches have been implemented in C++ and the software is publicly available~\cite{multi-chunk-repo, mpfree-repo}. 
We extensively benchmark our implementations on a variety of data sets including triangular mesh data,
bi-filtered flag complexes and others. We show that computing minimal chain complexes and minimal presentations scales linearly in terms of time and memory size in practice and is possible
for complexes with millions of simplices within seconds. 
In particular, our improvements lead to a significant improvement over 
the RIVET library~\cite{rivet}, 
which implements the algorithm by Lesnick and Wright.
Moreover, the resulting compressed outputs are significantly smaller than the
input and thus result in a satisfying compression rate.

We also show that \multichunk \, generally performs
faster than \mpfree, and its running time is dominated by reading the input file. More speed-ups can be achieved by first calling \multichunk, followed by \mpfree \, on the
compressed chain complex. Partially, the advantage of that strategy comes from the \emph{clearing optimization}~\cite{ck-twist,chunk}
that avoids some operations in dimension $d$ through the information obtained in dimension $d+1$. Since minimal presentations
focus on a single dimension, the advantage of clearing is mostly lost when using \mpfree \, alone.

Our experiments also show a (modest) advantage of parallelization, and 
we investigate the running time of the major subroutines of our algorithm.
Finally, we report on a perhaps counter-intuitive observation
that the algorithm is sometimes faster when applied to a chain complex
$C_3\to C_2\to C_1\to C_0\to 0$ than when only applied to the sub-complex
$C_2\to C_1\to C_0$.

\paragraph{Differences to the conference versions}
Both presented approaches appeared as prior conference papers; see~\cite{fugacci2019chunk} for \multichunk \,
and~\cite{kerber2021fast} for \mpfree. We think that because both approaches share the goal of compression
and also are symbiotic on a mathematical and implementation level, it is natural to present
them together in this journal version. 
Furthermore, in the conference versions, the two algorithms were described using different 
frameworks; in this paper we describe the two algorithms within the same framework 
(both take as input chain complexes of free, multi-parameter persistence modules). 
By unifying the language, we clarify the relationship between the two algorithms. 
Notably, we can describe
one sub-routine of \mpfree \, (the minimization) as an instance of
\multichunk, which was not clear in the conference version. 
We also provide a new complexity analysis for \mpfree, 
which sheds light on the improvement in performance over RIVET 
that we observe in the experiments. 
Moreover, we provide proof details that had to be skipped
in the conference version because of size limits.
Finally, this paper contains experimental evaluation for new benchmark instances
and investigates the interplay of both algorithms in the case of ``long'' chain complexes.

\paragraph{Related work}

Compression has been studied extensively for the one-parameter case in the context of persistence computation.
Besides the aforementioned chunk algorithm~\cite{chunk}, several papers propose the use of discrete Morse theory to collapse
pairs of simplices without changing the homotopy type, reducing
the size of the considered boundary matrix~\cite{gunther-memory,mn-morse}. 
Also, the efficiency of state-of-the-art software for flag complexes, such as Ripser~\cite{ripser} and Eirene~\cite{eirene}
is to a large extent due to implicit compression.
Another recent line of research uses the
concept of strong collapses to arrive at a small complex quickly, in general~\cite{bpp-strong}, and for flag complexes~\cite{bp-edge}.

All the above approaches are examples of \emph{lossless compression}, in the sense that the compressed complex has an equivalent persistence diagram to the input. 
Better compression rates are possible with \emph{lossy compression}. Here, it is only guaranteed that the compressed
complex has a persistence diagram that is close to the input. This idea has been extensively studied in theory~\cite{sheehy-linear,dfw-computing,bs-approximating,ckr-polynomial,ckr-improved} and practice~\cite{bcos-efficient,dsw-simba,sparips}.

In the multi-parameter case, one line of research lifts the idea of discrete Morse collapses to the multi-parameter case~\cite{Allili2017,Allili2018,Scaramuccia2018,Iuricich2016}. Our \multichunk \, algorithm is motivated by this line of research and attempts to compress the complex on the level of persistence
computation directly instead of using discrete Morse theory. This leads to a provably minimal chain complex which can be guaranteed in the
aforementioned approach only for special cases. We point out, however, that the related work returns a Forman gradient together with the cell complex,
and applications might profit from this additional structure. Finally, we compared our \multichunk \, algorithm with the approach in~\cite{Scaramuccia2018}: \multichunk \, is faster, and both approaches have comparable memory consumption and produce compressions
of similar size in practice. We refer to the the conference version of \multichunk \, \cite{fugacci2019chunk} for more details on this comparison.

Minimal presentations can be computed for
more general (graded) modules, and general algorithms are provided by computer algebra systems like
Macaulay or Singular. Lesnick and Wright~\cite{lw-multi} compare their implementation with these systems and show
that their algorithm is much faster for the special case of chain complexes of free 2-parameter persistence modules.

\paragraph{Outline}
The remainder of this paper is organized as follows. In Section \ref{sec:background} we introduce the required background notions. In Section \ref{sec:multi-chunk} we describe \multichunk, while in Section \ref{sec:mpfree} we describe \mpfree. In more detail, Section \ref{chunk:IO} discusses the input and output settings of \multichunk, Section \ref{chunk:Algorithm} describes the details of the algorithm, while Sections \ref{chunk:Correct}, \ref{chunk:Complexity}, and \ref{chunk:Optimal} focus on the correctness, the complexity, and the optimality of \multichunk, respectively. Similarly, Section \ref{mpfree:IO} discusses the input and output settings of \mpfree, Section \ref{mpfree:LW} describes the algorithm of Lesnick and Wright on which \mpfree\, is based, Sections \ref{mpfree:Queues} and \ref{mpfree:Minimization} focus on our improvements, while Section \ref{mpfree:Complexity} analyses the complexity of \mpfree. In Section \ref{sec:experiments} we present and discuss the experimental results. Finally, in Section \ref{sec:conclusions}, we summarize our contributions and we draw some conclusions. 
\ref{sec:app_details_1} contains proof details, 
and \ref{app:pseudocode} contains pseudocode.

\section{Definitions}
\label{sec:background}

This section presents standard definitions and facts about multi-parameter persistence modules. 
For more details, see~\cite{cz-theory, lesnick2019computing}. 
Fix a field $k$. All vector spaces in this section will be over $k$. 
Let $d \geq 1$ be a natural number. 

\begin{definition}
Let $\bbZ^d$ be the poset with 
$(w_1, \dots, w_d) \leq (z_1, \dots, z_d)$ if $w_i \leq z_i$ for all $1 \leq i \leq d$. 
A $d$-parameter \define{persistence module} is a functor 
$M : \bbZ^d \to \Vect$ from the poset $\bbZ^d$ to the category of vector spaces over $k$. 
A \define{homomorphism of persistence modules} is a natural transformation. 
For $\bfz \in \bbZ^d$ we write $M_{\bfz} = M(\bfz)$, 
and for $\textbf{w} \leq \bfz$ in $\bbZ^d$ we write 
$M_{\textbf{w}, \bfz} = M(\textbf{w} \leq \bfz) : M_{\textbf{w}} \to M_{\bfz}$.
\end{definition}

The category of persistence modules is an abelian category, 
because it is a category of functors valued in an abelian category 
\cite[8.3.6]{kashiwara-schapira}. 
So, we can form direct sums, kernels, and cokernels, and these are computed ``pointwise''. 
For example, the kernel of a homomorphism $f : M \to N$ of $d$-parameter persistence modules 
is the persistence module~$\Ker(f)$ with 
$\Ker(f)_{\bfz} = \Ker \left( f_{\bfz} : M_{\bfz} \to N_{\bfz} \right)$ 
for all $\bfz \in \bbZ^d$, 
and with the induced homomorphisms 
$\Ker(f)_{\textbf{w}, \bfz} : \Ker(f)_{\textbf{w}} \to \Ker(f)_{\bfz}$.

The term ``persistence module'' comes from the following alternative definition:

\begin{remark} \label{comm-alg-def}
Equivalently, a $d$-parameter persistence module is a module~$M$ over the polynomial ring 
$k[x_1, \dots, x_d]$ equipped with a $\bbZ^d$-grading, 
which is a vector space decomposition $M = \oplus_{\bfz \in \bbZ^d} M_{\bfz}$ 
such that $x_i \cdot M_{\bfz} \subseteq M_{\bfz + \textbf{e}_i}$ 
for all $\bfz \in \bbZ^d$, where $\textbf{e}_i$ is the $i^{\mathrm{th}}$ standard basis vector. 
An element $v$ of $M$ such that $v \in M_{\bfz}$ for some $\bfz \in \bbZ^d$ is called homogenous. 
In this language, a homomorphism $f : M \to N$ of persistence modules 
is a module homomorphism such that $f(M_{\bfz}) \subseteq N_{\bfz}$ for all $\bfz \in \bbZ^d$. 
\end{remark}

If $M : \bbZ^d \to \Vect$ is a persistence module, and $v \in M_{\bfz}$ for some $\bfz \in \bbZ^d$, 
we say that $\bfz$ is the \define{grade} of $v$, and we write $\gr(v) = \bfz$. 
In the language of \cref{comm-alg-def}, only the homogenous elements have a grade. 

A standard way to understand persistence modules is to represent them using homomorphisms 
between persistence modules with a simple form. These are the free persistence modules:

\begin{definition}
For $\textbf{u} \in \bbZ^d$, let $E^{\textbf{u}}$ be the $d$-parameter persistence module with 

\[
	E^{\textbf{u}}_{\bfz} = 
	\begin{cases}
	k \; \; \text{if} \; \textbf{u} \leq \bfz  \\
	0 \; \; \text{else}
	\end{cases}	
\text{and} \; \; \;
	E^{\textbf{u}}_{\bfw, \bfz} = 
	\begin{cases}
	\text{id} \; \; \text{if} \; \textbf{u} \leq \bfw  \\
	0 \; \; \text{else} \; .
	\end{cases}
\]
A $d$-parameter persistence module~$F$ is \define{free} 
if there is a multiset $G$ of elements in $\bbZ^d$ such that 
$F \iso \oplus_{\textbf{u} \in G} E^{\textbf{u}}$. 
Using the language of \cref{comm-alg-def}, a \define{basis} for a free persistence module~$F$ 
is a minimal homogenous set of generators for $F$. 
\end{definition}

It follows from a graded version of Nakayama's lemma 
\cite[Corollary 4.8]{eisenbud} 
that the number of elements at each grade in a basis of a free persistence module is 
an isomorphism invariant.

The input to the algorithms of this paper will be chain complexes of free persistence modules, 
which we now define. 
Because we use subscripts for grades, 
we will use superscripts for the dimension of a persistence module in a chain complex; 
note that this is contrary to the usual convention of using subscripts for the dimension. 
Because cohomology will not appear in this paper, this should not cause confusion.

\begin{definition}
A \define{chain complex} of free persistence modules $F^{\bullet}$ is a diagram of the form
\[
	 \cdots \xrightarrow{\partial^3} F^2 \xrightarrow{\partial^2} F^1 \xrightarrow{\partial^1} F^0
\]
where each $F^i$ is a free $d$-parameter persistence module, 
and $\partial^{i} \circ \partial^{i + 1} = 0$ for all $i \geq 1$. 
If $F^{\bullet}$ and $G^{\bullet}$ are chain complexes of free persistence modules, 
then a \define{chain map} $f^{\bullet} : F^{\bullet} \to G^{\bullet}$ consists of 
homomorphisms $f^i : F^i \to G^i$ for all $i \geq 0$, 
such that the following diagram commutes for all $i \geq 0$:
\[
\begin{tikzcd}
&F^{i+1} \arrow[r, "f^{i+1}"] \arrow[d, "\partial^{i+1}"'] 
&G^{i+1} \arrow[d, "\partial^{i+1}"] \\
&F^{i} \arrow[r, "f^{i}"'] 
&G^{i} \\
\end{tikzcd}
\]
\end{definition}

The homology of a chain complex of free persistence modules is defined in the usual way:

\begin{definition}
Let $F^{\bullet}$ be a chain complex of free persistence modules. For $n \geq 0$, 
the \define{homology} $H^n(F^{\bullet})$ is the $d$-parameter persistence module 
$\Ker(\partial^n) / \Im(\partial^{n+1})$. 
If $f^{\bullet} : F^{\bullet} \to G^{\bullet}$ is a chain map, 
then there are induced homomorphisms 
$f^{n}_* : H^n(F^{\bullet}) \to H^n(G^{\bullet})$. 
\end{definition}

\begin{example} \label{chain-complexes-from-simplicial-complexes}
A $d$-parameter filtered simplicial complex is a functor $S : \bbZ^d \to \SimpComplex$ 
valued in the category of simplicial complexes, 
such that for all $\textbf{w} \leq \bfz$ in $\bbZ^d$, 
the map $S_{\textbf{w}} \to S_{\bfz}$ is an inclusion. 
Let $C : \SimpComplex \to \ChainComplex$ be the oriented chain complex functor, 
so that the simplicial homology of a simplicial complex $L$ is the homology of the chain complex $C(L)$. 
Composing $S$ with $C$ we get a functor $CS : \bbZ^d \to \ChainComplex$. 
In particular, for each $i \geq 0$ we have a persistence module 
$CS^i : \bbZ^d \to \Vect$ where $CS^i_{\bfz}$ is the $i^{\mathrm{th}}$ degree part of $CS_{\bfz}$. 
If each $CS^i$ is free, then the filtered simplicial complex $S$ is called ``one-critical'', 
and we have a chain complex of free persistence modules 
$ \cdots CS^2 \to CS^1 \to CS^0$. 
Such filtered simplicial complexes are the main source of examples for this paper.
\end{example}


\begin{definition}
Let $F^{\bullet}$ and $G^{\bullet}$ be chain complexes of free persistence modules. $F^{\bullet}$ and $G^{\bullet}$ are \define{isomorphic} if there exist chain maps $f^{\bullet} : F^{\bullet} \to G^{\bullet}$ and $g^{\bullet} : G^{\bullet} \to F^{\bullet}$ such that $g \circ f = \mathrm{id}_{F^{\bullet}}$ and $f \circ g = \mathrm{id}_{G^{\bullet}}$.
\end{definition}

\begin{definition}
Let $F^{\bullet}$ and $G^{\bullet}$ be chain complexes of free persistence modules. 
We say that $F^{\bullet}$ and $G^{\bullet}$ are \define{homology equivalent} 
if for all $n \geq 0$ there is an isomorphism 
$H^n(F^{\bullet}) \iso H^n(G^{\bullet})$. 
We say that $F^{\bullet}$ and $G^{\bullet}$ are \define{quasi-isomorphic} 
if there is a chain map $f : F^{\bullet} \to G^{\bullet}$ 
such that all induced homomorphisms 
$f^{n}_* : H^n(F^{\bullet}) \to H^n(G^{\bullet})$ are isomorphisms.
\end{definition}

An important class of quasi-isomorphisms are the homotopy equivalences:

\begin{definition}
Let $F^{\bullet}$ and $G^{\bullet}$ be chain complexes of free persistence modules. 
If $f,g : F^{\bullet} \to G^{\bullet}$ are chain maps, 
then we say that $f$ and $g$ are \define{chain-homotopic} if there are homomorphisms 
$h^{n} : F^{n} \to G^{n+1}$ such that 
$\partial^{n+1} \circ h^{n} + h^{n-1} \partial^{n} = f^n - g^n$ for all $n \geq 0$. 
We say that $F^{\bullet}$ and $G^{\bullet}$ are \define{homotopy equivalent} 
if there are chain maps $f : F^{\bullet} \to G^{\bullet}$ 
and $g : G^{\bullet} \to F^{\bullet}$ such that $f \circ g$ is homotopic to $\mathrm{id}_{G^{\bullet}}$ 
and $g \circ f$ is homotopic to $\mathrm{id}_{F^{\bullet}}$.
\end{definition}

If $f$ and $g$ define a homotopy equivalence between $F^{\bullet}$ and $G^{\bullet}$, 
then $f$ and $g$ are necessarily quasi-isomorphisms.

\begin{definition}
Let $M$ be a $d$-parameter persistence module. 
A \define{presentation} of $M$ is a homomorphism 
between free persistence modules $p : F^1 \to F^0$ such that $\Coker(p) \iso M$. 
Write $q : F^0 \rightarrow \Coker(p)$ for the canonical map;
the presentation $p$ is \define{minimal} if
$(1)$ $q$ maps a basis of $F^0$ bijectively to a minimal homogenous set of generators of $\Coker(p)$, and
$(2)$ $p$ maps a basis of $F^1$ bijectively to a minimal homogenous set of generators of $\Ker(q)$.
\end{definition}

We give an example of a simplicial bi-filtration and a minimal presentation of its homology 
at the end of this section. 
Any presentation of a persistence module~$M$ can be obtained (up to isomorphism) 
from a minimal presentation $p$
by taking the direct sum with maps of the form $\mathrm{id}_H : H \rightarrow H$
or of the form $H \rightarrow 0$, where $H$ is free.
Following Lesnick-Wright~\cite{lesnick2019computing}, we say that a presentation is \define{semi-minimal} 
if it can be obtained from a minimal presentation by taking the direct sum with maps of the form 
$\mathrm{id}_H : H \rightarrow H$, where $H$ is free.

\paragraph{Graded matrices}

If $F$ is a finitely-generated free $d$-parameter persistence module, 
and $B = \{B_i\}_{i \in I}$ is an ordered basis of $F$, 
then we can represent an element $v \in F_{\bfz}$ by a vector $[v]^B \in k^{|B|}$, as follows. 
If $\gr(B_i) \nleq \bfz$, we set $[v]^B_i = 0$. Otherwise, we choose $[v]^B_i$ such that we have
\[
	v = \sum_{i : \gr(B_i) \leq \bfz} [v]^B_i \cdot F_{\gr(B_i), \bfz} \left( B_i \right).
\]
Now, if $f : F \to F'$ is a homomorphism between finitely-generated free persistence modules, 
and $B, B'$ are ordered bases of $F, F'$, we can represent $f$ by a \define{graded matrix}, 
which is a matrix with entries in $k$ 
such that each column and each row is annotated with a grade in $\bbZ^d$. 
In detail, $f$ is represented by the graded matrix $[f]^{B, B'}$ whose $j^{\mathrm{th}}$ 
column is the vector $[f(B_j)]^{B'}$, 
and such that the grade of the $j^{\mathrm{th}}$ column is $\gr(B_j)$, 
and the grade of the $\ell^{\mathrm{th}}$ row is $\gr(B'_{\ell})$. 

In this paper, the input to our algorithms will be chain complexes $F^{\bullet}$ 
of finitely-generated free persistence modules, together with an ordered basis for each $F^{i}$. 
Therefore each homomorphism $\partial^{n} : F^n \to F^{n-1}$ is represented by a graded matrix. 
We assume that a graded matrix is stored in some sparse column representation, 
that is, the row indices of non-zero entries in a column, 
together with the entry in $k$, are stored in some container data structure. 
Moreover, we always assume that graded matrices have their rows and columns stored 
in an order that is compatible with the partial order on their grades, 
e.g., if the grade of column $i$ is strictly less than the grade of column $j$, then $i < j$. 

\begin{definition}
Let $A$ be a graded matrix. 
If $C$ is a non-zero column of $A$, the \define{pivot} of $C$ is the largest row index $j$ 
such that the $j^{\mathrm{th}}$ entry of $C$ is non-zero. 
We call a non-zero column \define{local} if the grade of its pivot equals the grade of the column. In such a case, we call the pivot the \define{local pivot}.
\end{definition}

We now present a small example of a minimal presentation 
that encodes the homology of a bi-filtration. 
We display the bi-filtration in \cref{fig:bifiltration}. 
We work with $\bbZ_2$ coefficients. 

\begin{figure}[h]
\centering
\begin{overpic}[width=0.5\textwidth]{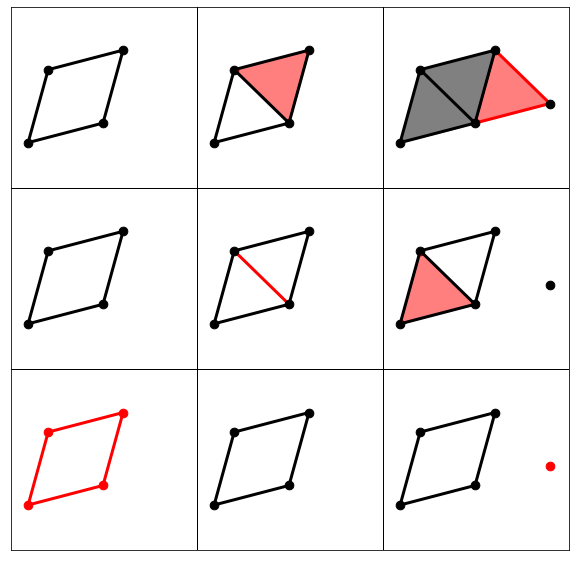}
		\put (70,67) {\footnotesize A}
		\put (72,89) {\footnotesize B}
		\put (82,70) {\footnotesize C}
		\put (87,91) {\footnotesize D}
		\put (94,73) {\footnotesize E}
		\put (18, -5) {0}
		\put (50, -5) {1}
		\put (81, -5) {2}
		\put (-5, 18) {0}
		\put (-5, 50) {1}
		\put (-5, 81) {2}
\end{overpic}
\caption{A simplicial bi-filtration. 
Simplices in red are at their minimal grade.}
\label{fig:bifiltration}
\end{figure}

To compute the homology $H^1$, we consider the boundary matrices:
\[
\scalebox{0.75}{
$[\partial^2]=\begin{array}{cc|ccc}
& & ABC & BCD & DCE\\
& & (2,1) & (1,2) & (2,2)\\
\hline
AB & (0,0) & 1 & 0 & 0\\
AC & (0,0) & 1 & 0 & 0\\
BC & (1,1) & 1 & 1 & 0\\
BD & (0,0) & 0 & 1 & 0\\
DC & (0,0) & 0 & 1 & 1\\
CE & (2,2) & 0 & 0 & 1\\
DE & (2,2) & 0 & 0 & 1
\end{array}$
}
\]
\[
\scalebox{0.75}{
$[\partial^1]=\begin{array}{cc|ccccccc}
& & AB & AC & BC & BD & DC & CE & DE\\
& & (0,0) & (0,0) & (1,1) & (0,0) & (0,0) & (2,2) & (2,2)\\
\hline
A & (0,0) & 1 & 1 & 0 & 0 & 0 & 0 & 0\\
B & (0,0) & 1 & 0 & 1 & 1 & 0 & 0 & 0\\
C & (0,0) & 0 & 1 & 1 & 0 & 1 & 1 & 0\\
D & (0,0) & 0 & 0 & 0 & 1 & 1 & 0 & 1\\
E & (2,0) & 0 & 0 & 0 & 0 & 0 & 1 & 1
\end{array}$
}
\]
\vspace{5pt}
The following graded matrix is a minimal presentation for the homology:
\[
\scalebox{0.75}{
$P = \begin{array}{c|cc}
& (1,2) & (2,1)\\
\hline
(0,0) & 1 & 0\\
(1,1) & 1 & 1
\end{array}$
}
\]
Even in this toy example we see the compression achieved by passing to a minimal presentation. 
The minimal presentation encodes the homology up to isomorphism. 
In particular, from the minimal presentation we can read off the dimension of the 
homology of the bi-filtration at each grade. 
Write $P_{\leq \bfz}$ for the sub-matrix of $P$ consisting of 
the columns and rows with grade less than or equal to $\bfz$; 
then the homology at grade $\bfz$ is isomorphic to the cokernel of $P_{\leq \bfz}$. 
For example, at $\bfz = (1,2)$, the homology has dimension equal to
\[
	\mathrm{dim} \; \mathrm{coker} \; P_{\leq (1,2)} = 
	\mathrm{dim} \; \mathrm{coker} \; \left( \begin{array}{c}
	1\\
	1
	\end{array} \right) = 1.
\]

Minimal presentations are not unique. In this case, 
the following graded matrix is also a minimal presentation:
\[
\scalebox{0.75}{
$P' = \begin{array}{c|cc}
& (1,2) & (2,1)\\
\hline
(0,0) & 0 & 1\\
(1,1) & 1 & 1
\end{array}$
}
\]
As we will discuss in \cref{sec:mpfree}, 
computing a minimal presentation involves computing a basis for 
$\mathrm{ker} \; [\partial^1]$. 
The presentations $P$ and $P'$ correspond to different bases $\mathcal{B}$ and $\mathcal{B}'$. 
Both $\mathcal{B}$ and $\mathcal{B}'$ 
include the cycle $AB + BD + DC + CA$ that is born at $(0,0)$, 
but $\mathcal{B}$ has the cycle $AB + BC + CA$ at $(1,1)$, 
while $\mathcal{B}'$ has the cycle $BC + CD + DB$ at $(1,1)$.

\section{Multi-chunk compression}
\label{sec:multi-chunk}

Given a chain complex $F^{\bullet}$ of finitely-generated free persistence modules, together with an ordered basis for each $F^{n}$, we define a compression algorithm called \multichunk \, that returns a chain complex $G^{\bullet}$ of finitely-generated free persistence modules such that:
\begin{itemize}
  \item $F^{\bullet}$ and $G^{\bullet}$ are homotopy equivalent;
  \item given a chain complex $\bar{F}^{\bullet}$ of finitely-generated free persistence modules quasi-isomorphic to $F^{\bullet}$, for any dimension $n$ and any grade $\bfp$, the number of generators in $G^{n}$ at $\bfp$ is less than or equal to the number of generators in $\bar{F}^{n}$ at $\bfp$.
\end{itemize}

\subsection{Multi-chunk compression: Input and output settings}\label{chunk:IO}

For the sake of clarity, \multichunk \, is described for the case of 2-parameter persistence modules, and with $k=\bbZ_2$.
The algorithm can easily be reformulated and generalized for an arbitrary base field and an arbitrary number of parameters. 

Given a chain complex $F^{\bullet}$ of finitely-generated free persistence modules and a dimension $n$, choose for each $n$ a basis $B^n$ of $F^{n}$.
As in \cref{sec:background}, 
for each basis $B^n$, we fix an ordering which is compatible with the grade. That is, for two generators $g_1$ and $g_2$ of $B^n$ such that $\gr(g_1)<\gr(g_2)$, $g_1$ precedes $g_2$. 

As previously mentioned, the boundary homomorphism $\partial^{n} : F^n \to F^{n-1}$ is represented by a graded matrix $[\partial^{n}]^{B^n, B^{n-1}}$ whose
columns are in correspondence with the generators of the basis $B^n$ of $F^n$.
Specifically, each column $c$ (in correspondence with a generator $g$) encodes the boundary of $g$ with respect to the basis $B^{n-1}$ of $F^{n-1}$.
Moreover, the columns and the rows of the matrix will be sorted in accordance with the ordering adopted by $B^n$ and $B^{n-1}$. We will say that a generator $g$ of $B^n$ is of index $i$ if it is in correspondence with the $i^{th}$ column of the matrix $[\partial^{n}]^{B^n, B^{n-1}}$.
Similarly, a generator $g$ of $B^{n-1}$ is of index $i$ if it is in correspondence with the $i^{th}$ 
row of the matrix $[\partial^{n}]^{B^n, B^{n-1}}$.
Given two elements $c_1 \in F^n_{\bfz}$ and $c_2 \in F^n_{\bfw}$, we denote as $\langle c_1, c_2 \rangle$ the number (mod 2) of the generators of $B^n$ appearing with a non-zero coefficient in the representations of $c_1$ and $c_2$ in terms of the elements of $B^n$.

In the following, we will assume that the data structure adopted for encoding the matrix $[\partial^{n}]^{B^n, B^{n-1}}$ will be able to store, for each column $c$, the following information:
\begin{itemize}
    \item the index $i(c)$ of the corresponding generator $g$;
    \item the grade of $g$;
    \item the dimension of $g$;
    \item the boundary of $g$ (the list of the generators of $B^{n-1}$ which appear with a non-zero coefficient in the boundary of $g$ (as well as their indices, grades, and dimensions)).
\end{itemize}

By a small abuse of notation, in the following we will often write $c$ in place of $g$ and vice versa.

The algorithm \multichunk \, takes as input the collection of graded matrices representing the boundary homomorphisms $\partial^{n} : F^n \to F^{n-1}$ and it returns a collection of graded matrices representing boundary homomorphisms $\partial^{n} : G^n \to G^{n-1}$.

Focusing on an example, the input of \multichunk \, for the bi-filtration depicted in Figure \ref{fig:bifiltration} consists of the following boundary matrices:

\[
\scalebox{0.75}{
$[\partial^{2}]^{B^2, B^{1}}=\begin{array}{cc|ccc}
& & ABC & BCD & CDE\\
& & (2,1) & (1,2) & (2,2)\\
\hline
AB & (0,0) & 1 & 0 & 0\\
AC & (0,0) & 1 & 0 & 0\\
BD & (0,0) & 0 & 1 & 0\\
CD & (0,0) & 0 & 1 & 1\\
BC & (1,1) & 1 & 1 & 0\\
CE & (2,2) & 0 & 0 & 1\\
DE & (2,2) & 0 & 0 & {\bf 1}
\end{array}$
}
\]
\[
\scalebox{0.75}{
$[\partial^{1}]^{B^1, B^{0}}=\begin{array}{cc|ccccccc}
& & AB & AC & BD & CD & BC & CE & DE\\
& & (0,0) & (0,0) & (0,0) & (0,0) & (1,1) & (2,2) & (2,2)\\
\hline
A & (0,0) & 1 & 1 & 0 & 0 & 0 & 0 & 0\\
B & (0,0) & {\bf 1} & 0 & 1 & 0 & 1 & 0 & 0\\
C & (0,0) & 0 & {\bf 1} & 0 & 1 & 1 & 1 & 0\\
D & (0,0) & 0 & 0 & {\bf 1} & {\bf 1} & 0 & 0 & 1\\
E & (2,0) & 0 & 0 & 0 & 0 & 0 & 1 & 1
\end{array}$
}
\]
expressed in terms of the following bases (already sorted with an ordering compatible with the grades):
$B^0=\{A, B, C, D, E\}$, 
$B^1=\{AB$, $AC$, $BD$, $CD$, $BC$, $CE$, $DE\}$, and 
$B^2=\{ABC, BCD, CDE\}$. 
In order to highlight them, the local pivots of the boundary matrices (which will play a crucial role in the first phase of \multichunk) have been written in bold.


\subsection{Multi-chunk compression: Algorithm}\label{chunk:Algorithm}

The algorithm \multichunk \, works in three phases:
\begin{itemize}
  \item {\em local reduction};
  \item {\em compression};
  \item {\em removal of local columns}.
\end{itemize}

The next paragraphs will be devoted to describing these phases. Pseudocode of the three phases is provided in \ref{app:pseudocode}.

\paragraph{Phase I: Local reduction}
The goal of this phase is to perform a preliminary reduction of the graded matrices aimed at labelling the generators of each basis $B^n$ as \emph{global}, \emph{local positive} or \emph{local negative}.


We proceed in decreasing order with respect to the dimension $n$. 
The phase of local reduction works as follows on the graded boundary matrix $[\partial^{n}]^{B^n, B^{n-1}}$ (see Alg.~\ref{alg:phaseI}).  
The algorithm traverses the columns in increasing order with respect to the index $i$ and performs the following operations on a column $c$.
If $c$ (or better, its corresponding generator $g$) has been already labeled (as global, local positive, or local negative), do nothing.
Otherwise,
as long as $c$ has a local pivot (i.e., if the grade of the pivot of $c$ equals the one of $c$) and there is a column $c'$ with $i(c')<i(c)$
and the same local pivot as $c$,
perform the column addition $c\gets c+ c'$.
If at the end of this loop, the column $c$ does not have a local pivot, label the column as global and proceed.
Otherwise, label $c$ as local negative and its local pivot $c''$ as local positive.
In this case, $(c'',c)$ is called a \emph{local pair}.

Any column addition performed within the local reduction involves columns having the same grade. So, the local reduction operates independently on columns
of different grade. We call blocks of columns with the same grade \emph{chunks}; hence the name of the algorithm. As just discussed, operations on one chunk do not affect columns of any other chunk, hence the local reduction phase can be readily invoked in parallel on the chunks of the input matrix $[\partial^{n}]^{B^n, B^{n-1}}$.

A complete labeling of all the generators of the chosen bases of $F^\bullet$ is achieved by applying local reduction to all the graded boundary matrices of $F^\bullet$.
Finally, note that by proceeding in decreasing dimension, we avoid performing any column additions on local positive columns. This is reminiscent of the \emph{clearing optimization} in the one-parameter version~\cite{ck-twist,chunk}.

Running Phase I of \multichunk \, on the considered example, we obtain the following boundary matrices:
\[
\scalebox{0.75}{
$[\partial^{2}]^{B^2, B^{1}}=\begin{array}{cc|ccc}
& & ABC & BCD & \textcolor{red}{CDE}\\
& & (2,1) & (1,2) & (2,2)\\
\hline
\textcolor{red}{AB} & (0,0) & {\bf 1} & 0 & 0\\
\textcolor{red}{AC} & (0,0) & {\bf 1} & 0 & 0\\
\textcolor{red}{BD} & (0,0) & 0 & {\bf 1} & 0\\
CD & (0,0) & 0 & 1 & 1\\
BC & (1,1) & 1 & 1 & 0\\
CE & (2,2) & 0 & 0 & 1\\
\textcolor{green}{DE} & (2,2) & 0 & 0 & 1
\end{array}$
}
\]
\[
\scalebox{0.75}{
$[\partial^{1}]^{B^1, B^{0}}=\begin{array}{cc|ccccccc}
& & \textcolor{red}{AB} & \textcolor{red}{AC} & \textcolor{red}{BD} & CD & BC & CE & \textcolor{green}{DE}\\
& & (0,0) & (0,0) & (0,0) & (0,0) & (1,1) & (2,2) & (2,2)\\
\hline
A & (0,0) & 1 & 1 & 0 & 0 & 0 & 0 & 0\\
\textcolor{green}{B} & (0,0) & 1 & 0 & 1 & 0 & {\bf 1} & 0 & 0\\
\textcolor{green}{C} & (0,0) & 0 & 1 & 0 & 0 & {\bf 1} & {\bf 1} & 0\\
\textcolor{green}{D} & (0,0) & 0 & 0 & 1 & 0 & 0 & 0 & 1\\
E & (2,0) & 0 & 0 & 0 & 0 & 0 & 1 & 1
\end{array}$
}
\]
and the following local pairs: $(DE, CDE)$, $(B, AB)$, $(C, AC)$, $(D, BD)$. 
Columns and rows highlighted in red will correspond to local negative generators, while the green ones to local positive generators. 
The remaining columns and rows will correspond to global generators. 
Since they will play a crucial role in the Phase II of \multichunk, local negative and local positive elements currently appearing in the boundary of global columns have been highlighted in bold.

\paragraph{Phase II: Compression}
The second phase of the algorithm aims at removing local (positive or negative)
generators from the boundary of all global columns in the graded boundary matrices. 

Given the modified graded boundary matrix $[\partial^{n}]^{B^n, B^{n-1}}$  returned by the previous step, the compression phase works as follows (see Alg.~\ref{alg:phaseII}).
For each global column $c$, while the boundary of the column contains a generator that is local positive or local negative, the algorithm picks the local generator $c'$ with maximal index.
\begin{itemize}
  \item If $c'$ is negative, remove $c'$ from the boundary of $c$;
  \item If $c'$ is positive, denote $c''$ as the (unique) local negative $n$-column with $c'$ as local pivot and perform the column addition $c\gets c+ c''$.
\end{itemize}
This ends the description of the compression phase.
On termination, all the generators in the boundary of a global $n$-column are global.

The above process terminates for a column $c$ because the index of the maximal
local generator in the boundary of $c$ is strictly decreasing in each step.
That is clear for the case that $c'$ is local negative. If $c'$ is local positive,
then $c'$ is the generator in the boundary of $c''$ with the maximal index, so the column
addition does not introduce in the boundary of $c$ any generators with a larger index.

Note that the compression of a global column does not affect the result
on any other global column. Thus, the phase can be parallelized as well.

Running Phase II of \multichunk \, on the considered example, we obtain the following boundary matrices:
\[
\scalebox{0.75}{
$[\partial^{2}]^{B^2, B^{1}}=\begin{array}{cc|ccc}
& & ABC & BCD & \textcolor{red}{CDE}\\
& & (2,1) & (1,2) & (2,2)\\
\hline
\textcolor{red}{AB} & (0,0) & 0 & 0 & 0\\
\textcolor{red}{AC} & (0,0) & 0 & 0 & 0\\
\textcolor{red}{BD} & (0,0) & 0 & 0 & 0\\
CD & (0,0) & 0 & 1 & 1\\
BC & (1,1) & 1 & 1 & 0\\
CE & (2,2) & 0 & 0 & 1\\
\textcolor{green}{DE} & (2,2) & 0 & 0 & 1
\end{array}$
}
\]
\[
\scalebox{0.75}{
$[\partial^{1}]^{B^1, B^{0}}=\begin{array}{cc|ccccccc}
& & \textcolor{red}{AB} & \textcolor{red}{AC} & \textcolor{red}{BD} & CD & BC & CE & \textcolor{green}{DE}\\
& & (0,0) & (0,0) & (0,0) & (0,0) & (1,1) & (2,2) & (2,2)\\
\hline
A & (0,0) & 1 & 1 & 0 & 0 & 0 & 1 & 0\\
\textcolor{green}{B} & (0,0) & 1 & 0 & 1 & 0 & 0 & 0 & 0\\
\textcolor{green}{C} & (0,0) & 0 & 1 & 0 & 0 & 0 & 0 & 0\\
\textcolor{green}{D} & (0,0) & 0 & 0 & 1 & 0 & 0 & 0 & 1\\
E & (2,0) & 0 & 0 & 0 & 0 & 0 & 1 & 1
\end{array}$
}
\]

\paragraph{Phase III: Removal of local pairs}
%
In the third phase of \multichunk, we remove columns and rows from the boundary matrices.

Given the modified graded boundary matrices returned by the previous step, Phase III works as follows (see Alg.~\ref{alg:phaseIII}).
Traverse all columns, and remove all columns labeled as local (positive or negative). Similarly, traverse all rows, and remove all rows labeled as local (positive or negative).

Let us denote by $C^n$ the collection of generators of $B^n$ in correspondence with the remaining (global) columns. 
The reduced matrices obtained by performing Phase III for all dimensions $n$ can be considered as the graded boundary matrices $[\partial^{n}]^{C^n, C^{n-1}}$ representing the boundary homomorphisms $\partial^{n} : G^n \to G^{n-1}$ expressed in terms of the bases $C^n$ and $C^{n-1}$.

Running Phase III of \multichunk \, on the considered example, we obtain as output the following boundary matrices:
\[
\scalebox{0.75}{
$[\partial^{2}]^{C^2, C^{1}}=\begin{array}{cc|cc}
& & ABC & BCD \\
& & (2,1) & (1,2) \\
\hline
CD & (0,0) & 0 & 1 \\
BC & (1,1) & 1 & 1 \\
CE & (2,2) & 0 & 0
\end{array}$
}
\]
\[
\scalebox{0.75}{
$[\partial^{1}]^{C^1, C^{0}}=\begin{array}{cc|ccc}
& & CD & BC & CE \\
& & (0,0) & (1,1) & (2,2) \\
\hline
A & (0,0) & 0 & 0 & 1 \\
E & (2,0) & 0 & 0 & 1 
\end{array}$
}
\]
expressed in terms of the following bases: 
$C^0=\{A, E\}$, $C^1=\{CD$, $BC$, $CE\}$, and 
$C^2=\{ABC, BCD\}$.


\subsection{Multi-chunk compression: Correctness}\label{chunk:Correct}

We prove that the output of \multichunk \, is homotopy equivalent to the input. 
In order to do that, 
\begin{itemize}
    \item we introduce two elementary operations on chain complexes of finitely-generated free persistence modules;
    \item we prove that both elementary operations do not modify the homotopy type of the chain complex;
    \item we show that \multichunk \, can be expressed by a sequence of such elementary operations.
\end{itemize}

\paragraph{Grade-preserving column addition}
Given distinct columns $c, c'$ of a graded boundary matrix $[\partial^{n}]^{B^n, B^{n-1}}$, an operation of the form $c\gets c+ c'$ is called \emph{grade-preserving} if $\gr(c')\leq \gr(c)$. Note that such an operation maintains the property that any generator $c''$ in the boundary of $c$ satisfies $\gr(c'')\leq \gr(c)$, by transitivity of the grade.

\paragraph{Removal of local pair}
We call $(c_1,c_2)$ a \emph{local pair} if $c_1$ is a generator in $B^n$, $c_2$ is a generator in $B^{n+1}$, $\gr(c_1)=\gr(c_2)$ and $c_1$ is the local pivot of $c_2$.
We call {\em removal of the local pair} $(c_1, c_2)$ the operation $Del(c_{1},c_{2})$ which acts on the graded boundary matrices of a chain complex $F^{\bullet}$ as follows.
\begin{itemize}
\item For every $(n+1)$-column $c$, replace its boundary $\partial^{n+1}(c)$ with $\partial^{n+1}(c) + \mu \partial^{n+1}(c_2)$, where $\mu$ is the coefficient of $c_1$ in $\partial^{n+1}(c)$.
In particular, after this operation, $c_1$ disappeared from the boundary of any $(n+1)$-columns. Moreover, notice that this operation is a grade-preserving column addition because the pair $(c_1,c_2)$ is local,
that is, $\gr(c_1)=\gr(c_2)$.
\item For every $(n+2)$-column $c$, update its boundary by setting the coefficient of $c_{2}$ in $\partial^{n+2}(c)$ to $0$. In terms of matrices, this corresponds to removing the row corresponding to $c_2$ from $[\partial^{n}]^{B^{n+1}, B^{n}}$.
\item Delete the generators $c_1$ and $c_{2}$ from $B^{n}$ and $B^{n+1}$, respectively.
\end{itemize}

\paragraph{Grade-preserving column addition preserves homotopy type}
Given two $n$-columns $c_1, c_2$ such that $\gr(c_1)\leq \gr(c_2)$
, the algorithm we propose allows for adding 
the boundary of column $c_1$ to the boundary of column $c_2$. In this section, we formalize that in terms of modifications of chain complexes.

Given a chain complex $F^\bullet=(F^l, \partial^l)$ of finitely generated, 
free persistence modules, and distinct generators $c_1, c_2$ in the basis $B^n$ 
with $\gr(c_1)\leq \gr(c_2)$, we define 
$\bar{F}^\bullet=(\bar{F}^l, \bar{\partial}^l)$ by setting:
\begin{itemize}
  \item $\bar{F}^l=F^l$,
  \item for any $c\in B^l$, \[\bar{\partial}^l(c)=\begin{cases}
  \partial^n(c) +  \langle c, c_2 \rangle \partial^n (c_1) & \text{ if } l=n,\\
  \partial^{n+1}(c) + \langle \partial^{n+1} (c), c_2 \rangle c_1 & \text{ if } l=n+1,\\
  \partial^l(c) & \text{ otherwise.}
  \end{cases}\]
\end{itemize}
Let us define chain maps $f^\bullet: F^\bullet \rightarrow \bar{F}^\bullet$, $g^\bullet: \bar{F}^\bullet \rightarrow F^\bullet$ as follows:
\begin{itemize}
  \item for any $c\in B^l$, \[f^l(c)=\begin{cases}
  c + \langle c, c_2 \rangle c_1 & \text{ if } l=n,\\
  c & \text{ otherwise;}
  \end{cases}\]
  \item for any $c\in B^l$, \[g^l(c)=\begin{cases}
  c +  \langle c, c_2 \rangle c_1 & \text{ if } l=n,\\
  c & \text{ otherwise.}
  \end{cases}\]
\end{itemize}
Then, by a routine calculation we obtain the following:

\begin{proposition}
The chain complexes of free persistence modules $F^\bullet$ and $\bar{F}^\bullet$ are isomorphic via the chain map $f^\bullet$ and its inverse $g^\bullet$.
\end{proposition}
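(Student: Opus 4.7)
The plan is to verify the three things hidden inside the word ``isomorphic via'': that $f^\bullet$ and $g^\bullet$ are well-defined homomorphisms of persistence modules (i.e., grade-respecting), that they are chain maps, and that they are mutually inverse. Since everything is defined on basis elements and then extended, each check reduces to a finite computation on the elements of $B^l$. Because the description of \multichunk{} fixes $k=\bbZ_2$, the formulas for $f^l$ and $g^l$ coincide on the nose, so mutual inverseness will collapse to showing $f^\bullet\circ f^\bullet = \mathrm{id}$.

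First I would check that $f^n$ and $g^n$ are actually morphisms of persistence modules. The only basis element on which $f^n$ acts non-trivially is $c_2$, where $f^n(c_2)=c_2+c_1$. Both summands must lie in $F^n_{\gr(c_2)}$; this is precisely where the hypothesis $\gr(c_1)\leq\gr(c_2)$ is used, since it lets us transport $c_1$ along $F^n_{\gr(c_1),\gr(c_2)}$ to grade $\gr(c_2)$. A parallel check works for the modified boundary $\bar\partial^n$, where the added summand $\langle c,c_2\rangle\partial^n(c_1)$ lives at grade $\leq \gr(c)$ only because transitivity of the partial order pushes $\gr(c_1)$ past $\gr(c_2)\leq \gr(c)$.

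Next I would verify the chain map identity, which is only non-trivial in degrees $n$ and $n+1$. In degree $n+1$, $\bar\partial^{n+1}(f^{n+1}(c))=\bar\partial^{n+1}(c)=\partial^{n+1}(c)+\langle\partial^{n+1}(c),c_2\rangle c_1$, while $f^n(\partial^{n+1}(c))$ gives exactly the same thing by definition of $f^n$; so the two sides agree. In degree $n$ one expands $\bar\partial^n(f^n(c)) = \partial^n(c) + \langle c,c_2\rangle\partial^n(c_1) + \langle c,c_2\rangle\bigl(\partial^n(c_1)+\langle c_1,c_2\rangle\partial^n(c_1)\bigr)$; the term with factor $\langle c_1,c_2\rangle$ vanishes because $c_1\neq c_2$ are distinct basis elements, and then the two remaining copies of $\langle c,c_2\rangle\partial^n(c_1)$ cancel in $\bbZ_2$, leaving $\partial^n(c)=f^{n-1}(\partial^n(c))$.

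Finally, mutual inverseness: since $f^l=g^l$, it is enough to show $f^l\circ f^l=\mathrm{id}$. This is immediate outside degree $n$, and in degree $n$ one computes $f^n(f^n(c)) = f^n(c)+\langle f^n(c),c_2\rangle c_1 = c+\langle c,c_2\rangle c_1 + \bigl(\langle c,c_2\rangle+\langle c,c_2\rangle\langle c_1,c_2\rangle\bigr)c_1$, and the distinctness $c_1\neq c_2$ kills $\langle c_1,c_2\rangle$, so the coefficient of $c_1$ is $2\langle c,c_2\rangle=0$ in $\bbZ_2$. The only real obstacle is notational bookkeeping; the mathematical content is entirely concentrated in the two facts $\gr(c_1)\leq\gr(c_2)$ (for grade-preservation) and $c_1\neq c_2$ (so that $\langle c_1,c_2\rangle=0$, which is what makes the cancellations in the chain map and involutivity computations work).
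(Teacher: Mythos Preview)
Your proposal is correct and carries out precisely the routine verification the paper alludes to; the paper itself simply states ``by a routine calculation we obtain the following'' and gives no further details. Your checks of grade-preservation (using $\gr(c_1)\le\gr(c_2)$), the chain map identities in degrees $n$ and $n+1$, and the involutivity $f^n\circ f^n=\mathrm{id}$ over $\bbZ_2$ (using $\langle c_1,c_2\rangle=0$) are exactly the computations the paper is suppressing.
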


\paragraph{Removal of local pair preserves homotopy type}
Given a local pair $(c_1, c_2)$ of columns, the algorithm we propose allows for deleting them from the boundary matrix. In this section, we formalize that in terms of modifications of chain complexes.

Given a chain complex $F^\bullet=(F^l, \partial^l)$ of finitely generated, 
free persistence modules, and generators $c_1 \in B^n$ and $c_2 \in B^{n+1}$ 
such that $\langle \partial^{n+1}(c_2), c_1 \rangle \neq 0$ and $\gr(c_1)=\gr(c_2)$, 
we define 
$\bar{F}^\bullet=(\bar{F}^l, \bar{\partial}^l)$ by setting:
\begin{itemize}
  \item $\bar{F}^l$ as the free persistence module with basis \[\bar{B}^l=\begin{cases}
  B^n - \{c_1\} & \text{ if } l=n,\\
  B^{n+1} - \{c_2\} & \text{ if } l=n+1,\\
  B^l & \text{ otherwise;}
  \end{cases}\]
  \item for any $c\in \bar{B}^l$, \[\bar{\partial}^l(c)=\begin{cases}
  \partial^{n+1}(c) + \langle \partial^{n+1}(c), c_1 \rangle \partial^{n+1}(c_2) & \text{ if } l=n+1,\\
  \partial^{n+2}(c) + \langle \partial^{n+2} (c), c_2 \rangle c_2 & \text{ if } l=n+2,\\
  \partial^l(c) & \text{ otherwise.}
  \end{cases}\]
\end{itemize}
Let us define chain maps $r^\bullet: F^\bullet \rightarrow \bar{F}^\bullet$, $s^\bullet: \bar{F}^\bullet \rightarrow F^\bullet$ as follows:
\begin{itemize}
  \item for any $c\in B^l$, \[r^l(c)=\begin{cases}
  c + \langle c, c_1 \rangle \partial^{n+1}(c_2) & \text{ if } l=n,\\
  c + \langle c, c_2 \rangle c_2 & \text{ if } l=n+1,\\
  c & \text{ otherwise;}
  \end{cases}\]
  \item for any $c\in \bar{B}^l$, \[s^l(c)=\begin{cases}
  c + \langle \partial^{n+1}(c), c_1 \rangle c_2 & \text{ if } l=n+1,\\
  c & \text{ otherwise;}
  \end{cases}\]
\end{itemize}
Using these maps, we prove the following result (see~\ref{sec:app_details_1} for a detailed proof):

\begin{theorem} \label{local-pair-homotopy-equivalence}
The chain complexes of free persistence modules $F$ and $\bar{F}$ are homotopy equivalent.
\end{theorem}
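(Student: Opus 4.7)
The plan is to show that the explicitly given chain maps $r^\bullet$ and $s^\bullet$ form a homotopy equivalence. Concretely, I will verify that (i) $r^\bullet$ and $s^\bullet$ are well-defined chain maps, (ii) $r^\bullet \circ s^\bullet = \mathrm{id}_{\bar{F}^\bullet}$ exactly, and (iii) $s^\bullet \circ r^\bullet$ is chain-homotopic to $\mathrm{id}_{F^\bullet}$ via an explicit grading-preserving homotopy. Since all four maps $r^\bullet, s^\bullet, \bar{\partial}^\bullet - \partial^\bullet$ act as the identity (or zero) outside dimensions $\{n, n+1, n+2\}$, the entire verification reduces to a finite bookkeeping exercise in these dimensions.

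First I would check that $r^n$ really lands in $\bar{F}^n$. The only non-trivial case is the basis element $c_1$, where $r^n(c_1) = c_1 + \partial^{n+1}(c_2)$. The coefficient of $c_1$ in this expression is $1 + \langle \partial^{n+1}(c_2), c_1 \rangle = 1 + 1 = 0$ in $\bbZ_2$, so $r^n(c_1)$ lies in the span of $\bar{B}^n = B^n - \{c_1\}$. Then I would verify the chain map identity $\bar{\partial}^l \circ r^l = r^{l-1} \circ \partial^l$ for $l = n+1$ and $l = n+2$ by evaluating on basis elements; the case $l = n+1, c \neq c_2$ follows by expanding $r^n$ linearly on $\partial^{n+1}(c)$ and matching the extra $\langle \partial^{n+1}(c), c_1 \rangle \partial^{n+1}(c_2)$ contribution with $\bar{\partial}^{n+1}(c)$, and the case $c = c_2$ uses $\partial^n \circ \partial^{n+1} = 0$ together with the $\mathbb{Z}_2$-cancellation of the $c_1$-terms. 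The chain map property for $s^\bullet$ is symmetric and similar.

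To establish $r^\bullet \circ s^\bullet = \mathrm{id}_{\bar{F}^\bullet}$, note that for $c \in \bar{B}^{n+1}$ the map $s^{n+1}$ adds a $c_2$-summand, which is then annihilated by $r^{n+1}(c_2) = c_2 + c_2 = 0$; the remaining dimensions are trivial. For the other direction, I will define the homotopy $h^n : F^n \to F^{n+1}$ by $h^n(c) = \langle c, c_1 \rangle c_2$ (so $h^n(c_1) = c_2$ and $h^n$ vanishes on the rest of $B^n$), and set $h^l = 0$ for $l \neq n$; note that this is grading-compatible because $\gr(c_1) = \gr(c_2)$. Then I will verify $\partial^{l+1} \circ h^l + h^{l-1} \circ \partial^l = s^l \circ r^l - \mathrm{id}$ dimension by dimension. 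The two non-trivial cases are $c = c_1$ in dimension $n$, where the left side becomes $\partial^{n+1}(c_2)$, matching $(s^n \circ r^n - \mathrm{id})(c_1) = \partial^{n+1}(c_2)$ after the $\mathbb{Z}_2$-cancellation of $c_1$-terms; and $c \in B^{n+1}$ in dimension $n+1$, where both sides reduce to $\langle \partial^{n+1}(c), c_1 \rangle c_2$ (including the subcase $c = c_2$, where $\langle \partial^{n+1}(c_2), c_1 \rangle = 1$).

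The main obstacle is not any single hard step but rather the careful case analysis of chain map and chain-homotopy identities on basis elements, with the necessity of tracking $c_1$- and $c_2$-coefficients under arithmetic in $\mathbb{Z}_2$; the key structural reason everything works is that the definitions of $r^\bullet, s^\bullet$, and $h^\bullet$ are engineered so that every stray $c_1$- or $c_2$-summand either cancels modulo $2$ or is absorbed by the modified boundary $\bar{\partial}^\bullet$, which is why the claim is plausible despite the apparent asymmetry between $r^\bullet$ and $s^\bullet$.
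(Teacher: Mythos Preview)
Your proposal is correct and follows essentially the same approach as the paper: the paper's proof (in the appendix) defines exactly your homotopy $\phi^l(c) = \langle c, c_1\rangle c_2$ for $l=n$ and zero otherwise, and then asserts that $r^l s^l = \mathrm{id}_{\bar F^l}$ and that $s^l r^l$ is chain-homotopic to $\mathrm{id}_{F^l}$ via $\phi^\bullet$, leaving the verifications as ``straightforward to check.'' Your write-up simply supplies those verifications in more detail, including the well-definedness of $r^n$ and the case analysis on $c_1$ and $c_2$.
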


\paragraph{Multi-chunk algorithm preserves homotopy type}

By combining the previous results, we are now ready to prove the correctness of \multichunk.  

\begin{theorem}
Let $F^{\bullet}$ be a chain complex of finitely-generated free persistence modules together with an ordered basis for each $F^{n}$ and let $G^{\bullet}$ be the chain complex of finitely-generated free persistence modules returned by performing 
\textup{\texttt{multi-chunk}} on $F^{\bullet}$. 
Then, $F^{\bullet}$ and $G^{\bullet}$ are homotopy equivalent. 
\end{theorem}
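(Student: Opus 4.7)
The plan is to express the three phases of \multichunk as a sequence of the two elementary operations introduced earlier in this section — grade-preserving column additions and removals of local pairs — and invoke the preceding proposition together with \cref{local-pair-homotopy-equivalence}. For Phase I, every addition $c \gets c + c'$ occurs between columns of the same chunk, so $\gr(c')=\gr(c)$; such additions are grade-preserving, and by the preceding proposition each realizes an isomorphism of chain complexes. Composing across Phase I yields an isomorphism between $F^{\bullet}$ and the chain complex $F^{\bullet}_I$ obtained at the end of Phase I, with its labelling of generators as global, local positive, or local negative.

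For Phase II, each column addition $c \gets c + c''$ on a global column $c$ is again grade-preserving: the generator $c'$ lies in $\partial(c)$ so $\gr(c')\leq \gr(c)$, and $c''$ has $c'$ as its local pivot so $\gr(c'')=\gr(c')\leq \gr(c)$. Each such addition therefore yields an isomorphic chain complex. The apparently non-algebraic step of zeroing the coefficient of a local negative row $c'$ in $\partial(c)$ is not itself a column operation; I would interpret it as the row-deletion portion of the removal-of-local-pair operation for the pair $(c''',c')\in B^{n-1}\times B^n$ in which $c'$ participates as a column of $[\partial^n]$. Phase II is therefore a combination of grade-preserving column additions (giving isomorphisms) together with partial applications of forthcoming local pair removals to global columns.

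Phase III excises the local rows and columns, which I realize as a sequence of full removal-of-local-pair operations in a suitable order. \cref{local-pair-homotopy-equivalence} guarantees that each such removal preserves homotopy type. The decisive point is that, after Phase II, the boundary of every global column contains no local generators, so the coefficient $\mu$ that appears in the removal operation vanishes on every global column. Hence the global part of the chain complex is left untouched while the local portion is dismantled one pair at a time. Concatenating all three phases, $G^{\bullet}$ arises from $F^{\bullet}$ by an isomorphism followed by a sequence of homotopy equivalences, establishing the claim.

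The main obstacle is verifying that the local pair removals in Phase III can be carried out in an order such that each pair still satisfies the nontriviality condition $\langle \partial^{n+1}(c_2), c_1 \rangle \neq 0$ at its moment of removal. A natural choice — processing dimensions from the top down, and within each dimension removing local negative columns in decreasing index order — should preserve this property, since the modifications induced on remaining local negative columns by earlier removals cannot destroy the distinguishing pivots made unique by Phase I's reduction. The detailed bookkeeping for this point, together with verifying that the row-zeroing interpretation of Phase II agrees with the cumulative effect of the subsequent local pair removals on global columns, is where a complete proof would spend the bulk of its effort.
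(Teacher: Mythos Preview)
Your overall strategy---express \multichunk\ as a sequence of grade-preserving column additions and local-pair removals---is exactly the paper's, and your treatment of Phase I and the column-addition portion of Phase II matches it.

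The gap is in how you handle the row-zeroing in Phase II. Interpreting it as a ``partial application'' of a forthcoming local-pair removal is vague: you never say what it means to perform the row-deletion portion of a removal now and the rest later, nor why splitting the operation across phases still yields a homotopy equivalence. (There is also an indexing slip: a local negative generator $c'$ in the boundary of an $n$-column lies in $B^{n-1}$, so the relevant pair is in $B^{n-2}\times B^{n-1}$, not $B^{n-1}\times B^n$.) The paper sidesteps this entirely by introducing a \emph{modified} \multichunk\ in which Phase II simply ignores local negative entries, and the modified Phase III performs the full local-pair removal for each local negative column. This modified algorithm is transparently a sequence of the two elementary operations. The paper then argues separately that the modified and original algorithms produce identical global columns: the lingering local negative entries in a global column $c^\ast$ get stripped by the row-deletion part of the removals in modified Phase III, and no column addition touches $c^\ast$ there because its local positives were already eliminated in modified Phase II. Decoupling ``the modified algorithm is a chain of elementary operations'' from ``the two algorithms agree on output'' is the device you are missing, and it is what turns your sketch into a proof without any notion of partial application.

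Your concern about the order of removals preserving $\langle \partial(c_2),c_1\rangle\neq 0$ is legitimate and is left implicit in the paper as well; it is easily resolved by noting that removing $(c_1,c_2)$ adds $\partial(c_2)$, whose maximal index is $c_1$, only to columns that contain $c_1$ strictly below their own pivot (distinct local pivots being guaranteed by Phase I), so no other local pivot is disturbed.
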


\begin{proof}
In order to prove the theorem we exhibit that \multichunk \,  can be expressed by a sequence of grade-preserving column additions and removals of local pairs.
Because every column addition in Phase I is between columns of the same
grade, all column additions are grade-preserving.
Hence, after Phase I, the chain complex is equivalent to the input.

In Phase II, note that all column additions performed are grade-preserving.
Indeed, if $c'$ is in the boundary of column $c$, then $\gr(c')\leq \gr(c)$ holds.
If $c'$ is local positive, it triggers a column addition
of the form $c\gets c+ c''$ with
its local negative counterpart $c''$. Since $\gr(c')=\gr(c'')$,
$\gr(c'')\leq \gr(c)$ as well.

A further manipulation in Phase II is the removal of local negative columns from the boundary of global columns.
These removals cannot be directly expressed in terms of the two elementary operations
from above. Instead, we define a slight variation of our algorithm:
in Phase II, when we encounter a local negative $c'$, we do nothing.
In other words, the compression only removes the local positive generators
from the boundary $c$, and keeps local negative and global generators.
In Phase III, instead of removing local columns, we perform a removal
of a local pair $(c_1,c_2)$ whenever we encounter a local negative column $c_2$
with local pivot $c_1$. We call that algorithm \texttt{modified multi-chunk}.
Note that this modified algorithm is a sequence of grade-preserving
column additions, followed by a sequence of local pair removals, and thus
produces a chain complex that is equivalent to the input $F^{\bullet}$.

We argue next that \multichunk \,  and \texttt{modified multi-chunk}
yield the same output. Since both versions eventually remove all local
columns, it suffices to show that they yield the same global columns.
Fix an index of a global column, and let $c$ denote the column of that index
returned by the original chunk algorithm. Let $c^\ast$ denote the
column of the same index produced by the modified algorithm after
the modified Phase II. The difference of $c^\ast$ and $c$ lies in the
presence of local negative generators in the boundary of $c^\ast$ which have been removed in $c$.
The modified Phase III affects $c^\ast$ in the following way:
when a local pair $(c_1,c_2)$ is removed, the local negative
$c_2$ is, if it is present, removed from the boundary of $c^\ast$. There is no column addition
during the modified Phase III involving $c^\ast$ because all local positive
columns have been eliminated. Hence, the effect of the modified Phase III
on $c^\ast$ is that all local negative columns are removed from its boundary which turns
$c^\ast$ to be equal to $c$ at the end of the algorithm.
Hence, the output of both algorithms is the same, proving the theorem.
\end{proof}

\subsection{Multi-chunk compression: Complexity}\label{chunk:Complexity}

In order to properly express the time and the space complexity of the proposed algorithm, let us introduce the following parameters. Given input $F^{\bullet}$, we denote as $n$ the number of generators of $F^{\bullet}$, as $m$ the number of chunks, as $\ell$ the maximal size of a chunk, and as $g$ the number of global columns. Moreover, we assume the maximal size of the support of the boundary of the generators of $F^{\bullet}$ as a constant. The latter condition is always ensured for bi-filtered simplicial complexes of fixed dimension.

\begin{theorem}
\label{thm:multi-chunk-complexity}
The algorithm \textup{\multichunk} has time complexity $O(m\ell^3+g\ell n)$ and space complexity $O(n\ell + g^2)$.
\end{theorem}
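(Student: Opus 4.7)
The plan is to analyze each of the three phases of \multichunk \, separately, in both time and space, and then combine the bounds.

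For Phase I, I would observe that a column addition requires both columns to share the same local pivot, which forces them to have the same grade and thus to lie in the same chunk. So Phase I decomposes, chunk by chunk, into a standard pivot-based Gaussian reduction on a submatrix of size at most $\ell \times \ell$. Each column inside the chunk receives at most $\ell$ additions, and each addition is $O(\ell)$ in the sparse representation, giving $O(\ell^3)$ per chunk and $O(m\ell^3)$ overall. A by-product of this analysis is that after Phase I every column has at most $O(\ell)$ non-zero entries, since the additions inside a chunk only touch rows indexed by that chunk together with the constant-support entries in lower-graded rows brought in by the reductions.

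For Phase II, I would invoke the strict-decrease argument already set up in the algorithm description: for each global column $c$, the maximum index of a local generator in $\partial(c)$ strictly decreases at every step, so at most $n$ iterations are performed per column. Each iteration costs $O(\ell)$, since it either removes a single entry or performs a column addition $c \gets c + c''$ with a local negative column $c''$ of size $O(\ell)$. Summing over the $g$ global columns yields $O(g\ell n)$. Phase III is then a linear traversal removing rows and columns marked local, and is dominated by the bounds already obtained.

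For space, after Phase I the matrices occupy $O(n\ell)$ storage, since there are $n$ columns of size $O(\ell)$ each. When Phase II finishes processing a global column only global generators remain, contributing $O(g^2)$ in total across all $g$ global columns; the still-living local columns continue to use $O(n\ell)$. The peak is therefore $O(n\ell + g^2)$. The main obstacle I foresee is justifying that each Phase II iteration runs in $O(\ell)$ rather than in the possibly larger current length of $c$: since $c$ may transiently carry a mix of global, local positive, and local negative entries, one must store its boundary in a sparse container that supports efficient lookup of the maximum local entry and that allows a single addition of $c''$ to touch only $O(\ell)$ slots, so that neither the time nor the space bound is inflated during these transient states.
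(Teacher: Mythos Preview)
Your proposal is correct and follows essentially the same approach as the paper, which itself defers the time analysis to the one-parameter chunk algorithm and gives the space argument in the same way you do (Phase~I yields $O(\ell)$-sized columns for a total of $O(n\ell)$, and each compressed global column ends with at most $g$ entries for $O(g^2)$). You in fact supply more detail than the paper, including the explicit data-structure requirement for Phase~II that makes each iteration cost $O(\ell)$; this matches the implicit reliance on the chunk paper's analysis.
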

\begin{proof}
Due to the similarity between the two algorithms, the analysis of complexity of the proposed algorithm is analogous to the first two steps  of the one-parameter chunk algorithm~\cite{chunk}.

On the space complexity, during the Phase I, the generators in the boundary of any column can be at most $O(\ell)$. So, $O(n\ell)$ is a bound on the accumulated size of all columns
after Phase I.
During Phase II, the boundary of any global column can consist of up to $n$ generators, but reduces to $g$ generators at the end of the compression of the column
because all local entries have been removed. Hence, the final chain complex has at most $g$ entries in each of its $g$ columns, and requires $O(g^2)$ space.
Hence, the total space complexity is $O(n\ell+n+g^2)$, where the second summand is redundant.\footnote{We remark that this bound only holds for the sequential version of the
algorithm. In a parallelized version, it can happen that several compressed columns achieve a size of $O(n)$ at the same time.}
\end{proof}

We remark that the first term in the bound of Theorem~\ref{thm:multi-chunk-complexity} can actually be improved to $\min\{m \ell^3,n^3\}$ because in the worst
case, the local reduction procedure (whose complexity is counted by this term)
cannot perform more than $O(n^2)$ column operations, each of them costing
at most $O(n)$ time.

\subsection{Multi-chunk compression: Optimality}\label{chunk:Optimal}

Let $\bar{F}^{\bullet}$ be a chain complex of finitely generated free persistence modules. 
Let $\bfp = (p_x, p_y) \in \bbZ^2$, and define the vector space $\bar{F}^n_{<\bfp}$ 
to be the colimit of the restriction 
$\bar{F}^n |_{\bbZ^2_{<\bfp}} : \bbZ^2_{<\bfp} \to \Vect$, 
where $\bbZ^2_{<\bfp} = \{ \bfz \in \bbZ^2 : \bfz < \bfp \}$. 
Using the induced boundary homomorphisms, we obtain a chain complex $\bar{F}^{\bullet}_{<\bfp}$ 
of vector spaces. 
Equivalently, if we write 
$\bfp_0 = (p_x - 1, p_y - 1), 
\bfp_1 = (p_x - 1, p_y)$, and 
$\bfp_2 = (p_x, p_y - 1)$, 
the vector space $\bar{F}^n_{<\bfp}$ can be defined by the short exact sequence:
\[
0 \to \bar{F}_{\bfp_0}^n \to 
\bar{F}_{\bfp_1}^n \oplus \bar{F}_{\bfp_2}^n \to 
\bar{F}_{<\bfp}^n \to 0 \; .
\]
Moreover, let $\eta_{\bfp}^n$ be the homology map in dimension $n$ induced by the inclusion of $\bar{F}_{<\bfp}^\bullet$ into $\bar{F}_{\bfp}^\bullet$.
We define
\[\delta_{\bfp}^n(\bar{F}^{\bullet}):=\dim\,\Ker \, \eta_{\bfp}^{n-1} + \dim\,\Coker \, \eta_{\bfp}^n,\]
and
\[\gamma_{\bfp}^n(\bar{F}^{\bullet}):=\dim\,\bar{F}_{\bfp}^n - \dim\,\bar{F}_{<\bfp}^n.\]

\begin{theorem}\label{thm:optimality}
Let $G^{\bullet}$ be the output of \textup{\multichunk} applied to the input $F^{\bullet}$. Then $\delta_{\bfp}^n(F^{\bullet})=\gamma_{\bfp}^n(G^{\bullet})$.
\end{theorem}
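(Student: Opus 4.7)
The plan is to show the quantity $\delta_\bfp^n(\bar F^\bullet)$ admits a clean intrinsic interpretation in terms of the quotient chain complex $K_\bfp^\bullet := \bar F^\bullet_{\bfp} / \bar F^\bullet_{<\bfp}$, that it is invariant under homotopy equivalence, and that for the specific $G^\bullet$ produced by \multichunk\ the induced differentials on $K_\bfp^\bullet$ vanish, so that $\delta_\bfp^n(G^\bullet)=\dim K_\bfp^n=\gamma_\bfp^n(G^\bullet)$. Combining with the already established homotopy equivalence $F^\bullet \simeq G^\bullet$ then yields the theorem.

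First I would apply the short exact sequence of chain complexes
\[
0 \to \bar F^\bullet_{<\bfp} \to \bar F^\bullet_{\bfp} \to K_\bfp^\bullet \to 0
\]
and pass to the associated long exact sequence in homology. The connecting homomorphism gives, for each $n$, a short exact sequence
\[
0 \to \Coker\,\eta_\bfp^n \to H^n(K_\bfp^\bullet) \to \Ker\,\eta_\bfp^{n-1} \to 0,
\]
so $\dim H^n(K_\bfp^\bullet)=\delta_\bfp^n(\bar F^\bullet)$. This identity is the conceptual pivot of the proof and makes $\delta_\bfp^n$ visibly an invariant of $\bar F^\bullet$ up to a suitable equivalence.

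Next I would establish invariance of $\delta_\bfp^n$ under homotopy equivalence. Because evaluation of a persistence module at the fixed grade $\bfp$ is exact, it sends quasi-isomorphisms to quasi-isomorphisms, so a homotopy equivalence $F^\bullet \simeq G^\bullet$ induces quasi-isomorphisms $F^\bullet_\bfp \simeq G^\bullet_\bfp$ and, via the defining short exact sequence for $F^\bullet_{<\bfp}$ and the five lemma applied to the associated long exact sequences at grades $\bfp_0,\bfp_1,\bfp_2$, also $F^\bullet_{<\bfp}\simeq G^\bullet_{<\bfp}$. The square of $\eta$-maps for $F^\bullet$ and $G^\bullet$ then commutes with vertical isomorphisms, so kernels and cokernels of $\eta^n$ agree. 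This yields $\delta_\bfp^n(F^\bullet)=\delta_\bfp^n(G^\bullet)$.

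It remains to compute $\delta_\bfp^n(G^\bullet)=\dim H^n(K_\bfp^\bullet)$ for the output $G^\bullet$. The key structural property I would verify is that for every global column $c$ of the final graded boundary matrix at grade $\bfp$, the boundary $\partial^n(c)$ contains no generator whose grade is equal to $\bfp$. Indeed, after Phase I no global column has a local pivot, and since rows are ordered compatibly with grade, rows at grade exactly $\bfp$ come strictly after any row of grade $<\bfp$; thus the absence of a local pivot forces all row entries at grade $\bfp$ to be zero. Moreover, any non-zero entry of a column of grade $\bfp$ must lie in a row of grade $\le \bfp$ (the boundary lives in $G^{n-1}_\bfp$), so all non-zero entries land at grades strictly less than $\bfp$. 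Since $K_\bfp^n=G^n_\bfp/G^n_{<\bfp}$ has as basis the generators at grade exactly $\bfp$ and the induced differential takes each such generator to the class of $\partial^n(c)$ in $G^{n-1}_\bfp/G^{n-1}_{<\bfp}$, which is zero, the complex $K_\bfp^\bullet$ has zero differentials and therefore $\dim H^n(K_\bfp^\bullet)=\dim K_\bfp^n=\gamma_\bfp^n(G^\bullet)$.

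The main obstacle I expect is the structural verification in the last step: namely, nailing down that the output of \multichunk\ has no same-grade entries in any column. This relies on the combination of the row-ordering convention, the definition of ``local pivot'' (largest index), and the observation that the sole columns surviving Phase III are those that were global already after Phase I, so they never had an entry at grade equal to the column's grade in the first place. Once that property is stated cleanly, the two homological ingredients (the long exact sequence identification and the homotopy invariance via evaluation and the five lemma) glue together routinely to give $\delta_\bfp^n(F^\bullet)=\delta_\bfp^n(G^\bullet)=\gamma_\bfp^n(G^\bullet)$.
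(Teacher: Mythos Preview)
Your proof is correct, and it takes a genuinely different route from the paper's.

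The paper argues directly at the level of the reduction algorithm: it applies its Lemma~\ref{lemma:matrixReduction} to the pair $A^\bullet=F^\bullet_{<\bfp}\subset B^\bullet=F^\bullet_{\bfp}$ and tracks, column by column, which columns of grade $\bfp$ end up contributing to $\Ker\eta_\bfp^{n-1}$ versus $\Coker\eta_\bfp^n$. Local columns are shown to pair among themselves and hence contribute to neither, while every global column contributes to exactly one of the two, so the count of global columns at $\bfp$ equals $\delta_\bfp^n(F^\bullet)$. Since global columns are precisely the generators of $G^\bullet$ at $\bfp$, this gives the result.

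Your argument is more homological: you identify $\delta_\bfp^n(\bar F^\bullet)=\dim H^n(\bar F^\bullet_\bfp/\bar F^\bullet_{<\bfp})$ via the long exact sequence, transport $\delta$ along the homotopy equivalence $F^\bullet\simeq G^\bullet$ (this step is exactly the Mayer--Vietoris plus $5$-lemma argument the paper deploys separately in its proof of Theorem~\ref{lemma:leq}), and then observe that in $G^\bullet$ every surviving column at grade $\bfp$ has boundary supported in strictly smaller grades, so the quotient complex has zero differential and $\delta_\bfp^n(G^\bullet)=\gamma_\bfp^n(G^\bullet)$. The structural claim you single out as the main obstacle is indeed the crux, and your justification is correct: absence of a local pivot plus the grade-compatible row ordering forces all nonzero entries to lie at grades $<\bfp$, and Phase~II only adds columns of strictly smaller grade, preserving this property.

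What each approach buys: the paper's proof is self-contained for Theorem~\ref{thm:optimality} and makes the algorithmic interpretation of global columns explicit, at the cost of a somewhat delicate invariance argument about pivots under further reduction. Your approach is cleaner conceptually and, as a bonus, immediately yields the inequality $\delta_\bfp^n\leq\gamma_\bfp^n$ for \emph{any} free chain complex (since $\dim H^n(K_\bfp^\bullet)\leq\dim K_\bfp^n$), which the paper proves separately as equation~(\ref{eq:ker-coker}). The price is that you front-load the quasi-isomorphism invariance of $\delta$, borrowing the machinery the paper only introduces for Theorem~\ref{lemma:leq}.
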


The full proof is given in~\ref{sec:app_details_1}
and can be summarized as
follows. Every global column with grade $\bfp$ either destroys a homology class of $H(F^{\bullet}_{<\bfp})$,
or it creates a new homology class in $H(F^{\bullet}_{\bfp})$, which is not destroyed by any other column of grade $\bfp$. Hence, each global column contributes a generator to $\Ker \, \iota_{\bfp}^{n-1}$ or to $\Coker \, \iota_{\bfp}^n$, where $\iota_{\bfp}^l$ is the map between the $l^{th}$ homology spaces induced by the inclusion of $F_{<\bfp}^\bullet$ into $F_{\bfp}^\bullet$.
Local columns do not contribute to either of these two spaces.
The result follows from the fact that the number of global columns
at grade $\bfp$ is precisely the number of generators added at $\bfp$ to $G^{\bullet}$.

The next statement shows that our construction is optimal
in the sense that any chain complex of free persistence modules $\bar{F}^{\bullet}$ that is quasi-isomorphic to $F^{\bullet}$ must have at least as many generators as $G^{\bullet}$.

\begin{theorem}\label{lemma:leq}
Any chain complex of free persistence modules $\bar{F}^{\bullet}$ quasi-isomorphic to $F^{\bullet}$ has to add at least $\delta_{\bfp}^n(F^{\bullet})$ $n$-generators at grade $\bfp$. I.e., $\delta_{\bfp}^n(F^{\bullet})\leq \gamma_{\bfp}^n(\bar{F}^{\bullet})$.
\end{theorem}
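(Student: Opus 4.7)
The plan is to prove two statements and then combine them: (i) $\delta_{\bfp}^n$ is invariant under quasi-isomorphism, so that $\delta_{\bfp}^n(F^{\bullet}) = \delta_{\bfp}^n(\bar{F}^{\bullet})$; and (ii) for \emph{any} chain complex $\bar{F}^{\bullet}$ of free persistence modules one has $\delta_{\bfp}^n(\bar{F}^{\bullet}) \leq \gamma_{\bfp}^n(\bar{F}^{\bullet})$. Together these yield the desired inequality.

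For (ii), the key observation is that freeness makes the canonical map $\bar{F}^n_{<\bfp} \to \bar{F}^n_{\bfp}$ an inclusion. Writing $\bar{F}^n \iso \oplus_{\mathbf{u}} E^{\mathbf{u}}$, the summand $E^{\mathbf{u}}$ contributes a copy of $k$ to $\bar{F}^n_{<\bfp}$ exactly when $\mathbf{u} < \bfp$ and to $\bar{F}^n_{\bfp}$ exactly when $\mathbf{u} \leq \bfp$, so the inclusion is obvious and its cokernel $Q^n$ has dimension equal to the number of generators of $\bar{F}^n$ at grade $\bfp$, i.e., $\dim Q^n = \gamma_{\bfp}^n(\bar{F}^{\bullet})$. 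Hence there is a short exact sequence of chain complexes of vector spaces
\[ 0 \to \bar{F}^\bullet_{<\bfp} \to \bar{F}^\bullet_{\bfp} \to Q^\bullet \to 0, \]
and the associated long exact sequence in homology contains the exact segment
\[ 0 \to \Coker(\eta_{\bfp}^n) \to H^n(Q^\bullet) \to \Ker(\eta_{\bfp}^{n-1}) \to 0, \]
which yields $\delta_{\bfp}^n(\bar{F}^{\bullet}) = \dim H^n(Q^\bullet) \leq \dim Q^n = \gamma_{\bfp}^n(\bar{F}^{\bullet})$.

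For (i), a quasi-isomorphism $f : F^{\bullet} \to \bar{F}^{\bullet}$ induces isomorphisms $H^n(F^\bullet_{\bfz}) \to H^n(\bar{F}^\bullet_{\bfz})$ at every grade $\bfz$, because homology of persistence modules is computed pointwise. Applying this at the four grades $\bfp_0, \bfp_1, \bfp_2, \bfp$ and invoking naturality of the long exact sequence associated to the Mayer--Vietoris short exact sequence
\[ 0 \to F^\bullet_{\bfp_0} \to F^\bullet_{\bfp_1} \oplus F^\bullet_{\bfp_2} \to F^\bullet_{<\bfp} \to 0 \]
(and the analogous sequence for $\bar{F}^{\bullet}$), the five lemma forces the induced map $H^n(F^\bullet_{<\bfp}) \to H^n(\bar{F}^\bullet_{<\bfp})$ to be an isomorphism as well. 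The inclusions $F^\bullet_{<\bfp} \to F^\bullet_{\bfp}$ and $\bar{F}^\bullet_{<\bfp} \to \bar{F}^\bullet_{\bfp}$ are natural in the chain complex, producing a commutative square with vertical isomorphisms that compares $\eta_{\bfp}^n(F^{\bullet})$ to $\eta_{\bfp}^n(\bar{F}^{\bullet})$; the dimensions of the corresponding kernels and cokernels therefore coincide, giving $\delta_{\bfp}^n(F^{\bullet}) = \delta_{\bfp}^n(\bar{F}^{\bullet})$.

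The step I expect to be most delicate is the quasi-isomorphism invariance in (i): because $\{\bfz < \bfp\}$ has no cofinal element when $\bfp \neq \mathbf{0}$, one cannot simply commute the colimit defining $F^\bullet_{<\bfp}$ with homology, and the argument must be routed through the Mayer--Vietoris short exact sequence and the five lemma rather than through a direct functoriality statement. Once this is in hand, the remaining dimension count in the long exact sequence is routine.
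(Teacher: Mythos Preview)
Your proof is correct and follows the same overall architecture as the paper: split into (i) invariance of $\delta_{\bfp}^n$ under quasi-isomorphism and (ii) the inequality $\delta_{\bfp}^n(\bar{F}^{\bullet})\leq\gamma_{\bfp}^n(\bar{F}^{\bullet})$ for any free chain complex, then combine. Your argument for (i) is exactly the paper's: apply Mayer--Vietoris to the short exact sequence defining $F^\bullet_{<\bfp}$, use that the quasi-isomorphism gives isomorphisms at the grades $\bfp_0,\bfp_1,\bfp_2$, invoke the five lemma to get an isomorphism on $H^n(\,\cdot_{<\bfp})$, and read off equality of kernel and cokernel dimensions from the resulting commutative square.

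The one genuine difference is in (ii). The paper justifies $\delta\leq\gamma$ by an informal counting statement (``each new generator can increase $\dim\Ker\eta_{\bfp}^{n-1}$ or $\dim\Coker\eta_{\bfp}^n$ by at most one''). Your route via the short exact sequence $0\to\bar{F}^\bullet_{<\bfp}\to\bar{F}^\bullet_{\bfp}\to Q^\bullet\to 0$ and the resulting exact segment $0\to\Coker(\eta_{\bfp}^n)\to H^n(Q^\bullet)\to\Ker(\eta_{\bfp}^{n-1})\to 0$ is more transparent: it makes the inequality a consequence of $\dim H^n(Q^\bullet)\leq\dim Q^n$ and even identifies $\delta_{\bfp}^n(\bar{F}^{\bullet})=\dim H^n(Q^\bullet)$ exactly. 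This is a modest but genuine improvement in rigor over the paper's treatment, at no extra cost.
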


The full proof is also given in~\ref{sec:app_details_1}.
To summarize,
it is not too hard to see that, for any $\bar{F}^{\bullet}$ 
quasi-isomorphic to $F^{\bullet}$,
$\delta_{\bfp}^n(\bar{F}^{\bullet})\leq \gamma_{\bfp}^n(\bar{F}^{\bullet})$. 
Moreover, the quasi-isomorphism between $F^{\bullet}$ and $\bar{F}^{\bullet}$ 
implies that $\dim\,\Ker \, \eta_{\bfp}^{n-1}=\dim\,\Ker \, \iota_{\bfp}^{n-1}$
and $\dim\,\Coker \, \eta_{\bfp}^n=\dim\,\Coker \, \iota_{\bfp}^n$. So,
$\delta_{\bfp}^n(F^{\bullet})= \delta_{\bfp}^n(\bar{F}^{\bullet})$, 
which implies that claim. The equality of the dimensions
is formally verified using the Mayer-Vietoris sequence and the $5$-lemma
to establish an isomorphism from $H^n(F_{<\bfp}^\bullet)$ to $H^n(\bar{F}_{<\bfp}^\bullet)$
that commutes with the isomorphism at grade $\bfp$.



\section{Mpfree compression}
\label{sec:mpfree}

We now describe \mpfree, our algorithm for computing minimal presentations 
of $2$-parameter persistence modules. 
As with the description of \multichunk, we fix the base field $k = \bbZ_2$ for simplicity, 
but the algorithm can easily be reformulated for an arbitrary base field. 
We assume the number of parameters $d$ is $2$. 
For \mpfree \, this assumption is not simply for convenience. 
For $d > 2$, the methods of this section do not immediately apply.

\subsection{Mpfree compression: Input and output settings}\label{mpfree:IO}

In \cref{sec:multi-chunk} we assume that bases of free persistence modules are ordered 
in a way that is compatible with the partial order on the grades of the basis elements. 
We now strengthen this assumption, and assume that bases of free persistence modules 
are in co-lexicographic order with respect to grades, 
with basis elements of the same grade ordered arbitrarily. 
In particular, the graded matrices in this section have columns and rows that are 
in co-lexicographic order with respect to grades. 
We explain the significance of this assumption when we describe 
the column operations performed by the algorithm.

As explained in the introduction, \mpfree \, is a modified version of an algorithm of Lesnick and Wright, 
which we call the LW algorithm. 
The LW algorithm takes as input a short chain complex 
\[
	F^2 \xrightarrow{\partial^2} F^1 \xrightarrow{\partial^1} F^0
\]
of finitely generated free $2$-parameter persistence modules, 
and returns a minimal presentation of the homology of this chain complex. 
More precisely, the input consists of graded matrices $A$ and $B$, 
representing $\partial^2$ and $\partial^1$ respectively.

The LW algorithm consists of four steps, where the first three compute a semi-minimal presentation $M'$, 
and the fourth step computes a minimal presentation $M$ from $M'$; 
our algorithm \mpfree \, shares this structure.

\begin{description}
\item [\texttt{Min\_gens}]
Computes a minimal ordered set of generators $G$ of the image of $A$.
\item [\texttt{Ker\_basis}] Computes an ordered basis $K$ of the kernel of $B$.
\item [\texttt{Reparam}] 
Re-expresses every element of $G$
as a linear combination in $K$, keeping its grade. 
This is possible since $BA=0$.
The resulting
graded matrix $M'$ is a semi-minimal presentation.
\item [\texttt{Minimize}] Identifies pairs $(g,r)$ of generators (rows in $M'$)
and relations (columns in $M'$)
where $r$ ``eliminates'' $g$ and both $r$ and $g$ have the same grade. 
In that case, row $g$ and column $r$ are removed from $M'$ after some
algebraic manipulations without changing the persistence module.
Removing all such pairs results in a minimal presentation $M$.
\end{description}

The difference between the LW algorithm and \mpfree \, is 
in how these steps are implemented. 
We will now describe the LW algorithm, 
and then describe our improvements. 
We give pseudocode in \ref{app:pseudocode}.

\subsection{Mpfree compression: The Lesnick--Wright algorithm}\label{mpfree:LW}

As we assume $d=2$, each column and row of the graded matrices $A$ and $B$ 
come with a grade in $\bbZ^2$. 
We assume that every $x$-coordinate of a grade of a column of $A$ or $B$ is in $\{1,\ldots,X\}$ 
and every $y$-coordinate in $\{1,\ldots,Y\}$. 
Hence, we can visualize $A$ and $B$ via a $X \times Y$ integer grid, where each grid cell contains
a (possibly empty) sequence of matrix columns. Traversing the grid
row by row upwards yields the co-lexicographic order of the matrix. 
We will phrase the algorithms for
\texttt{Min\_gens} and \texttt{Ker\_basis} with this interpretation. 
To illustrate the algorithm we show how each step acts on our running example, 
displayed in \cref{fig:bifiltration}.

\paragraph{Details of \textup{\texttt{Min\_gens}}}
The procedure traverses the columns of $A$ in a certain order defined below.
During the traversal, it maintains a \define{pivot map} $\rho$, 
a partial map from row indices to column indices. The interpretation is that
$\rho(i)=j$ if column $j$ has been visited, has pivot $i$, and there is
no visited column $j'$ with pivot $i$ and $j'<j$. 
Initially, $\rho$ is the empty map, reflecting the state that no column
has been visited.

At any point of the algorithm, \define{reducing} a column $j$ means
the following operation (Alg.~\ref{alg:reduce_lw}): as long as $j$ is not empty, has pivot $i$
and $\rho(i)=j'$ with $j'<j$, add column $j'$ to column $j$.
This results in cancellation of the pivot (since the coefficients are in $\bbZ_2$)
and hence, after the addition,
the pivot of $j$ is strictly smaller than $i$ (or the column is empty).
In either case, the reduction terminates after finitely many iterations, 
and column $j$ is marked as visited. If it ends with a non-empty
column with pivot $i$, set $\rho(i)\gets j$.

We can now describe the procedure \texttt{Min\_gens} (see Alg.~\ref{alg:min_gens_lw}):
Using the grid interpretation from above, traverse the grid cells
in lexicographic order; 
that means, the grid is traversed
column by column from the left, traversing each column bottom-up.
When reaching grid cell $(x,y)$, iterate through all
matrix columns with grade $(1,y), (2,y),\ldots, (x-1,y)$ in that order
(i.e., through all cells on the left of $(x,y)$)
and reduce them as described above. 
Then, iterate through the matrix
columns at grade $(x,y)$ and reduce them as well. 
Append every column at grade $(x,y)$ 
not reducing to $0$ in the output matrix, with grade $(x,y)$.
%

The combination of the co-lexicographic order of the columns 
and the lexicographic order in which we traverse the grid has the following effect. 
If we add column $j'$ to column $j$, 
then $j' < j$ and thus $\gr(j') \leq \gr(j)$ in co-lexicographic order; 
furthermore, $j'$ must be in the image of the pivot map, 
and thus $j'$ has been visited and therefore 
$\gr(j') \leq \gr(j)$ in lexicographic order. 
It follows that $\gr(j') \leq \gr(j)$ in the usual partial order on $\mathbb{Z}^2$, 
which is crucial for the correctness of the algorithm.

Assuming $y \neq 1$, when we finish handling all columns at grade $(x,y-1)$, 
the sub-matrix of $A$ consisting of columns with grade less than or equal to $(x,y-1)$ 
(in the usual partial order on $\mathbb{Z}^2$) is reduced. 
Therefore, when we consider the grade $(x,y)$, 
in order to reduce the sub-matrix consisting of columns with grade less than or equal to $(x,y)$, 
we only need to reduce the columns with grade $(1,y), \dots, (x,y)$. 
See \cite{lesnick2019computing} for more details.

The sub-routine \texttt{Min\_gens} has no work to do on our running example, 
but \texttt{Ker\_basis} will have work to do, 
and the action of this sub-routine will illustrate the matrix reduction 
involved also in \texttt{Min\_gens}.

\paragraph{Details of \textup{\texttt{Ker\_basis}}}
This procedure is similar to the previous one, as it 
visits the columns in the same order, and reduces them when visiting.
There is one difference in the reduction procedure, however: every column
maintains an \define{auxiliary vector}.
Initially, the auxiliary vector of column $j$ is just the unit vector $e_j$
and whenever column $j'$ is added to column $j$, we also add the auxiliary vector of $j'$
to the auxiliary vector of $j$.
In linear algebra terms, the auxiliary vectors yield an auxiliary matrix $S$
(which is the identity matrix initially)
and letting $B'$
denote the matrix arising from $B$ at any point of the algorithm, 
we maintain the
invariant that $B'=BS$. 
In particular, if the $j$-th column of $B'$ is $0$, the $j$-th column
of $S$ encodes the linear combination of the columns of $B$
that represents a kernel element of the linear map $B$.

We describe the procedure \texttt{Ker\_basis} (Alg.~\ref{alg:ker_basis_lw}): 
Traverse the grid cells in lexicographic order.
When reaching grid cell $(x,y)$, iterate through all
matrix columns with grade $(1,y), (2,y),\ldots, (x,y)$ (in that order)
and reduce them as described above.
If any of these columns turn from non-zero to zero during the reduction,
append the auxiliary vector of the column to the output matrix
and set the grade of this column to $(x,y)$.
The resulting matrix encodes the kernel basis of $B$. Its rows
correspond to the columns of $B$ and thus inherit their grades, yielding
a graded matrix as output.

In order to apply \texttt{Ker\_basis} to the matrix $[\partial^1]$ of our running example 
(displayed in \cref{sec:background}), 
we need to put the columns in co-lexicographic order. 
We fix the order $AB, AC, BD, DC, BC, CE, DE$. 
After the matrix reduction, the columns corresponding to 
$DC, BC$, and $DE$ are reduced to zero, and the output of \texttt{Ker\_basis} is:
\[
\scalebox{0.75}{
$\begin{array}{c|ccc}
& (0,0) & (1,1) & (2,2)\\
\hline
(0,0) & 1 & 1 & 1\\
(0,0) & 1 & 1 & 1\\
(0,0) & 1 & 0 & 1\\
(0,0) & 1 & 0 & 0\\
(1,1) & 0 & 1 & 0\\
(2,2) & 0 & 0 & 1\\
(2,2) & 0 & 0 & 1
\end{array}$
}
\]
This gives us the basis 
$\{AB + BD + DC + CA, 
AB + AC + BC, 
AB + AC + BD + CE + DE\}$
for the kernel of $[\partial^1]$.

\paragraph{Details of \textup{\texttt{Reparam}}}
Let $G$ denote the result of \texttt{Min\_gens} and $K$
the result of \texttt{Ker\_basis}. Note that $G$ and $K$ have
the same number of rows, with consistent grades. Form the matrix
$(K \vert G)$ and reduce each column of $G$, using auxiliary vectors. It is guaranteed
that this turns the matrix into $(K \vert 0)$, and the auxiliary vectors of the 
columns of $G$ yields a graded matrix $M'$ which is the output of the
procedure (Alg.~\ref{alg:reparam}).

Returning to our running example, the image of $[\partial^2]$ is generated by 
$G = \{ \partial^2(ABC), \partial^2(BDC), \partial^2(DCE) \}$. 
Re-ex\-press\-ing these with respect to our basis for the kernel of $[\partial^1]$, 
we obtain the following semi-minimal presentation:
\[
\scalebox{0.75}{
$M' = \begin{array}{c|ccc}
& (2,1) & (1,2) & (2,2)\\
\hline
(0,0) & 0 & 1 & 1\\
(1,1) & 1 & 1 & 0\\
(2,2) & 0 & 0 & 1
\end{array}$
}
\]

\paragraph{Details of \textup{\texttt{Minimize}}}
Let $n$ denote the number of columns of $M'$, the output of the previous step.
Traverse the columns of $M'$ from index $1$ to $n$.
If column $i$ is a local column (i.e., the grade of its pivot coincides
with the column grade), let $j$ denote its
pivot and iterate through the columns $i+1$ to $n$; if any column $k$ contains
row index $j$, add column $i$ to column $k$ (eliminating the row index at $j$).
At the end of this inner loop, no column except $i$ has a non-zero entry
at index $j$.
We can therefore remove column $i$ and row $j$ from the matrix,
without changing the persistence module that $M'$ presents.
So, remove column $i$ and row $j$ from the matrix.
After the outer loop has finished, re-index the remaining rows and columns, 
and return the resulting graded matrix $M$ as the minimal presentation (Alg.~\ref{alg:minimize_lw}). 
Note that the inner loop of \texttt{Minimize} can be parallelized.

Applying \texttt{Minimize} to the semi-minimal presentation $M'$ 
of our running example, 
we remove the single local column 
and obtain the following minimal presentation:
\[
\scalebox{0.75}{
$\begin{array}{c|cc}
& (2,1) & (1,2)\\
\hline
(0,0) & 0 & 1\\
(1,1) & 1 & 1
\end{array}$
}
\]

\subsection{Mpfree compression: Improvement via queues}\label{mpfree:Queues}

We now describe methods to improve the performance of 
\texttt{Min\_gens} and \texttt{Ker\_basis}. 
We describe an improvement to \texttt{Minimize} in \cref{mpfree:Minimization}.  
In \cref{sec:experiments} we demonstrate the improvement in performance on 
several classes of input data.

%
In the worst case, the size of the grid used in \texttt{Min\_gens} and \texttt{Ker\_basis} 
is quadratic in the size of the input. 
It can happen, therefore, that simply storing and iterating over the grid can 
lead to quadratic behavior of the LW algorithm 
(see e.g. the ``Points on sphere'' data in \cref{sec:experiments}). 

One way around this is to control the size of the grid by coarsening: 
we choose a coarser grid in $\bbZ^2$ and snap each grade appearing in the matrices $A$ and $B$ 
to the closest grade in the coarser grid. 
Of course, this comes at the cost of computing an approximate solution to the original problem. 
See \cref{tbl:compare_with_rivet} in \cref{sec:experiments} 
for the effect of this coarsening on the LW algorithm. 
We now show that coarsening is not necessary, 
as \texttt{Min\_gens} and \texttt{Ker\_basis} can be adapted so that their performance is 
indifferent to the size of the grid.

First note that it is not sufficient to only consider the grades of the matrix columns 
in \texttt{Min\_gens} and \texttt{Ker\_basis}.
For instance, columns of $B$ on grades $(x',y)$ and $(x,y')$ with
$x'<x$ and $y'<y$ might combine into a kernel element at grade $(x,y)$,
so \texttt{Ker\_basis} has to perform work at grade $(x,y)$ even if
no column exists at this grade.

The main observation is that 
we can predict the grades on which the algorithm has
to (potentially) perform operations, 
effectively avoiding the iteration through all
grid cells. Surely, every grade that appears as a grade of matrix columns 
must be considered, to visit these columns for the first time. 
Moreover, consider the situation that the algorithm is at grade $(x,y)$ and reduces a column with index $i$.
Assume further that the pivot $j$ of $i$
appears already in $\rho$ for an index $k>i$. In that case, the LW-algorithm 
updates $\rho(j)\gets i$ and stops the reduction.
However, we know more: the next time that column $k$ is visited,
column $i$, or perhaps some other column with pivot $j$, will be added
to $k$. When is this next time? Since $i<k$ and the columns are in colex order,
we know that $y$, the $y$-grade of $i$, is smaller than or equal to $y'$, 
the $y$-grade of $k$. If $y'=y$, column $k$ will be handled in the same iteration, 
and nothing needs to be done. If $y'>y$, we know that the algorithm
needs to consider grade $(x,y')$.

Based on this idea, we set up a priority queue that stores the grid cells
that need to be visited, in lexicographic order. The queue is initialized
with the column grades of the matrix. 
Then, instead of iterating over all grid cells, the algorithms \texttt{Min\_gens}
and \texttt{Ker\_basis} keep
popping the smallest element from the queue until the queue is empty,
and proceed on each grade as described before.
We extend the reduction method of a column as follows (Alg.~\ref{alg:reduce_new}):
Whenever the algorithm encounters
a situation as above during a column reduction, it pushes $(x,y')$ 
to the queue.
Every element pushed to the queue is necessarily lexicographically
larger than the current element, so the algorithm terminates~-- in the worst
case after having handled every grid cell once, 
but skipping over many grid cells in practice.

\begin{proposition}
Using a priority queue for the grades as described, the algorithms 
\textup{\texttt{Min\_gens}} and 
\textup{\texttt{Ker\_basis}} produce the same output as in the LW algorithm.
\end{proposition}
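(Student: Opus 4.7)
The plan is to prove the proposition by induction on the sequence of grid cells popped by the improved algorithm, maintaining the invariant that after processing each pop of grade $(x,y)$ the improved algorithm's state (current matrix, pivot map $\rho$, partial output) coincides with the state the LW algorithm is in after having processed every grid cell up to and including $(x,y)$ in lexicographic order. Since the in-cell reduction loop is literally identical in both algorithms---the priority-queue variant only changes which cells are visited---once this invariant is established, equality of outputs follows at termination, when the queue is empty and LW has processed every grid cell.

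The inductive step reduces to a single gap lemma: if a grid cell $(x_0,y_0)$ lies strictly between two consecutive pops of the improved algorithm, then LW performs no state-changing operation at $(x_0,y_0)$. The ``no new output'' half is immediate, because an output row or column is only emitted at a grade that carries an input matrix column, and every such grade is placed in the initial queue. For the ``no column addition'' half I would argue by contradiction: suppose LW performs $i \gets i + k'$ at $(x_0,y_0)$, where column $i$ has current pivot $j$ and $\rho(j)=k'<i$. I would trace back the chain of overwrites that installed $k'$ at slot $j$. Since $\rho$ can only strictly decrease over time, there is a sequence $\rho(j) = i \to m_1 \to m_2 \to \cdots \to k'$, with each overwrite $m_{t-1} \to m_t$ (using $m_0 := i$) occurring at a previously popped grid cell $(x^t, y^t)$. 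The column triggering the $t$-th overwrite is $m_t$, so $y^t = y_{m_t}$, and because $m_t < m_{t-1}$ in co-lex order we get $y^t \leq y_{m_{t-1}}$; in particular $y^1 \leq y_i$. Thus either $y^1 = y_i$---in which case $m_1$ and $i$ share a row and are swept within the same grid-cell iteration, so the reduction of $i$ there already adds $m_1$ to $i$ and changes $i$'s pivot---or $y^1 < y_i$, in which case the push rule inserts $(x^1, y_i)$ into the queue. In the latter case $(x^1, y_i) \leq (x_0, y_0)$ in lex (because $x^1 \leq x_0$, else the update would postdate $(x_0,y_0)$), so the improved algorithm reduces $i$ at $(x^1, y_i)$, the addition of $m_1$ to $i$ happens there, and $i$'s pivot changes strictly before $(x_0,y_0)$, contradicting the hypothesis that LW finds $i$ with pivot $j$ at $(x_0,y_0)$.

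The hard part is handling the remaining overwrites $m_1 \to m_2 \to \cdots \to k'$: once $i$'s pivot changes at the first push, we must argue inductively that any further update relevant to $i$ is similarly captured before LW would see it. The cleanest way to package this is via a potential function that assigns to every pair (column $i$, current pivot $j$ of $i$) the lex-minimal grade on $i$'s row at which LW would next perform an addition on $i$ against $\rho(j)$, and to show that every queue push strictly decreases this potential with respect to the well-founded lex order on unprocessed grid cells; the induction then terminates and all chains are covered uniformly. Since \texttt{Min\_gens} and \texttt{Ker\_basis} share the same reduction loop and the same pivot-map bookkeeping---differing only in whether the equal-grade columns are included in the current-cell sweep and in what is appended to the output, neither of which interacts with the queue logic---this single argument establishes the proposition for both procedures simultaneously.
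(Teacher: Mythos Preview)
Your overall plan---maintain a state-coincidence invariant by induction and reduce the inductive step to a gap lemma---matches the paper's strategy. But there are two genuine problems.

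First, the claim that ``an output row or column is only emitted at a grade that carries an input matrix column'' is false for \texttt{Ker\_basis}. A column at grade $(x',y)$ with $x'<x$ can be reduced to zero when the algorithm is processing grade $(x,y)$, and the kernel element is then emitted with grade $(x,y)$ even if no input column lives there; this is precisely how kernel generators at join grades arise. The paper handles this correctly by observing that turning a column from non-zero to zero entails at least one column addition at that grade, so the \texttt{Ker\_basis} output case is subsumed by the column-addition case rather than being ``immediate''.

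Second, your chain-of-overwrites argument is both more complicated than needed and not closed as stated. The paper's argument is a single step: take $i$ to be the \emph{smallest} index operated on at the significant grade $(x,y)$; then $i$ was last reduced at $(x-1,y)$ with $\rho(j)=i$, and co-lex ordering forces every column touched at grades $(x-1,y+1),\dots,(x-1,Y)$ to have $y$-grade strictly larger than $y$ and hence index strictly larger than $i$, so none of those iterations can lower $\rho(j)$ below $i$. Hence the first overwrite of $\rho(j)$ must occur at some grade $(x,y'')$ with $y''<y$, and the push rule there inserts $(x,y)$ directly---no chain, no potential function. Your final contradiction (``$i$'s pivot changes strictly before $(x_0,y_0)$, contradicting the hypothesis that LW finds $i$ with pivot $j$'') also conflates the two algorithms: the push you invoke is performed by the improved algorithm, but the pivot hypothesis is about LW, and you only know the two agree up through the \emph{previous} pop, not up through $(x^1,y_i)$. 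The paper avoids this circularity by (implicitly) inducting over all grades in lex order rather than over pops, so that when it appeals to what happens at the earlier grade $(x,y'')$ the invariant is already available there.
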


\begin{proof}
The argument is mostly the same for \texttt{Min\_gens} and \texttt{Ker\_basis},
and we just talk about ``the algorithm'' for either of them. More precisely, we 
refer to the LW version of the algorithm and the optimized version of the algorithm
when talking about the variant without and with the priority queue, respectively.

We call a grade $(x,y)$ \emph{significant} if the algorithm, on grade $(x,y)$
performs a column operation on the matrix or appends a column to the output matrix.
We show that every significant grade in the LW algorithm is added to the priority
queue in the optimized version. That proves that the outcome is the same using an inductive argument.

Fix a grade $(x,y)$ and assume that the algorithm (in the LW version) performs
a column operation in this iteration. Let $i$ be the smallest index on which
such an operation is performed, let $j$ be its pivot, 
and let $(x',y)$ be its grade with $x'\leq x$.
If $x'=x$, then there is a matrix column with grade $(x,y)$ and the grade
is pushed into the grade priority queue initially. Otherwise, if $x'<x$,
column $i$ has been reduced previously in grade $(x-1,y)$, and the reduction
ended with $\rho(j)=i$ (otherwise, a further column addition would have been performed).
The fact that a column addition is needed at grade $(i,j)$ means that $\rho(j)$ must
have been re-set, to an index smaller than $i$. However, between $(x-1,y)$ and $(x,y)$,
the algorithms iterated over the grades $(x-1,y+1),(x-1,y+2),\ldots,(x-1,Y),(x,1),(x,2),\ldots (x,y-1)$
with $Y$ the maximal $y$-grade. Note that in the first part of the sequence, with $x$-grade $x-1$,
only columns with index $>i$ are updated because the matrix is stored in colex order.
Hence, none of these iterations can set $\rho(j)$ to a smaller index.
It follows that the update of $\rho(j)$ happens at a grade with $x$-grade $x$.
But then, the updated reduction algorithm ensures when $\rho(j)$ is updated,
the grade $(x,y)$ is added to the priority queue in this step.

Finally we consider the case that at grade $(x,y)$, the algorithm appends an output element. 
If the algorithm is \texttt{Min\_gens}, this only happens when the grade appears as matrix
column, and as argued earlier, these grades are added to the priority queue.
For \texttt{Ker\_basis}, an output column is added if a column is reduced from
non-zero to zero, and this implies that at least one column addition was performed, so
the grade is considered by the first part.
\end{proof}

A further improvement is based on a very similar idea:
note that when a grade $(x,y)$ is handled, 
both \texttt{Min\_gens} and \texttt{Ker\_basis}
still scan through all columns of grade $(x',y)$ with $x'\leq x$
and reduce all columns in this range.
Since only a few columns in this range typically need an update,
most of the time in the algorithm is wasted scanning through this range.

Necessary updates can be predicted during earlier steps in the algorithm:
as above, when, at grade $(x,y)$ 
column $i$ is reduced and its pivot is found in a column $k>i$,
we know that column $k$ needs an update. 
Let $y'\geq y$ be the $y$-grade of $k$.
We can just remember the index $k$ and handle it the next time
when $y$-grade $y'$ is visited, which will be at grade $(x,y')$. 

Technically, we realize this idea by storing one priority queue per $y$-grade.
In the extended column reduction, in a situation as above,
the index $k$ is pushed to the priority queue of its $y$-grade.
When handling a grade $(x,y)$, instead of 
scanning through the columns of grade $(1,y),\ldots,(x-1,y)$,
we keep popping the smallest index from
the priority queue of $y$ and reduce the column (this might introduce new
elements to the priority queue, if $y=y'$ with the notation from above, but new elements are of larger index, so the procedure
eventually empties the queue). After the queue is empty, the algorithm proceeds
with the columns on grade $(x,y)$ as in the LW-version. See Alg.~\ref{alg:min_gens_new}
and~\ref{alg:ker_basis_new} for pseudocode.

%
%
\begin{proposition}
Using one priority queue per $y$-grade as described, the algorithms 
\textup{\texttt{Min\_gens}}
and \textup{\texttt{Ker\_basis}} produce the same output as in the LW algorithm.
\end{proposition}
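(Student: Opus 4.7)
The plan is to mirror the structure of the proof of the previous proposition, but now tracking column indices pushed into per-$y$-grade queues rather than grid grades pushed into a global queue. As before, it suffices to show that whenever the LW algorithm performs a significant action (a column operation or an output append) at some grade $(x,y)$ on some column $k$, the optimized algorithm does the same action at the same grade on the same column. Because both algorithms process the matrix columns at grade $(x,y)$ in the same way after the priority queue for $y$ is emptied, the nontrivial claim is that every column $k$ on which the LW algorithm does a column operation at $(x,y)$ is popped from the priority queue for $y$ before grade $(x,y)$ is processed.

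To establish this, I would first record the invariants: (i) the priority queue for each $y$-grade is a min-queue on column index; (ii) the contents of the pivot map $\rho$ and the set of visited columns after processing grade $(x,y)$ agree with those of the LW algorithm, by induction on the lexicographic order of grades. Then, suppose the LW algorithm needs to reduce column $k$ at grade $(x,y)$, where necessarily $y$ is the $y$-grade of $k$ and $x$ is at most the $x$-grade of $k$. Either $(x,y)$ equals the grade of $k$, in which case $k$ is scanned by the final pass over columns at grade $(x,y)$ exactly as in the LW version, or $(x,y)$ is strictly less than the grade of $k$. In the latter case, the need to update $k$ arose because some $i<k$ was just reduced and discovered to share the pivot previously held by $\rho^{-1}(k)$; by the argument of the previous proof, this discovery must happen at some grade $(x,y')$ with $y' \leq y$. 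At the moment of this discovery, the extended reduction rule (Alg.~\ref{alg:reduce_new}) pushes $k$ to the priority queue of its $y$-grade, so $k$ is in the queue for $y$ when the optimized algorithm reaches grade $(x,y)$.

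The main obstacle is the recursive interaction at a single $y$-grade: while draining the queue for $y$ at grade $(x,y)$, reductions can push new indices into the same queue. One must verify termination and correctness of this inner loop. Termination follows because any index pushed during the reduction of $k$ is strictly greater than $k$ (it is the current value of $\rho(j)$ for the pivot $j$ of $k$, which by construction exceeds $k$), so the sequence of popped indices is strictly increasing and bounded by the number of columns. Correctness follows because popping in increasing order of index replays precisely the order in which the LW algorithm would scan those same columns at $(x,y)$.

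A small separate point needs to be handled for \texttt{Min\_gens}: an output column is appended only at a grade that appears as a matrix column, and such grades are visited directly by the final pass at $(x,y)$. For \texttt{Ker\_basis}, an output column is appended when a reduction turns a non-zero column into zero, which requires at least one column addition, and thus the column has already been popped from the queue as established above. Combining these observations with the inductive invariants yields that the optimized algorithms produce exactly the same matrices as the LW algorithm.
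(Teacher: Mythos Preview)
Your approach is essentially the same as the paper's: both argue inductively that every column on which the LW variant performs an operation at grade $(x,y)$ ends up in the priority queue for $y$, by tracing back to the moment the pivot map was overwritten and invoking the push that happens there. Your added discussion of termination of the inner drain loop and the separate treatment of output appends for \texttt{Min\_gens} versus \texttt{Ker\_basis} is helpful detail that the paper's proof omits.

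However, you have a consistent sign slip in the case analysis. You write that ``$x$ is at most the $x$-grade of $k$'' and then split into ``$(x,y)$ equals the grade of $k$'' or ``$(x,y)$ is strictly less than the grade of $k$''. Both inequalities are reversed: at grade $(x,y)$ the LW algorithm scans columns with grades $(1,y),\ldots,(x,y)$, so the $x$-grade of $k$ is at most $x$, and in the nontrivial case the grade of $k$ is strictly \emph{less} than $(x,y)$. Your subsequent reasoning (``the need to update $k$ arose because some $i<k$ was just reduced\ldots'') only makes sense with the corrected direction, so the fix is purely notational, but as written the dichotomy is incoherent.
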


\begin{proof}
Again, we argue inductively that the same column operations are performed
in the variant without and with priority queues. Fix a column $i$
for which the LW variant (without priority queues) performs a column operation
for grade $(x,y)$. It suffices to show that $i$ is pushed to the priority queue
for $y$. If $i$ is at grade $(x,y)$, it is pushed by the algorithm as specified,
so we can assume that its grade is $(x',y)$ with $x'<x$.

Let $j$ be the pivot of column $i$. As in the previous proof,
the fact that a column addition is performed means that $\rho(j)$ has been updated
during the algorithm since $i$ was visited at grade $(x-1,y)$, and this can only
happen at a grade $(x,y')$ with $y'\leq y$. By the modified reduction method,
$i$ will be pushed into the priority queue of $y$ in this case.
\end{proof}

\subsection{Mpfree compression: Improvement to minimization}\label{mpfree:Minimization}
We now explain how the performance of \texttt{Minimize} can be improved. 
The modification is similar in spirit to the queue optimizations: 
we improve the runtime by avoiding unnecessary scanning for columns that need to be updated.

Recall that, whenever \texttt{Minimize} identifies
a local pair, the algorithm scans to the right to eliminate the local row index.
Simply performing this scan can lead to quadratic time complexity. 
Typically, the row index only appears in a few columns, and the scan
will query many columns that are not updated. Additionally, looking for a fixed
row index is a non-constant operation for most representations of column data.
For instance, if the column is realized as a dynamic array, it requires
a binary search per column.

We can avoid both scanning and binary search by not
eliminating local row indices immediately when they are identified as local. 
Instead, we simply apply \multichunk \,  to the short chain complex
$F^{1} \xrightarrow{M'} F^0 \to 0$, 
where $M'$ is the semi-minimal presentation produced by \texttt{Reparam}. 

\begin{proposition}
  If $P$ is a semi-minimal presentation of a persistence module~$W$,
  and $Q$ is the graded matrix obtained from $P$ by applying \textup{\multichunk},
  then $Q$ is a minimal presentation of $W$.
\end{proposition}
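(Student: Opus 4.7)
The plan is to interpret the semi-minimal presentation $P : F^1 \to F^0$ as a chain complex $F^\bullet$ concentrated in degrees $0$ and $1$ (setting $F^n = 0$ for $n \geq 2$), apply \multichunk\ to it, and show that the resulting boundary $Q : G^1 \to G^0$ in the output chain complex $G^\bullet$ is a minimal presentation of $W$. Since \multichunk\ preserves homotopy type, and in particular homology, the output satisfies $G^n = 0$ for $n \geq 2$ and $\Coker(Q) \iso \Coker(P) = W$; so $Q$ is at least a presentation of $W$, and what remains is to establish minimality.

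Because $P$ is semi-minimal, it decomposes as $P \iso M \oplus \mathrm{id}_H$, with $M : K^1 \to K^0$ a minimal presentation of $W$ and $H$ free. Viewing $M$ as a chain complex $K^\bullet$ concentrated in degrees $0$ and $1$, the projection maps $F^i = K^i \oplus H \to K^i$ assemble into a chain map $F^\bullet \to K^\bullet$ that I would verify is a quasi-isomorphism by a direct calculation: it descends to the identity on $H^0 \iso W$ and induces the canonical isomorphism $\Ker(P) \iso \Ker(M)$ on $H^1$. Applying \cref{thm:optimality,lemma:leq} with $\bar{F}^\bullet = K^\bullet$ then yields $\gamma_{\bfp}^n(G^\bullet) \leq \gamma_{\bfp}^n(K^\bullet) = |K^n|_{\bfp}$ for $n = 0, 1$ and every grade $\bfp$.

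For matching lower bounds I would invoke the graded Nakayama lemma twice. Any surjection from a free persistence module onto $W$ must have at least $|K^0|_{\bfp}$ generators at each grade $\bfp$, because this count is an isomorphism invariant of $W$. Applied to the canonical $G^0 \to W$, this gives $|G^0|_{\bfp} \geq |K^0|_{\bfp}$, and combined with the previous upper bound we get equality at every $\bfp$. Hence $G^0 \iso K^0$ as free persistence modules, and so $\Ker(G^0 \to W) \iso \Ker(K^0 \to W)$. Applying Nakayama once more to the surjection $G^1 \to \Ker(G^0 \to W)$ induced by $Q$ gives $|G^1|_{\bfp} \geq |K^1|_{\bfp}$, forcing equality. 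Both conditions of the definition of a minimal presentation are then satisfied, and $Q$ is minimal.

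I expect the main obstacle to be upgrading these numerical equalities to the definitional conditions for a minimal presentation: one must verify that when the basis of $G^i$ has the right cardinalities at each grade, the induced map literally carries it bijectively onto a minimal homogenous set of generators of $W$ (for $i=0$) and of $\Ker(G^0 \to W)$ (for $i=1$), rather than merely having the correct cardinalities. This is the standard consequence of the graded Nakayama lemma cited in \cref{sec:background}, but it has to be applied carefully in the $2$-parameter graded setting and combined with the isomorphism-invariance of the number of basis elements of a free persistence module.
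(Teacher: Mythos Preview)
Your proof is correct and takes a genuinely different route from the paper. The paper compares \multichunk\ directly to the LW \texttt{Minimize} procedure: at each grade $\mathbf{z}$, both algorithms remove a number of row--column pairs equal to the rank of the submatrix $P_{\mathbf{z}}$ consisting of rows and columns at grade exactly $\mathbf{z}$, because the column operations of each preserve this rank and leave $P_{\mathbf{z}}$ with distinct pivots; since LW \texttt{Minimize} is already known to output a minimal presentation, so does \multichunk. Your approach instead derives the proposition from the optimality results \cref{thm:optimality} and \cref{lemma:leq} together with graded Nakayama, exhibiting it as a special case of the general minimality of \multichunk\ among quasi-isomorphic free chain complexes and without invoking the LW procedure at all. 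The paper's argument is shorter and stays at the level of elementary matrix manipulations; yours is more conceptual and makes the connection to the paper's own structure theory explicit. The obstacle you flag is not serious: once $|G^0|_{\mathbf p}=|K^0|_{\mathbf p}$ at every grade, the induced surjection $G^0/\mathfrak m G^0 \to W/\mathfrak m W$ is between graded vector spaces of equal dimension and hence an isomorphism, so any basis of $G^0$ maps bijectively to a minimal generating set of $W$; the same reasoning handles $G^1$. The one step that does deserve an extra line is the passage from $G^0\cong K^0$ to $\Ker(G^0\to W)\cong\Ker(K^0\to W)$: you need the two minimal covers to be isomorphic \emph{over} $W$, which is the uniqueness of minimal free covers and follows once more from Nakayama.
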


\begin{proof}
  Since the LW minimization algorithm produces a minimal presentation from $P$,
  it suffices to show that \multichunk \,  removes the same number of row-column pairs
  at each grade as the LW minimization algorithm.
  So, let $\textbf{z}$ be a grade of a column of $P$, and let $P_{\textbf{z}}$ be the sub-matrix of $P$
  consisting of the columns and rows of $P$ with grade $\textbf{z}$.
  To complete the proof, we show that 
  the number of row-column pairs with grade $\textbf{z}$ removed by the LW minimization algorithm
  and the number of row-column pairs with grade $\textbf{z}$ removed by \multichunk \,  
  are both equal to the rank of the matrix $P_{\textbf{z}}$.
  Say that a matrix $N$ has the ``distinct pivot'' property if any pair of non-zero columns of $N$ have
  distinct pivots. Note that if $N$ has the distinct pivot property, then the rank of $N$ is the number
  of non-zero columns of $N$.
  The column operations performed by the LW minimization algorithm 
  do not change the rank of $P_{\textbf{z}}$,
  and after performing all column operations, the sub-matrix $P_{\textbf{z}}$
  has the distinct pivot property, and the columns with grade $\textbf{z}$
  removed by the LW minimization algorithm are the non-zero columns of this matrix.
  Similarly, the column operations performed by 
  \multichunk \,  do not change the rank of $P_{\textbf{z}}$,
  and after performing all column operations, the sub-matrix $P_{\textbf{z}}$
  has the distinct pivot property, and the columns with grade $\textbf{z}$
  removed by \multichunk \,  are the non-zero columns of this matrix.
\end{proof}

\subsection{Mpfree compression: Complexity}\label{mpfree:Complexity}
It is easy to see that \mpfree \, has the same worst-case behavior
as the LW algorithm:
\begin{theorem}
Let $A$ be a $(b\times c)$-matrix and $B$ be an $(a\times b)$-matrix. 
Then, \textup{\mpfree} runs in
\[O((a+b+c)^3)\]
time and 
\[O(b(a+b+c))\]
space in the worst case.
\end{theorem}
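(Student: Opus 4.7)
The plan is to bound each of the four subroutines \texttt{Min\_gens}, \texttt{Ker\_basis}, \texttt{Reparam}, and \texttt{Minimize} separately, and then combine the bounds. Let $N = a + b + c$. I would first observe that the queue-based improvements of \cref{mpfree:Queues} cannot increase the worst-case cost, so it suffices to argue that each subroutine, as described in the LW version, fits within the claimed bounds.

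For \texttt{Min\_gens} applied to the $(b \times c)$-matrix $A$, a standard matrix-reduction argument shows that at most $O(c^2)$ column additions occur, each costing $O(b)$ time since a column has at most $b$ non-zero entries; this gives $O(bc^2) = O(N^3)$ time and $O(bc)$ space. For \texttt{Ker\_basis} applied to the $(a \times b)$-matrix $B$, the same counting gives $O(b^2)$ column additions, but each now updates both a column of $B$ (cost $O(a)$) and a column of the auxiliary matrix $S$ (cost $O(b)$); this yields $O(b^2(a+b)) = O(N^3)$ time, and the space is dominated by $S$ together with $B$, i.e., $O(b(a+b)) = O(bN)$.

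For \texttt{Reparam}, the combined matrix $(K \,|\, G)$ has $O(b)$ rows and $O(b+c)$ columns, with auxiliary vectors of length $O(c)$, so reducing the $O(c)$ columns of $G$ costs $O(bc(b+c)) = O(N^3)$ time, within $O(b(b+c)) = O(bN)$ space. For \texttt{Minimize}, whether we use the LW column scan or the \multichunk -based variant of \cref{mpfree:Minimization}, we act on a graded matrix $M'$ with $O(b)$ rows and $O(c)$ columns; the cost is bounded by $O(bc^2) = O(N^3)$ time, using $O(bc)$ space. Summing over the four steps gives the claimed $O(N^3)$ time bound, while the space is dominated by the auxiliary matrix of \texttt{Ker\_basis}, giving $O(b N) = O(b(a+b+c))$.

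The main obstacle I anticipate is not the arithmetic of any single step, but rather verifying carefully that none of the intermediate matrices blow up beyond the $N \times N$ size class; in particular one must check that the semi-minimal presentation $M'$ has at most as many rows as the kernel basis $K$ (i.e., $O(b)$) and at most as many columns as the minimal generating set $G$ (i.e., $O(c)$), which follows from the description of \texttt{Reparam}. Once this is established, each subroutine slots into the $O(N^3)$ time and $O(bN)$ space envelope and the theorem follows.
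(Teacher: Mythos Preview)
Your plan is correct and follows the same subroutine-by-subroutine strategy as the paper; the paper is simply terser, deferring the first three steps to the analysis in~\cite[Sec.~4.1]{lesnick2019computing} and handling \texttt{Minimize} by invoking \cref{thm:multi-chunk-complexity} with $n=b+c$, while also explicitly bounding the priority-queue overhead as $O((a+b+c)^2\log(a+b+c))$ time and $O(ab+bc)$ space (your remark that the queues ``cannot increase the worst-case cost'' is right but deserves this one line of justification). Two small slips in your sketch do not affect the $O(N^3)$ conclusion: in \texttt{Reparam} the auxiliary vectors have length at most $b$ (the number of columns of $K$), not $c$; and for \texttt{Min\_gens} the bound ``$O(c^2)$ column additions'' relies on each column $j'<j$ being added to $j$ at most once, which can fail in the bi-graded setting---the safe bound is $O(bc)$ additions (each addition strictly decreases the pivot, and there are only $b$ row indices), still giving $O(b^2c)\le O(N^3)$.
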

\begin{proof}
The first three steps of \mpfree \, perform exactly the same matrix operations as the corresponding steps of the LW algorithm, and the complexity
bound (in terms of time and memory) is dominated by these matrices
and column operations on them. Hence, the same analysis as 
in~\cite[Sec 4.1]{lesnick2019computing} applies and yields the desired bound 
for the first three steps.
The additional costs of maintaining priority queues is negligible, because
the maximal number of operations on these queues is $O((a+b+c)^2)$, and
each operation only needs logarithmic time. Moreover, the total number of elements in all queues is $O(bc)$ for \texttt{Min\_gens} and $O(ab)$ for \texttt{Ker\_basis}, so also the memory bound is satisfied.

For the minimization procedure, note that the semi-minimal presentation
is a $(b\times c)$-matrix, and we apply \multichunk \, on it. 
In the notation of the complexity analysis of Section~\ref{sec:multi-chunk},
$n\gets b+c$, and $g$ and $\ell$ are both bounded by $n$. 
By Theorem~\ref{thm:multi-chunk-complexity} and the subsequent remark,
we get a worst-case complexity of $O(n^3)$ and space complexity of $O(n^2)$
which proves the theorem.
\end{proof}

To make our improvements ``visible'' in a complexity analysis, we have
to express the complexity of the algorithm using different parameters. 
We focus on the time complexity.
Let $M'$ denote the total number of column additions during the algorithm,
and $M$ denote the total cost to perform them. 
Note that $M'$ is quadratic in the input size
in the worst case, and $M=O(nM')$ which has led to our cubic 
bound in the previous theorem.
However, in practice, we can expect that $M$ and $M'$ are much smaller 
than what the worst case predicts, similar to the one-parameter case.
Revisiting the LW algorithm, we can express the complexity 
of \texttt{Min\_gens} and \texttt{Ker\_basis}
as $O(M+XY)$,
where $X$ and $Y$ are the grid dimensions of the instance. 
This is because the subroutines iterate over all cells of the grid. In particular, the $O(XY)$ factor appears in practice in every instance,
which is visible for input with a large grid size.

In contrast, \mpfree \, rather has a complexity of $O(M+M'\log n)$
because in the worst case, there is one priority queue operation triggered
by every column operation. Note that if $M$ and $M'$ are linear in $n$
(which is to be expected for realistic inputs), the algorithm shows a complexity
of $O(n\log n)$, independently of the grid size.

\medskip

For the minimization procedure, we still denote by $M$ 
the total cost of all column additions. Then, the LW algorithm
runs its minimization in $O(M+n^2 \log n)$ time, where the second
terms is due to the scanning to the right, looking for a row index to be cleared.
Note again that this scan is performed every time a row index is removed
so we have to expect a quadratic time behavior on every instance.
In contrast, \mpfree \, uses \multichunk \, to avoid the
scan. We can bound its complexity by $O(M+\Delta)$, where $\Delta$ is the cost
of the compression step apart from the column additions performed.\footnote{We remark that the $M$ in this step
is not exactly equal to the $M$ in the LW algorithm because the columns additions
are not identical. Experimentally, we observed that the two costs
are of similar size.} 
$\Delta$ is proportional to the number of non-zero entries encountered
in the presentation matrix, which can be quadratic in the worst case if
the matrix becomes dense. However, this is not to be expected in most cases,
and assuming that the matrix remains sparse and $M$ is linear as before,
the minimization step shows linear behavior.

\section{Experimental results}
\label{sec:experiments}
We implemented both described algorithms in C++ and provide them as software
repositories~\cite{multi-chunk-repo, mpfree-repo} 
under the LGPL license.

\paragraph{Software design}
A few notable aspects of our code:

Both algorithms accept inputs in the \texttt{scc2020} format~\cite{scc2020} and write
their result in the same format. \multichunk \, reads a chain complex
of arbitrary length $k$ and outputs a chain complex of same length,
potentially with fewer generators. \mpfree \, reads a chain complex
of length $2$ and outputs a chain complex of length $1$, encoding
the minimal presentation.

Both algorithms come as a sequential version and a shared-memory parallelized
version using the \texttt{OpenMP} library. This is achieved just by adding the
line
\begin{verbatim}
#pragma omp parallel for schedule(guided,1)
\end{verbatim}
in front of the loops to be run in parallel (see the documentation of \texttt{OpenMP} for the details of the command). We have parallelized the compression
procedure of \multichunk, the reparameterization and the minimization
procedure of \mpfree, and some parts of the input processing.
Unless stated otherwise, we will use the parallelized version throughout
our experiments.

Our algorithms are \emph{generic} in the sense that they operate on
any graded matrix with a well-defined interface using the template mechanism
of C++. This allows our code, in particular, to run on every matrix representation that is provided by the \texttt{PHAT} library. In~\cite{kerber2021fast},
we compared all choices and found out that the \texttt{vector\_vector}
representation was generally a good choice, so we use that representation
throughout our experiments.

\paragraph{Experimental setup}
Our code was compiled with gcc-7.5.0,
and ran on a workstation with an Intel(R) Xeon(R) CPU E5-1650 v3 CPU (6 cores, 12 threads, 3.5GHz) and 64 GB RAM, running GNU/Linux (Ubuntu 16.04.5).
We measured the overall time and memory consumption using the linux
command \texttt{/usr/bin/time}, and the \texttt{timer} library of the
\textsc{Boost} library to measure the time for subroutines.

We used the versions of \multichunk, \mpfree, and RIVET 
available in their bitbucket/github repositories on 6 July 2022.

The scripts for generating the input data and running the experiments, 
as well as the input files, are available 
\cite{tugraz-data-repo}. 
This repository also contains the output files of the experiments, 
which contain the data on time and memory consumption that is summarized 
in the tables in this section.

To test the performance, we consider six different classes of input data
of different sizes, resulting in a total of around 50 GB of data (uncompressed). 
In each case the number of parameters $d$ is $2$. 
With one exception, these data sets are artificial; our goal was to model
input types that appear in practice and to still have some control over the
input sizes. For each combination of class and parameters, we generated
$5$ instances and display the average in the tables. 
Our classes are in detail:

\begin{description}
\item[Full function Rips] 
We sample $n$ points from a noisy circle and consider the simplicial
complex consisting of all $n$ points, $e=\binom{n}{2}$ edges
and $f=\binom{n}{3}$ triangles. 
The first coordinate of the bi-grade
is $0$ for vertices, the distance between endpoints for edges, 
and the length of the longest edge for triangles.
This is the \emph{Vietoris-Rips filtration}, one of the most prominent filtrations
in topological data analysis. For the second grade coordinate,
we used (the negative of) a kernel density estimate with Gaussian kernel with fixed bandwidth
for the vertices, and this is extended to edges and triangles
by assigning the maximal grade among the boundary vertices.
We take the negative of the density estimate so that points in high-density regions 
appear at smaller grades than points in low-density regions. 
The construction ensures that the subcomplex at each grade
is a \emph{flag complex}, that is, a complex that is completely
determined by its vertices and edges.
This yields a chain complex of length $2$ with generators $f,e,n$.
Note that the number of generators is very imbalanced in this example,
and the large number of triangles restricts the number of vertices
(our largest instance has 289 points).

\item[Function Rips with threshold]
We sample $n$ points on a sphere in $\bbR^3$ (with non-uniform density)
and fix a target number $N$ of simplices. We then build the Vietoris-Rips
filtration on these vertices up to $3$-simplices, until it consists of $N$
simplices. The second coordinate of the bi-grade is determined as in
the previous case, except that a linear kernel was used for efficiency.
We choose $N$ as large as 16 million, and always choose $n$ to be $0.05\cdot N$, so that $5\%$ of the simplices are vertices. This leads to chain complexes
of length $3$ with many more generators at level $0$ (up to 800,000) and
a more balanced distribution of generators across the levels.

\item[Points on sphere]
We generate the convex hull of $n$ points 
on a sphere in $\bbR^3$ sampled uniformly at random.
By Euler's formula, the convex hull consists of exactly
$e=3n-6$ edges and $f=2n-4$ triangles.
The grades of a vertex are simply its $x$- and $y$-coordinates
(ignoring the $z$-coordinate), and the $x$-/$y$-grades
of an edge and a triangle is just the maximal $x$-/$y$-coordinate
among its boundary vertices
This is also known as the \emph{lower star (bi-)filtration}
which generalizes a commonly used filtration type
for a single parameter
and has been used as a benchmark example in previous multi-parameter work
\cite{Scaramuccia2018}.
Our largest instances consist of $1.6$ million vertices.

\item[Random Delaunay triangulations]
As a variant of the previous example, 
we sample $n$ points inside the unit sphere,
and we compute the Delaunay triangulation of these $n$ points (using \textsc{CGAL}~\cite{cgal:delaunay}).
As before, we assign to vertices their $x$- and $y$-coordinates as grades,
and to edges, triangles, and tetrahedra using the maximal grade of the boundary vertices.
This results in a chain complex of length $3$ with rather balanced numbers
of generators. Our largest instances consists of $640,000$ vertices.

\item[Multi-cover bi-filtration]
We generated bi-filtrations coming from multi-covers of point clouds: 
the simplicial complex at grade $(r,k)$ represents the region of the plane
covered by at least $k$ balls of radius $r$ around the input points.
A recent result~\cite{cklo-computing} shows how to compute this structure
efficiently based on rhomboid tiling~\cite{eo-multi}.
In our examples, the maximal $k$ is set to $10$. 
Running the LW algorithm, this leads 
to a grid with only $10$ different grades in the $y$-direction.
Also, when picking points in $\bbR^3$, the rhomboid tiling is $4$-dimensional,
resulting in a chain complex of length $4$. Our largest instances pick $390$
points in $\bbR^3$, resulting in chain complexes with around 16 million
generators.

\item[Off-datasets]
We also tested with triangular mesh data,
publicly available at the AIM@SHAPE repository\footnote{available at \url{http://visionair.ge.imati.cnr.it/}}. 
The graded input matrices $(A,B)$ were generated in the same way as for the previously described sphere and random Delaunay meshes.
In this experiments, we restrict to the four datasets ``hand'', ``eros'', ``dragon'' and ``raptor'', each yielding a chain complex of length $2$
with $0.16$, $2.4$, $3.2$ and $5.0$ million simplices, respectively. 
\end{description}

\paragraph{Comparison with RIVET}
We demonstrate that our improvements over the original algorithm
by Lesnick and Wright~\cite{lesnick2019computing} are highly effective.
For that, we present a comparison of \mpfree \, with the minimal
presentation algorithm of RIVET. By comparing the numbers for \mpfree \, and RIVET in Table~\ref{tbl:compare_with_rivet},
we see a clear improvement in time and memory.

We remark that both the usage of priority queues and of using \multichunk \, for the minimization procedure lead to substantial speed-ups.
This was demonstrated carefully in the conference version~\cite{kerber2021fast}. We cannot re-do these experiments because \mpfree \, has no option
to switch off these optimizations.

The main purpose of RIVET is the visualization of a 2-parameter module;
for that reason, the input grades are usually snapped to an $x\times y$-grid
before further processing. While the implementation of the LW-algorithm
in RIVET does not require snapping, it has not been optimized
for large grids.
Therefore, we ran the two algorithms on the same instances
after snapping all bi-grades to a (uniformly chosen) $50\times 50$ grid.
As expected, RIVET becomes significantly faster and more memory efficient.
However, \mpfree \, is still faster even without snapping the input.
In fact, \mpfree's performance after snapping improves by a factor of less than $2$ 
in terms of time and memory for all inputs, so we do not include it
in Table~\ref{tbl:compare_with_rivet}. 

\begin{table*}[h!]\centering
\begin{tabular}{c|l||cc|cc||cc}
& & \multicolumn{4}{c||}{Original} & \multicolumn{2}{c}{Snapped}\\
& & \multicolumn{2}{c|}{RIVET} & \multicolumn{2}{c||}{mpfree} & \multicolumn{2}{c}{RIVET}\\
Class & N & Time & Memory & Time & Memory & Time & Memory \\
\hline
\multirow{3}{*}{FFR} & 253K& 13.8& 98MB& 0.58& 74MB& 11.5& 97MB\\
& 508K& 30.4& 205MB& 1.29& 148MB& 25.9& 223MB\\
& 1.02M& 71.5& 455MB& 3.14& 302MB& 58.9& 503MB\\
\hline
\multirow{3}{*}{PoS} & 37K& 16.9& 3.11GB& 0.12& 31MB& 1.92& 24MB\\
& 75K& 68.0& 12.4GB& 0.25& 64MB& 5.32& 48MB\\
& 150K& 381& 49.3GB& 0.57& 138MB& 16.9& 103MB\\
\hline
\multirow{3}{*}{MCF} & 40K& 1.81& 25MB& 0.11& 25MB& 1.75& 24MB\\
& 100K& 9.40& 79MB& 0.29& 69MB& 8.18& 81MB\\
& 199K& 34.7& 205MB& 0.61& 156MB& 28.6& 212MB\\
\hline
OFF & 172K & $>$3600 & 35.3GB & 0.77 & 145MB & 8.59 & 193MB \\
\end{tabular}
\caption{Time and memory comparison with the RIVET library.
From left to right: class of input, number of generators, running time (in seconds) and memory consumption (three times).}
\label{tbl:compare_with_rivet}
\end{table*}

\paragraph{Performance of \textup{\multichunk}}
See Table~\ref{tbl:bench_multi_chunk} for the performance of \multichunk \, (parallel version). Note that the instances
are much larger than for the RIVET comparison from Table~\ref{tbl:compare_with_rivet}. 
Generally, we can say
that the running time of \multichunk \, is dominated 
by the time for reading the input and converting it into a sequence of boundary matrices
(displayed in the column ``IO''). The compression rate shows the relative
file size of the output file in terms of the file size of the input 
(e.g., 5\% means that the input file is $20$ times bigger than the output).
We observe that the rate differs
over instance classes, but can reduce the chain complex by a factor of more 
than $10$ for many instances. We remark that the output complex might well
be bigger even if the total number of generators does not increase because
of potential fill-up of the boundary matrices. While this happens
for the full function Rips datasets, we point out that passing to this
(slightly) larger chain complex is still beneficial for computing
minimal presentations, as we demonstrate later in the experiments.
Finally, we see a nearly linear behavior in time and memory for all instance classes. 

\begin{table*}[!h]\centering
\begin{tabular}{c|c|ccc|c}
Class & N & Time & IO & Memory & Compression rate \\
\hline
\multirow{4}{*}{FFR}
 & 0.51M & 2.39 & 92\% & 292MB & 132\% \\
 & 1.00M & 5.07 & 91\% & 628MB & 158\% \\
 & 2.00M & 10.2 & 91\% & 1.25GB & 160\% \\
 & 4.02M & 22.3 & 89\% & 2.72GB & 188\% \\
\hline
\multirow{4}{*}{FRT}
 & 2M & 6.5 & 96\% & 1.05GB & 23\% \\
 & 4M & 13.7 & 95\% & 2.08GB & 23\% \\
 & 8M & 28.1 & 96\% & 4.17GB & 23\% \\
 & 16M & 57.5 & 96\% & 8.43GB & 23\% \\
\hline
\multirow{4}{*}{PoS}
 & 2M & 6.18 & 97\% & 0.90GB & 26\% \\
 & 4M & 12.7 & 97\% & 1.74GB & 25\% \\
 & 8M & 25.9 & 97\% & 3.45GB & 26\% \\
 & 16M & 51.5 & 97\% & 6.80GB & 26\% \\
\hline
\multirow{4}{*}{RDT}
 & 2.29M & 5.19 & 95\% & 1.01GB & 1.1\% \\
 & 4.59M & 10.9 & 93\% & 1.99GB & 1.3\% \\
 & 9.22M & 23.4 & 91\% & 3.95GB & 1.7\% \\
 & 18.5M & 51.4 & 86\% & 7.95GB & 2.4\% \\
\hline
\multirow{4}{*}{MCF}
 & 2.02M & 4.36 & 95\% & 0.97GB & 6.7\% \\
 & 4.01M & 9.21 & 95\% & 1.90GB & 7.0\% \\
 & 8.10M & 19.6 & 95\% & 3.83GB & 7.8\% \\
 & 16.1M & 40.9 & 95\% & 7.49GB & 8.8\% \\
\hline
\multirow{4}{*}{OFF}
 & 0.17M & 0.43 & 96\% & 80.3MB & 23\% \\
 & 2.38M & 5.5 & 97\% & 1.00GB & 26\% \\
 & 3.28M & 8.39 & 97\% & 1.33GB & 26\% \\
 & 5.00M & 13.8 & 97\% & 2.11GB & 24\% \\
\end{tabular}
\caption{Results for \multichunk \, on large datasets. 
From left to right: class of input, number of generators, running time (in seconds), 
percentage of running time used for IO operations, memory consumption, and compression rate.}
\label{tbl:bench_multi_chunk}
\end{table*}

\paragraph{Combining \textup{\multichunk} and \textup{\mpfree}}
On the same instances as in the previous paragraph, we also ran \mpfree. More precisely we computed all minimal presentations induced by the chain complex, that is, for any pair of consecutive maps,
we applied \mpfree \, independently. The output is then a sequence of $d-1$ minimal presentations for a chain complex of length $d$. 
We measured both the running time of the sequential and the parallel
version of \mpfree. 
Table~\ref{tbl:bench_mpfree_with_multi_chunk} shows the timings
on the left (``mpfree only''). We observe a super-linear behaviour in running time and a close-to-linear behavior in memory for most examples. Also, we can see that parallelization only gives marginal speed-ups, if at all.

Futhermore, we combined both algorithms by first applying
\multichunk \, on the input chain complex and subsequently applying 
\mpfree \, on the compressed chain complex, with the same workflow
as above. As we observe in Table~\ref{tbl:bench_mpfree_with_multi_chunk},
the success of preprocessing with \multichunk \, varies from class to class,
but is sometimes significant, in particular for the multi-cover
and the random Delaunay data. One aspect is the length of the chain complex:
the \multichunk \, preprocessing can make use of the clearing optimization
to avoid many column operations that would otherwise be performed
in the subroutines \texttt{Min\_gens} and \texttt{Ker\_basis}.
But also for short chain complexes where clearing does not apply,
\multichunk \, reduces the time and memory consumption. In particular,
the performance of the combined algorithm is coming much closer to a linear
behaviour. 

The last column of Table~\ref{tbl:bench_mpfree_with_multi_chunk} shows the compression rate 
as the ratio of the file sizes of all minimal presentations combined over the size of the input 
(as before, 5\% means that the input file is $20$ times bigger than the output). 
We see that indeed, the minimal presentations
are significantly smaller than the input chain complex.

While in the sequential version,
\multichunk \, does not help for full function Rips filtrations (FFR), it still
leads to a time improvement with parallelization (at the cost
of using more memory). The reason is that most time in these instances
is spent on column reductions in the dominant highest dimension and the
\multichunk \, allows for a parallel processing of these steps.
It should be remarked, however, that the execution is done with $12$ threads,
and the speed-up is far away from the ideal factor of $12$.

\begin{table*}[!h]\centering
{\footnotesize
\begin{tabular}{c|c||cc|cc||cc|cc||c}
& & \multicolumn{4}{c||}{mpfree only} & \multicolumn{4}{c||}{mpfree and multi-chunk} \\
& & \multicolumn{2}{c|}{sequential} & \multicolumn{2}{c||}{parallel} & \multicolumn{2}{c|}{sequential} & \multicolumn{2}{c||}{parallel} \\
Class & N & Time & Memory & Time & Memory & Time & Memory & Time & Memory & Size\\
\hline
\multirow{4}{*}{FFR}
 & 0.51 & 2.01 & 0.32 & 2.07 & 0.32 & 2.1 & 0.41 & 1.60 & 0.42 & 0.02\% \\
 & 1.00 & 4.64 & 0.63 & 4.82 & 0.63 & 4.89 & 0.89 & 3.42 & 0.92 & 0.01\% \\
 & 2.00 & 9.84 & 1.24 & 10.0 & 1.24 & 10.1 & 1.79 & 7.11 & 1.83 & 0.01\% \\
 & 4.02 & 24.3 & 2.49 & 24.6 & 2.49 & 25.2 & 3.91 & 16.0 & 4.02 & 0.01\% \\
\hline
\multirow{4}{*}{FRT}
 & 2 & 13.0 & 1.52 & 12.7 & 1.62 & 8.05 & 1.13 & 7.51 & 1.16 & 9.4\% \\
 & 4 & 29.4 & 2.92 & 28.7 & 3.23 & 17.3 & 2.21 & 16.1 & 2.32 & 9.4\% \\
 & 8 & 65.8 & 5.98 & 63.8 & 6.46 & 37.6 & 4.42 & 35.1 & 4.61 & 9.3\% \\
 & 16 & 145 & 12.4 & 140 & 13.3 & 80.7 & 8.80 & 74.8 & 9.23 & 9.3\% \\
\hline
\multirow{4}{*}{PoS}
 & 2 & 12.6 & 2.91 & 11.9 & 3.55 & 6.77 & 1.10 & 6.48 & 1.17 & 0.04\% \\
 & 4 & 27.5 & 6.34 & 25.8 & 7.76 & 14.4 & 2.19 & 13.7 & 2.35 & 0.03\% \\
 & 8 & 60.7 & 13.9 & 55.7 & 17.46 & 30.4 & 4.41 & 28.8 & 4.76 & 0.02\% \\
 & 16 & 134 & 30.8 & 122 & 39.13 & 63.7 & 8.83 & 60.0 & 9.76 & 0.02\% \\
\hline
\multirow{4}{*}{RDT}
 & 2.29 & 25.5 & 2.35 & 24.1 & 2.69 & 5.92 & 0.99 & 5.22 & 1.01 & 0.7\% \\
 & 4.59 & 69.5 & 5.11 & 65.0 & 5.79 & 13.9 & 1.92 & 11.1 & 1.99 & 0.9\% \\
 & 9.22 & 191 & 11.14 & 176 & 12.84 & 35.2 & 3.81 & 23.9 & 3.95 & 1.2\% \\
 & 18.5 & 555 & 24.63 & 500 & 28.93 & 98.4 & 7.59 & 52.6 & 7.96 & 1.8\% \\
\hline
\multirow{4}{*}{MCF}
 & 2.02 & 37.8 & 3.23 & 36.6 & 3.68 & 5.41 & 0.95 & 5.1 & 0.97 & 3.4\% \\
 & 4.01 & 112 & 7.50 & 108 & 8.75 & 11.7 & 1.85 & 10.8 & 1.91 & 3.7\% \\
 & 8.10 & 330 & 17.4 & 317 & 20.7 & 26.7 & 3.77 & 23.7 & 3.92 & 4.3\% \\
 & 16.1 & 927 & 38.0 & 889 & 46.2 & 61.5 & 7.67 & 52.6 & 7.90 & 5.0\% \\
\hline
\multirow{4}{*}{OFF}
 & 0.17 & 0.87 & 0.18 & 0.82 & 0.20 & 0.43 & 0.09 & 0.41 & 0.09 & 0.5\% \\
 & 2.38 & 19.4 & 3.20 & 18.6 & 3.73 & 6.53 & 1.16 & 6.13 & 1.23 & 0.1\% \\
 & 3.28 & 26.0 & 4.01 & 24.6 & 4.63 & 10.3 & 1.63 & 9.69 & 1.74 & 0.3\% \\
 & 5.00 & 39.0 & 5.90 & 37.3 & 6.81 & 15.2 & 2.35 & 14.3 & 2.48 & 0.2\% \\
\end{tabular}
} 
\caption{Results for running \mpfree \, and combining \mpfree \, with \multichunk, sequential and parallel version. From left to right: class of input, number of generators (in millions), running time (in seconds) and memory consumption (in GB) (four times). The last column denotes the total size of all minimal presentation files (in bytes) divided by the size of the input size (also in bytes).}
\label{tbl:bench_mpfree_with_multi_chunk}
\end{table*}

\smallskip

We also report on a perhaps surprising observation. For that, we truncated
chain complexes $C_n\to \ldots \to C_1\to C_0\to 0$ of length $>2$ (classes FRT, RDT, and MCF) to a short
chain complex $C_2\to C_1\to C_0$.
The truncated chain complex is smaller in size, saving time on
IO operations, and \mpfree \, needs to be called only once.
Nevertheless, as we see in Table~\ref{tbl:truncated}, the total running time
is still sometimes larger for the truncated chain complex, compared to computing all minimal presentations using \multichunk \, (compare the RDT instances in Tables~\ref{tbl:bench_mpfree_with_multi_chunk} and~\ref{tbl:truncated}). 
The reason for this
slow-down is that for the full chain complex, \multichunk \, can use
the clearing optimization to avoid the reduction of the majority of columns
in the first map of the chain complex, whereas this information is lost in
the truncated complex, and the algorithm has to fully reduce all columns to zero.

\begin{table*}[!h]\centering
{\footnotesize
\begin{tabular}{c|c||cc|cc||cc|cc}
& & \multicolumn{4}{c||}{mpfree only} & \multicolumn{4}{c}{mpfree and multi-chunk}\\
& & \multicolumn{2}{c|}{sequential} & \multicolumn{2}{c||}{parallel} & \multicolumn{2}{c|}{sequential} & \multicolumn{2}{c}{parallel} \\
Class & N & Time & Memory & Time & Memory & Time & Memory & Time & Memory\\
\hline
\multirow{4}{*}{FRT}
 & 1.12 & 6.52 & 1.01 & 6.33 & 1.12 & 4.50 & 0.71 & 4.37 & 0.74 \\
 & 2.24 & 14.9 & 2.05 & 14.1 & 2.30 & 10.1 & 1.40 & 9.62 & 1.47 \\
 & 4.48 & 33.3 & 4.24 & 31.5 & 4.63 & 22.0 & 2.80 & 20.9 & 2.97 \\
 & 8.97 & 72.9 & 8.92 & 68.4 & 9.76 & 46.9 & 5.68 & 43.8 & 6.05 \\
\hline
\multirow{4}{*}{RDT}
 & 1.68 & 10.8 & 1.63 & 9.88 & 1.82 & 13.9 & 0.97 & 5.68 & 1.08 \\
 & 3.37 & 27.3 & 3.49 & 24.0 & 3.90 & 45.6 & 2.19 & 14.3 & 2.51 \\
 & 6.75 & 70.1 & 7.46 & 58.0 & 8.45 & 161 & 5.19 & 40.7 & 6.16 \\
 & 13.5 & 190 & 16.3 & 141 & 18.7 & 604 & 13.2 & 131 & 16.2 \\
\hline
\multirow{4}{*}{MCF}
 & 1.23 & 9.99 & 1.20 & 9.76 & 1.33 & 5.61 & 0.63 & 4.52 & 0.67 \\
 & 2.44 & 26.4 & 2.50 & 25.9 & 2.78 & 13.3 & 1.26 & 10.6 & 1.33 \\
 & 4.93 & 70.4 & 5.91 & 69.1 & 6.62 & 32.5 & 2.57 & 25.8 & 2.77 \\
 & 9.76 & 176 & 13.1 & 172 & 14.9 & 77.3 & 5.27 & 60.6 & 5.74 \\
\end{tabular}
} 
\caption{Results for running \mpfree \, and combining \mpfree \, with \multichunk, sequential and parallel version. Note that the instances here are subsets of the corresponding full chain complexes displayed
in Table~\ref{tbl:bench_mpfree_with_multi_chunk}. From left to right: class of input, number of generators (in millions), running time (in seconds) and memory consumption (in GB) (four times).}
\label{tbl:truncated}
\end{table*}

\paragraph{Detailed runtime analysis}
We list the running time of the major substeps of the algorithm in Table~\ref{tbl:detailed_analysis}. We restrict to the generally best configuration, namely
running \multichunk \, before \mpfree, with parallelization.

We observe that, similar to running~\multichunk \, alone, a substantial
part of the running time goes into reading the input file and building up
the matrices, speaking in favor of the efficiency of our algorithm.
We observe that depending on the instance, the more expensive steps
of the method are the compression subroutine of \multichunk,
and the subroutines \texttt{Min\_Gens} and \texttt{Ker\_basis}
of \mpfree. Without parallelization, the increase
in running time (visible in Table~\ref{tbl:bench_mpfree_with_multi_chunk})
is mostly due to the increased running time of the compression subroutine.
When \multichunk \, is not used, \texttt{Min\_Gens} and \texttt{Ker\_basis} get more expensive, resulting in a further increase in runtime.
The conference version~\cite{kerber2021fast} contains some precise numbers
which we omit for brevity.

\begin{table*}[!h]\centering
\begin{tabular}{c|c|c||cccccccc}
Class & N & Time & IO & LR & CP & MG & KB & RP & Min\\
\hline
\multirow{4}{*}{FFR}
 & 0.51 & 1.60 & 0.88 & 0.04 & 0.14 & 0.30 & 0.00 & 0.00 & 0.00 \\
 & 1.00 & 3.42 & 1.74 & 0.08 & 0.37 & 0.75 & 0.00 & 0.00 & 0.00 \\
 & 2.00 & 7.11 & 3.44 & 0.15 & 0.76 & 1.79 & 0.00 & 0.00 & 0.00 \\
 & 4.02 & 16.0 & 7.00 & 0.30 & 1.97 & 4.70 & 0.00 & 0.00 & 0.00 \\
\hline
\multirow{4}{*}{FRT}
 & 2 & 7.51 & 5.10 & 0.12 & 0.12 & 0.45 & 0.56 & 0.12 & 0.24 \\
 & 4 & 16.1 & 10.4 & 0.27 & 0.24 & 1.12 & 1.44 & 0.26 & 0.56 \\
 & 8 & 35.1 & 21.8 & 0.51 & 0.45 & 2.78 & 3.69 & 0.55 & 1.36 \\
 & 16 & 74.8 & 44.7 & 1.03 & 0.91 & 6.87 & 8.97 & 1.13 & 3.26 \\
\hline
\multirow{4}{*}{PoS}
 & 2 & 6.48 & 4.62 & 0.05 & 0.12 & 0.17 & 0.98 & 0.04 & 0.02 \\
 & 4 & 13.7 & 9.56 & 0.10 & 0.23 & 0.46 & 2.26 & 0.10 & 0.04 \\
 & 8 & 28.8 & 19.5 & 0.20 & 0.43 & 1.28 & 5.16 & 0.20 & 0.10 \\
 & 16 & 60.0 & 39.2 & 0.39 & 0.85 & 4.02 & 11.6 & 0.42 & 0.23 \\
\hline
\multirow{4}{*}{RDT}
 & 2.29 & 5.22 & 4.64 & 0.09 & 0.14 & 0.03 & 0.03 & 0.02 & 0.02 \\
 & 4.59 & 11.1 & 9.67 & 0.19 & 0.46 & 0.06 & 0.07 & 0.03 & 0.04 \\
 & 9.22 & 23.9 & 20.3 & 0.38 & 1.60 & 0.16 & 0.18 & 0.09 & 0.10 \\
 & 18.5 & 52.6 & 42.2 & 0.77 & 5.97 & 0.39 & 0.43 & 0.23 & 0.28 \\
\hline
\multirow{4}{*}{MCF}
 & 2.02 & 5.1 & 3.98 & 0.10 & 0.10 & 0.19 & 0.29 & 0.07 & 0.07 \\
 & 4.01 & 10.8 & 8.23 & 0.19 & 0.22 & 0.47 & 0.75 & 0.16 & 0.18 \\
 & 8.10 & 23.7 & 17.4 & 0.36 & 0.49 & 1.27 & 2.11 & 0.36 & 0.47 \\
 & 16.1 & 52.6 & 36.5 & 0.74 & 1.14 & 3.46 & 6.11 & 0.86 & 1.14 \\
\hline
\multirow{4}{*}{OFF}
 & 0.17 & 0.41 & 0.32 & 0.00 & 0.01 & 0.00 & 0.04 & 0.00 & 0.00 \\
 & 2.38 & 6.13 & 4.08 & 0.05 & 0.10 & 0.15 & 1.30 & 0.05 & 0.02 \\
 & 3.28 & 9.69 & 6.35 & 0.08 & 0.16 & 0.29 & 2.09 & 0.08 & 0.04 \\
 & 5.00 & 14.3 & 10.4 & 0.12 & 0.27 & 0.27 & 2.15 & 0.05 & 0.04 \\
\end{tabular}
\caption{Results for running \multichunk \, and \mpfree, parallel version. From left to right: class of input, number of generators (in millions), running time (in seconds), time for IO operations (IO), time for local reductions in \multichunk \, (LR), time for compression in \multichunk \, (CP), time for \texttt{Min\_gens} in \mpfree \, (MG), time for \texttt{ker\_basis} (KB), time for reparameterization (RP), time for minimization (Min)}
\label{tbl:detailed_analysis}
\end{table*}

\section{Conclusions}
\label{sec:conclusions}
We presented the two compression techniques \multichunk \, and \mpfree \, which preserve homological properties
of the underlying data sets with significant compression rates
and fast performance.

There is no doubt that compression is an inevitable step in making
multi-parameter persistent homology a practical tool for data analysis.
We hope that upcoming algorithmic work on $2$-parameter filtrations
will report in more detail on the performance gains of compression
techniques.

We expect our software to be immediately useful for practioners trying
to analyze $2$-parameter filtrations. Nevertheless, there are many 
improvements possible: one idea we have not carried over from the $1$-parameter
setup is the \emph{dualization} or \emph{cohomology} approach, 
which suggests inverting the chain complex and proceeding with increasing
indices. This has proved to be a successful strategy in particular
for Vietoris-Rips complexes~\cite{Bauer2017, ripser} but is harder to apply
for $2$ or more parameters. Recently, a result in this direction
has been announced~\cite{lbl-efficient} and we hope our speed-ups will be
complementary to their approach.
Better compression rates and/or performance gains could be achieved
for special types of filtrations; 
for example, a result for multi-parameter flag filtrations has been announced 
recently~\cite{akp-flag}. 
Finally, further gains could be achieved by combination approximation techniques,
yielding compressed structures that are not equivalent, but provably close
to the input.

\bibliographystyle{ieeetr}
\bibliography{ref.bib}

\appendix
\section{Detailed proofs}
\label{sec:app_details_1}

We begin by completing the proof of \cref{local-pair-homotopy-equivalence}.

\begin{proposition}
 The maps $r^\bullet$ and $s^\bullet$ are homotopy-inverse one with respect to the other.
\end{proposition}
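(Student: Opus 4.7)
The plan is to handle the two compositions $r^\bullet \circ s^\bullet$ and $s^\bullet \circ r^\bullet$ asymmetrically: the first turns out to be the identity on the nose, while the second requires a single explicit chain homotopy. Both maps are the identity at every level $l \notin \{n, n+1\}$, so all verifications reduce to case analysis at the two interesting degrees, together with their neighbors (where the homotopy equation still has to be checked but both sides vanish trivially).

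For $r^\bullet \circ s^\bullet$, I would evaluate on each basis element of $\bar{B}^l$ directly. At level $n$ we have $s^n(c) = c$, and $r^n(c) = c + \langle c, c_1 \rangle \partial^{n+1}(c_2)$; since every $c \in \bar{B}^n$ satisfies $c \neq c_1$, the correction term vanishes and $r^n s^n(c) = c$. At level $n+1$, for $c \in \bar{B}^{n+1}$ (so $c \neq c_2$), applying $r^{n+1}$ to $s^{n+1}(c) = c + \langle \partial^{n+1}(c), c_1 \rangle c_2$ gives $c + \langle \partial^{n+1}(c), c_1 \rangle \bigl(c_2 + \langle c_2, c_2 \rangle c_2\bigr) = c$, because $c_2 + c_2 = 0$ over $\bbZ_2$. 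So $r^\bullet \circ s^\bullet = \mathrm{id}_{\bar{F}^\bullet}$ strictly.

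For $s^\bullet \circ r^\bullet$, I would introduce the homomorphism $h^l : F^l \to F^{l+1}$ defined by $h^n(c_1) = c_2$ and $h^l(c) = 0$ in every other case. This is grade-preserving because $\gr(c_1) = \gr(c_2)$ by hypothesis. The verification of the homotopy equation $\partial^{l+1} h^l + h^{l-1} \partial^l = s^l r^l - \mathrm{id}^l$ then splits into a handful of cases. At $l = n$ applied to $c_1$, the left-hand side is $\partial^{n+1}(c_2)$, and the right-hand side is $s^n r^n(c_1) - c_1 = \partial^{n+1}(c_2)$ (here one uses that the $c_1$-component of $\partial^{n+1}(c_2)$ cancels with $c_1$ over $\bbZ_2$, so $r^n(c_1)$ actually lies in $\bar{F}^n$). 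For $c \neq c_1$ at level $n$, both sides vanish. At $l = n+1$ the left-hand side reduces to $h^n(\partial^{n+1}(c)) = \langle \partial^{n+1}(c), c_1 \rangle c_2$, which matches the right-hand side $s^{n+1} r^{n+1}(c) - c$; the case $c = c_2$ uses precisely the hypothesis $\langle \partial^{n+1}(c_2), c_1 \rangle = 1$ to force both sides to equal $c_2$, while for $c \neq c_2$ the correction term in $s^{n+1}$ produces exactly $\langle \partial^{n+1}(c), c_1 \rangle c_2$.

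The main obstacle is not conceptual but bookkeeping: one must track that the various correction terms indeed land in $\bar{F}^\bullet$, and the only nontrivial arithmetic fact needed is that the assumption $\langle \partial^{n+1}(c_2), c_1 \rangle = 1$ is precisely what forces the $c_1$-components to cancel in the definition of $r^n$ and the $c_2$-components to behave correctly in $r^{n+1}$. Once these observations are made, each case is a short calculation in $\bbZ_2$.
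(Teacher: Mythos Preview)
Your proposal is correct and follows essentially the same approach as the paper: the paper also shows $r^\bullet \circ s^\bullet = \mathrm{id}_{\bar{F}^\bullet}$ strictly and constructs the same chain homotopy (written there as $\phi^l(c) = \langle c, c_1 \rangle c_2$ for $l=n$ and $0$ otherwise, which is your $h^l$) to witness $s^\bullet \circ r^\bullet \simeq \mathrm{id}_{F^\bullet}$. You have simply filled in the case-by-case verifications that the paper declares ``straightforward to check.''
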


\begin{proof}
Let us define a collection of maps $\phi^l: F^l \rightarrow F^{l+1}$, for any $l\in \bbN$.
Given $c \in B^l$, the map $\phi^l$ is defined as:
\[\phi^l(c)=\begin{cases}
 \langle c, c_1 \rangle c_2 & \text{ if } l=n,\\
0 & \text{ otherwise.}
\end{cases}\]
The result follows from the following facts, which are straightforward to check. 
For any $l$:
  \begin{itemize}
    \item[(1)] $s^l r^l$ is chain-homotopic to $\mathrm{id}_{F^l}$ via $\phi^l$,
    \item[(2)] $r^l s^l = \mathrm{id}_{\bar{F}^l}$.
  \end{itemize}
\end{proof}

We now complete the proofs of the results of \cref{chunk:Optimal}. 
Let $A^\bullet, B^\bullet$ be two chain complexes such that $A^\bullet$ is a subcomplex of $B^\bullet$ and let $\eta^n: H^n(A^\bullet)\rightarrow H^n(B^\bullet)$ be the map in homology induced by this inclusion.
Let us consider the boundary matrix of $B^\bullet$ in which the generators of $A^\bullet$ are placed on the left with respect to the generators of $B^\bullet$ not belonging to $A^\bullet$. By applying the standard reduction algorithm for computing persistent homology~\cite{Edelsbrunner2010}, we obtain a reduced matrix $R$.
To avoid a clash in terminology with previous parts of the paper,
we refer to the entry of lowest position in the matrix among the non-zero ones in column $c'$ as the \emph{lowest index} of $c'$.
Let us denote the columns of $R$ corresponding to the generators of $B^\bullet$ not belonging to $A^\bullet$ as the columns of $B^\bullet\setminus A^\bullet$ and the others as the columns of $A^\bullet$.

\begin{lemma}\label{lemma:matrixReduction}
  We have that:
  \begin{itemize}
    \item the number of the $n$-columns of $B^\bullet\setminus A^\bullet$ having as lowest index a column of $A^\bullet$ is equal to $\dim\,\Ker \, \eta^{n-1}$,
    \item the number of the $n$-columns of $B^\bullet\setminus A^\bullet$ with null boundary for which there is no column in $R$ having them as lowest index is equal to $\dim\,\Coker \, \eta^n$.
  \end{itemize}
\end{lemma}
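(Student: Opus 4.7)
The plan is to exploit the standard pairing structure produced by the persistence reduction algorithm, together with the fact that placing the generators of $A^\bullet$ on the left makes the reduction on $A^\bullet$-columns coincide with the reduction of $A^\bullet$ considered in isolation. First I would establish two preliminary facts. An $A^\bullet$-column reduces to zero in $R$ if and only if it reduces to zero in the reduction of $A^\bullet$ alone, since only left neighbors can participate in its reduction, and those all lie in $A^\bullet$. Second, each row is the lowest index of at most one column of $R$, so the ``lowest index'' relation yields an injective pairing between negative columns (non-zero after reduction) and positive columns (zero after reduction) of dimension one lower. Consequently, an essential homology class in a subcomplex is represented by a positive column that is not the lowest index of any column within that subcomplex.

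For the first claim, I would argue that an $(n-1)$-column $r$ of $A^\bullet$ serves as the lowest index of some $n$-column $c$ of $B^\bullet \setminus A^\bullet$ if and only if $r$ is positive, is unpaired within the reduction of $A^\bullet$, and becomes paired with $c$ in the reduction of $B^\bullet$. By the preliminary facts this means $r$ represents a non-trivial class $[r] \in H^{n-1}(A^\bullet)$. Moreover, the reduced column $c$ exhibits $r$ (plus chains of strictly lower index) as the boundary of an $n$-chain in $B^\bullet$, so $[r]$ vanishes in $H^{n-1}(B^\bullet)$, placing $[r]$ in $\Ker \eta^{n-1}$. A standard argument in persistent homology then shows that sending each such $c$ to $[r]$ produces a basis of $\Ker \eta^{n-1}$, giving the claimed count.

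For the second claim, I would use that the total number of positive $n$-columns of $B^\bullet$ not serving as lowest index of any column equals $\dim H^n(B^\bullet)$, and split this count across $A^\bullet$ and $B^\bullet \setminus A^\bullet$. The positive $n$-columns of $A^\bullet$ that remain unpaired in $R$ correspond to essential $A^\bullet$-classes that survive into $B^\bullet$; by the reasoning of claim~1 applied one dimension higher, their number is $\dim H^n(A^\bullet) - \dim \Ker \eta^n = \dim \Im \eta^n$. Subtracting from $\dim H^n(B^\bullet)$ leaves $\dim H^n(B^\bullet) - \dim \Im \eta^n = \dim \Coker \eta^n$ for the contribution from $B^\bullet \setminus A^\bullet$, which is exactly the count in the statement.

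The main obstacle will be justifying rigorously that the correspondence in claim~1 produces a basis of $\Ker \eta^{n-1}$, not merely an injection into it. Linear independence is straightforward, since the positive columns identified have pairwise distinct row indices and hence represent linearly independent cycles in $A^\bullet$. Surjectivity is the delicate part: one must verify that every class in $\Ker \eta^{n-1}$ is realized by such a pairing, i.e., that whenever a cycle in $A^\bullet$ becomes a boundary in $B^\bullet$, the reduction algorithm necessarily ``picks it up'' as the lowest index of some $B^\bullet \setminus A^\bullet$-column. This is standard but needs careful bookkeeping of how the reduced column of a negative $n$-column in $B^\bullet \setminus A^\bullet$ decomposes with respect to the positive columns of $A^\bullet$, and it is precisely here that the ordering convention (placing $A^\bullet$ on the left) plays its essential role.
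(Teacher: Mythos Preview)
Your proposal is correct and, for the first bullet, proceeds exactly as the paper does: both identify the relevant $n$-columns with essential $(n-1)$-cycles of $A^\bullet$ that become boundaries in $B^\bullet$, invoking the standard persistence pairing and the fact that the left placement of $A^\bullet$ makes its internal reduction agree with the global one.

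For the second bullet your route differs from the paper's. The paper argues directly: it simply asserts that the unpaired zero $n$-columns of $B^\bullet\setminus A^\bullet$ are in bijection with the classes ``born'' in $H^n(B^\bullet)$, again citing the standard reduction algorithm. You instead do a subtractive dimension count: the total number of unpaired positive $n$-columns of $B^\bullet$ is $\dim H^n(B^\bullet)$, and you compute the $A^\bullet$ contribution as $\dim\Im\eta^n$ by invoking claim~1 one dimension up, leaving $\dim\Coker\eta^n$ for the $B^\bullet\setminus A^\bullet$ contribution. Your approach has the advantage of reducing claim~2 entirely to claim~1 plus a single global fact, so once you have done the careful surjectivity argument you flag for claim~1, claim~2 comes for free; the paper's direct assertion is shorter but leaves a second bijection to be implicitly trusted.
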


\begin{proof}
We can partition the $n$-columns of $B^\bullet\setminus A^\bullet$ in three distinct classes.
The first class includes the $n$-columns of $B^\bullet\setminus A^\bullet$ having as lowest index a column of $A^\bullet$. According to the standard reduction algorithm, these columns are in one-to-one correspondence with the non-zero homology classes of $H^{n-1}(A^\bullet)$ that become trivial in $H^{n-1}(B^\bullet)$. So, their number coincides with the dimension of $\Ker \, \eta^{n-1}$.
The second class includes the $n$-columns of $B^\bullet\setminus A^\bullet$ with null boundary for which there is no column in $R$ having them as lowest index.
According to the standard reduction algorithm, these columns are in one-to-one correspondence with the homology classes that are born in $H^{n}(B^\bullet)$. So, their number coincides with the dimension of $\Coker \, \eta^{n}$.
The third class of $n$-columns of $B^\bullet\setminus A^\bullet$ are the ones with lowest index in $B^\bullet\setminus A^\bullet$ or the columns with null boundary which are the lowest index of a column of $B^\bullet\setminus A^\bullet$.
\end{proof}

\begin{theorem}\label{thm:optimality2}
Let $G^{\bullet}$ be the output of \textup{\multichunk} applied to the input $F^{\bullet}$. Then $\delta_{\bfp}^n(F^{\bullet})=\gamma_{\bfp}^n(G^{\bullet})$.
\end{theorem}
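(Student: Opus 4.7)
The plan is to apply the matrix reduction lemma with $A^\bullet = F^\bullet_{<\bfp}$ and $B^\bullet = F^\bullet_{\bfp}$, and to match the three classes of $n$-columns of $B^\bullet \setminus A^\bullet$ produced by the standard reduction with the three labels (global, local positive, local negative) assigned by Phase I of \multichunk at grade $\bfp$. Since Phases II and III remove precisely the local generators while leaving the global ones untouched, $\gamma_{\bfp}^n(G^\bullet)$ equals the number of global $n$-generators of $F^\bullet$ at grade $\bfp$. It therefore suffices to show that the set of global $n$-generators at grade $\bfp$ corresponds exactly to the first two classes of the lemma, whose cardinalities sum to $\dim \Ker \eta_{\bfp}^{n-1} + \dim \Coker \eta_{\bfp}^n = \delta_{\bfp}^n(F^\bullet)$.

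The central observation driving the correspondence is that adding a dim-$n$ column of grade strictly less than $\bfp$ (a column belonging to $A^\bullet$) to a dim-$n$ column at grade $\bfp$ cannot modify the entries at rows of grade $\bfp$, because the former column has support only at rows of grade strictly less than $\bfp$. Consequently, the entries at rows of grade $\bfp$ of any grade-$\bfp$ column agree after the standard persistence reduction of the combined boundary matrix and after \multichunk's local reduction at grade $\bfp$. Since rows are ordered compatibly with grade, a pivot at a row of grade $\bfp$ is therefore identical under the two reductions.

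I would then verify the three correspondences in turn. A local negative $c$ at grade $\bfp$ has, by definition, a pivot at a row of grade $\bfp$ after local reduction, and by the observation this pivot survives standard reduction, so $c$'s lowest index lies in $B^\bullet \setminus A^\bullet$ and $c$ falls in the third class. A local positive $c$ is the local pivot of some local negative $(n+1)$-column $c'$ at grade $\bfp$; the observation applied in dimension $n+1$ shows that $c$ remains the lowest index of $c'$ in the standard reduction, and the persistent-homology fact that each simplex plays at most one role (positive or negative) in the pivot pairing forces $c$'s own column to reduce to zero, landing $c$ in the third class. A global column $c$ at grade $\bfp$ cannot be paired with any higher-dimensional column in the standard reduction, because such a partner would necessarily live at grade $\bfp$ (a partner at grade $<\bfp$ would have no entry at $c$'s row) and would force $c$ to be labeled local positive by the previous correspondence; hence $c$ either reduces to a non-zero column whose pivot lies at a row of grade $<\bfp$ (first class) or reduces to zero without being paired (second class).

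The main obstacle is ensuring that the processing order of \multichunk, which sweeps dimensions in decreasing order and skips previously labeled local positives, produces exactly the same pair structure as the standard reduction of the combined boundary matrix. The central observation handles this within each grade but should be articulated as an induction on dimension in the full proof. Once the three correspondences are established, the identity $\gamma_{\bfp}^n(G^\bullet) = \dim \Ker \eta_{\bfp}^{n-1} + \dim \Coker \eta_{\bfp}^n$ follows immediately from the lemma's counts, completing the argument.
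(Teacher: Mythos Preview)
Your proposal is correct and follows essentially the same approach as the paper: both apply Lemma~\ref{lemma:matrixReduction} with $A^\bullet=F^\bullet_{<\bfp}$, $B^\bullet=F^\bullet_{\bfp}$ and argue that the global columns at grade $\bfp$ are exactly those falling into the first two classes of the lemma, while local columns fall into the third. Your ``central observation'' (columns of grade $<\bfp$ cannot touch rows of grade $\bfp$) is the same fact the paper phrases as ``all columns in $F^\bullet_{<\bfp}$ have a lowest index with smaller index than any column of grade $\bfp$,'' and your explicit separation of the local-positive case (using the standard fact that a pivot row corresponds to a zero column, together with the induction on dimension you flag) is a welcome clarification of a step the paper leaves somewhat implicit.
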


\begin{proof}
Let us apply Lemma~\ref{lemma:matrixReduction} to the case $A^\bullet=F_{<\bfp}^\bullet$ and $B^\bullet=F_{\bfp}^\bullet$. Note that the local reduction phase
at grade $\bfp$ in \multichunk \, corresponds to a partial reduction
of the matrix of $F_{\bfp}^\bullet$, because all column additions are left-to-right,
and it is known that the lowest indices of the reduced matrix are invariant
under left-to-right column additions. At the end of the local reduction phase,
each column is labeled as local (positive or negative), or global.
A local column is paired with another local column of grade $\bfp$,
and this pairing will not change when completing the partial reduction
to a completely reduced matrix for $F_{\bfp}^\bullet$.
The reason is simply that all columns
in $F_{<\bfp}^\bullet$ have a lowest index with smaller index
than any column of grade $\bfp$.
By the above reasoning, the local columns will neither contribute to
kernel nor cokernel of $\eta_{\bfp}^{n-1}$.

On the other hand, consider a global column with grade $\bfp$. Its lowest index
corresponds to a column in $F_{<\bfp}^\bullet$ after the local reduction phase.
Since the lowest index can only decrease when further reducing the matrix,
it follows that in the reduced matrix, the global column has
still a lowest index in $F_{<\bfp}^\bullet$, in which case it contributes to $\Ker\,\eta_{\bfp}^{n-1}$, or its boundary becomes null. In the latter case, it cannot be paired with
another column at grade $\bfp$ since then, this pairing would have been
identified already at the local reduction phase. Hence, the global column
contributes to $\Coker \, \eta_{\bfp}^{n}$.
After all, each global column contributes to the kernel or the cokernel,
and therefore, the number of global columns is equal to $\delta_{\bfp}^n(F^{\bullet})$.
Moreover, the number of global columns at grade $\bfp$ is exactly the number
of generators added at grade $\bfp$ to the output complex $G^{\bullet}$.
\end{proof}

Now, let $\bar{F}^{\bullet}$ be any chain complex of finitely generated free persistence modules. 
Since each $n$-generator newly introduced in $\bar{F}_{\bfp}^{\bullet}$ can act on the homology by just increasing the dimension of $\Ker \, \eta_{\bfp}^{n-1}$, or alternatively of $\Coker \, \eta_{\bfp}^n$, at most by 1, we have that
\begin{equation}\label{eq:ker-coker}
\delta_{\bfp}^n(\bar{F}^{\bullet})=\dim\,\Ker \, \eta_{\bfp}^{n-1} + \dim\,\Coker \, \eta_{\bfp}^n \leq \dim\,\bar{F}_{\bfp}^n - \dim\,\bar{F}_{<\bfp}^n=\gamma_{\bfp}^n(\bar{F}^{\bullet}).
\end{equation}

\begin{theorem}\label{lemma:leq2}
Any chain complex of free persistence modules $\bar{F}^{\bullet}$ quasi-isomorphic to $F^{\bullet}$ has to add at least $\delta_{\bfp}^n(F^{\bullet})$ $n$-generators at grade $\bfp$. I.e., $\delta_{\bfp}^n(F^{\bullet})\leq \gamma_{\bfp}^n(\bar{F}^{\bullet})$.
\end{theorem}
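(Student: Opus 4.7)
The plan is to reduce the claimed inequality to \cref{eq:ker-coker}, which already gives $\delta_{\bfp}^n(\bar{F}^{\bullet}) \leq \gamma_{\bfp}^n(\bar{F}^{\bullet})$ for any chain complex of free persistence modules. Hence it suffices to prove that
\[
\delta_{\bfp}^n(F^{\bullet}) \;=\; \delta_{\bfp}^n(\bar{F}^{\bullet})
\]
whenever $F^{\bullet}$ and $\bar{F}^{\bullet}$ are quasi-isomorphic; the theorem then follows by chaining the two (in)equalities.

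To prove this equality of dimensions, I would show that $\dim \Ker \eta_{\bfp}^{n-1}$ and $\dim \Coker \eta_{\bfp}^{n}$ are invariants of the quasi-isomorphism class of $F^{\bullet}$. The main ingredient is the Mayer--Vietoris short exact sequence of chain complexes of vector spaces
\[
0 \to F^{\bullet}_{\bfp_0} \to F^{\bullet}_{\bfp_1} \oplus F^{\bullet}_{\bfp_2} \to F^{\bullet}_{<\bfp} \to 0,
\]
which comes directly from the pointwise short exact sequence displayed in \cref{chunk:Optimal} (the first map is $(\iota_1, \iota_2)$ for the inclusions from $\bfp_0$ into $\bfp_1$ and $\bfp_2$, the second is $\iota_1 - \iota_2$). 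The inclusion of this sequence into the corresponding constant sequence at $\bfp$ then gives a commutative diagram whose connecting homomorphisms produce $\eta_{\bfp}^{\bullet}$ from the long exact sequences.

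The key steps I would carry out in order are: (i) write down the induced long exact sequences in homology for both $F^{\bullet}$ and $\bar{F}^{\bullet}$; (ii) observe that a quasi-isomorphism $F^{\bullet} \to \bar{F}^{\bullet}$ of persistence modules is pointwise a quasi-isomorphism in every grade, so in particular it induces isomorphisms $H^k(F^{\bullet}_{\bfq}) \iso H^k(\bar{F}^{\bullet}_{\bfq})$ for each $\bfq \in \{\bfp_0, \bfp_1, \bfp_2, \bfp\}$ and each $k$; (iii) by the naturality of Mayer--Vietoris, these isomorphisms assemble into a commutative ladder between the two long exact sequences; (iv) apply the five-lemma twice to conclude that the induced maps $H^n(F^{\bullet}_{<\bfp}) \to H^n(\bar{F}^{\bullet}_{<\bfp})$ and $H^{n-1}(F^{\bullet}_{<\bfp}) \to H^{n-1}(\bar{F}^{\bullet}_{<\bfp})$ are isomorphisms; (v) use the naturality of the inclusion $F^{\bullet}_{<\bfp} \hookrightarrow F^{\bullet}_{\bfp}$ and the corresponding one for $\bar{F}^{\bullet}$ to obtain a commutative square identifying $\eta_{\bfp}^{n}$ (resp.\ $\eta_{\bfp}^{n-1}$) with its $\bar{F}$-counterpart up to isomorphism at both source and target. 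This immediately yields $\dim \Ker \eta_{\bfp}^{n-1}(F^{\bullet}) = \dim \Ker \eta_{\bfp}^{n-1}(\bar{F}^{\bullet})$ and similarly for cokernels, so $\delta_{\bfp}^n(F^{\bullet}) = \delta_{\bfp}^n(\bar{F}^{\bullet})$. Combining with \cref{eq:ker-coker} completes the proof.

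The main obstacle I foresee is the bookkeeping in step (iii)--(v): one has to verify that the quasi-isomorphism $F^{\bullet} \to \bar{F}^{\bullet}$, which is natural in the persistence-module parameter, really does commute with both the Mayer--Vietoris connecting maps at $<\bfp$ and the inclusion-induced map into grade $\bfp$. This is a matter of chasing the definitions (the Mayer--Vietoris sequence is constructed functorially from the pushout $\bfp_1 \cup_{\bfp_0} \bfp_2$ in $\bbZ^2$, and the inclusion into $\bfp$ is induced by the pushout map $\bfp_1 \cup_{\bfp_0} \bfp_2 \to \bfp$), but it is the only non-routine part of the argument.
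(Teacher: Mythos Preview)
Your proposal is correct and follows essentially the same route as the paper: reduce to \cref{eq:ker-coker} by showing $\delta_{\bfp}^n$ is a quasi-isomorphism invariant, using the Mayer--Vietoris long exact sequence together with the five-lemma to identify $H^{\ast}(F^{\bullet}_{<\bfp})$ with $H^{\ast}(\bar{F}^{\bullet}_{<\bfp})$, and then the naturality square for the inclusion into grade $\bfp$. The only cosmetic difference is that the paper names the two comparison maps $\iota_{\bfp}^n$ and $\eta_{\bfp}^n$ rather than using $\eta$ for both.
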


\begin{proof}
Let $f^\bullet: F^{\bullet} \rightarrow \bar{F}^{\bullet}$ be the quasi-isomorphism between $F^{\bullet}$ and $\bar{F}^{\bullet}$.
Thanks to their commutativity with the inclusion maps, we have an induced chain map $f_{<\bfp}^\bullet: F_{<\bfp}^{\bullet} \rightarrow \bar{F}_{<\bfp}^{\bullet}$.
This ensures for any $\bfp \in \bbZ^2$, the construction of the following commutative diagram
\[
\begin{tikzcd}
&0  \arrow[r] &F_{\bfp_0}^n \arrow[r] \arrow[d, "f_{\bfp_0}^n"] &F_{\bfp_1}^n \oplus F_{\bfp_2}^n \arrow[r] \arrow[d, "{(f_{\bfp_1}^n, f_{\bfp_2}^n)}"] &F_{<\bfp}^n \arrow[r]\arrow[d, "f_{<\bfp}^n"] &0 \\
&0  \arrow[r] &\bar{F}_{\bfp_0}^n \arrow[r] &\bar{F}_{\bfp_1}^n \oplus \bar{F}_{\bfp_2}^n \arrow[r] &\bar{F}_{<\bfp}^n \arrow[r] &0 \\
\end{tikzcd}
\]
in which each row is exact.
Thanks to the quasi-isomorphism established by $f$ and to the Mayer-Vietoris sequence, we can derive from the previous one the following commutative diagram

{\tiny
\[
\begin{tikzcd}
&H^n(F_{p_0}^\bullet)  \arrow[r] \arrow[d, "\psi_{p_0}^n"] &H^n(F_{p_1}^\bullet) \oplus H^n(F_{p_2}^\bullet)  \arrow[r] \arrow[d, "{(\psi_{p_1}^n, \psi_{p_2}^n)}"] &H^{n}(F_{<p}^\bullet)  \arrow[r] \arrow[d, "\psi_{<p}^n"] &H^{n-1}(F_{p_0}^\bullet)  \arrow[r] \arrow[d, "\psi_{p_0}^{n-1}"] &H^{n-1}(F_{p_1}^\bullet) \oplus H^{n-1}(F_{p_2}^\bullet) \arrow[d, "{(\psi_{p_1}^{n-1}, \psi_{p_2}^{n-1})}"] \\
&H^n(\bar{F}_{p_0}^\bullet)  \arrow[r] &H^n(\bar{F}_{p_1}^\bullet) \oplus H^n(\bar{F}_{p_2}^\bullet)  \arrow[r] &H^{n}(\bar{F}_{<p}^\bullet)  \arrow[r] &H^{n-1}(\bar{F}_{p_0}^\bullet)  \arrow[r] &H^{n-1}(\bar{F}_{p_1}^\bullet) \oplus H^{n-1}(\bar{F}_{p_2}^\bullet) \\
\end{tikzcd}
\]
}
in which each row is exact and the horizontal and the vertical maps are induced by the corresponding ones between chain complexes.
Since all the other four vertical maps are isomorphisms, the $5$-lemma ensures that map $\psi_{<\bfp}^n$ is also an isomorphism. So, we have the following commutative diagram

\[
\begin{tikzcd}
&H^n(F_{<\bfp}^\bullet) \arrow[r, "\iota_{\bfp}^n"] \arrow[d, "\psi_{<\bfp}^n"'] 
&H^n(F_{\bfp}^\bullet) \arrow[d, "\psi_{\bfp}^n"] \\
&H^n(\bar{F}_{<\bfp}^\bullet) \arrow[r, "\eta_{\bfp}^n"'] 
&H^n(\bar{F}_{\bfp}^\bullet) \\
\end{tikzcd}
\]

in which vertical maps are isomorphisms.
Then, for any $n$, $\dim\,\Ker \, \eta_{\bfp}^{n}=\dim\,\Ker \, \iota_{\bfp}^{n} $ and $\dim\,\Coker \, \eta_{\bfp}^n=\dim\,\Coker \, \iota_{\bfp}^n$.
So, by applying Equation (\ref{eq:ker-coker}), we have that
\begin{align*}
\delta_{\bfp}^n(F^{\bullet}) 
= \dim\,\Ker \, \iota_{\bfp}^{n-1} + \dim\,\Coker \, \iota_{\bfp}^n 
&= \dim\,\Ker \, \eta_{\bfp}^{n-1} + \dim\,\Coker \, \eta_{\bfp}^n \\
&\leq \dim\,\bar{F}_{\bfp}^n - \dim\,\bar{F}_{<\bfp}^n 
= \gamma_{\bfp}^n(\bar{F}^{\bullet}).
\end{align*}
\end{proof}

\section{Pseudocode}
\label{app:pseudocode}

In this section, we give pseudocode for some of the algorithms discussed in the paper.  
Algorithm~\ref{alg:locality} describes the locality test, while Algorithms~\ref{alg:phaseI}, \ref{alg:phaseII}, and \ref{alg:phaseIII} describe the three phases (local reduction, compression, and removal of local columns) of \multichunk.
Algorithms~\ref{alg:reduce_lw}--\ref{alg:minimize_lw}
describe the subroutines of the LW algorithm, 
and Algorithms~\ref{alg:reduce_new}--\ref{alg:ker_basis_new} 
describe our improved versions. 
We mark the for loops in the pseudocode that can be run in parallel.

\begin{algorithm*}
\caption{Locality test}
\label{alg:locality}
\begin{algorithmic}
\Function{is\_local}{$A,i$}
    \If {column $i$ in $A$ is $0$}
      \State{\Return {$true$}}
    \EndIf
    \State {$j\gets$ pivot of column $i$ in $A$}
    \State {Return true iff the column grade of $i$ in $A$ equals the row grade of $j$ in $A$}
\EndFunction
\end{algorithmic}
\end{algorithm*}

\begin{algorithm*}
\caption{Local reduction}
\label{alg:phaseI}
\begin{algorithmic}
\Function{Local\_Reduction}{$A$, $LocPos$, $LocNeg$, $Global$}
    \LeftComment{$A$, a graded boundary matrix $[\partial^{n}]^{B^n, B^{n-1}}$ of a chain complex $F^\bullet$}
    \LeftComment{$LocPos$, $LocNeg$, $Global$, subsets of the generators of $F^\bullet$}
    \For{$c$ column of $A$ and not in $LocPos \cup LocNeg \cup Global$}
    \LeftComment{Columns are traversed in increasing order w.r.t. index $i$}
    \While{\Call{is\_local}{A,c} and $\exists c'$ column of $A$ with $i(c')<i(c)$}
    \State{$c\gets c+ c'$}
    \EndWhile
    \If{\Call{is\_local}{A,c}}
    \State{Add $c$ to $LocNeg$}
    \State{Add the local pivot $c'$ of $c$ to $LocPos$}
    \Else
    \State{Add $c$ to $Global$}
    \EndIf
    \EndFor
    \State{\Return{$LocPos$, $LocNeg$, $Global$}}
\EndFunction
\end{algorithmic}
\end{algorithm*}

\begin{algorithm*}
\caption{Compression}
\label{alg:phaseII}
\begin{algorithmic}
\Function{Compression}{$A$, $LocPos$, $LocNeg$, $Global$}
    \LeftComment{$A$, a graded boundary matrix $[\partial^{n}]^{B^n, B^{n-1}}$ of a chain complex $F^\bullet$}
    \LeftComment{$LocPos$, $LocNeg$, $Global$, subsets of the generators of $F^\bullet$}
    \For{$c$ column of $A$ and in $Global$}
    \While{boundary of $c$ contains an elements of $LocNeg\cup LocPos$}
    \State{Pick local generator $c'$ in the boundary of $c$ with maximal index}
    \If{$c'\in LocNeg$}
    \State{Remove $c'$ from the boundary of $c$}
    \Else
    \Comment{$c'\in LocPos$}
    \State{Denote as $c''$ the $n$-column in $LocNeg$ with $c'$ as local pivot}
    \State{$c\gets c+ c''$}
    \EndIf
    \EndWhile
    \EndFor
    \State{\Return{$A$}}
\EndFunction
\end{algorithmic}
\end{algorithm*}

\begin{algorithm*}
\caption{Removal of local pairs}
\label{alg:phaseIII}
\begin{algorithmic}
\Function{Local\_Pairs\_Removal}{$A$, $LocPos$, $LocNeg$, $Global$}
    \LeftComment{$A$, a graded boundary matrix $[\partial^{n}]^{B^n, B^{n-1}}$ of a chain complex $F^\bullet$}
    \LeftComment{$LocPos$, $LocNeg$, $Global$, subsets of the generators of $F^\bullet$}
    \For{$c$ column of $A$}
    \If{$c \in LocNeg\cup LocPos$}
    \State{Remove $c$ from $A$}
    \EndIf
    \EndFor
    \For{$r$ row of $A$}
    \If{$r \in LocNeg\cup LocPos$}
    \State{Remove $r$ from $A$}
    \EndIf
    \EndFor
    \State{\Return{$A$}}
\EndFunction
\end{algorithmic}
\end{algorithm*}

\begin{algorithm*}
\caption{Matrix reduction, LW-version}
\label{alg:reduce_lw}
\begin{algorithmic}
\Function{reduce\_LW}{$A,j,use\_auxiliary=false$}
    \LeftComment{$A$ maintains a pivot vector $piv$}
    \LeftComment{$piv[i]=k$ means that column $k$ in $A$ has pivot $i$.}
    \LeftComment{$piv[i]=-1$ means that no (visited) column in $A$ has pivot $i$.}
    \LeftComment{If \textit{use\_auxiliary} is set, $A$ also maintains an auxiliary matrix.}
    \While{column $j$ in $A$ is not empty}
    \State {$i\gets$ pivot of column $j$}
    \State {$k\gets piv[i]$}
    \If {$k=-1$ or $k>j$}
    \State $piv[i]\gets j$
    \State {\textbf{break}}
    \Else
    \State {add column $k$ to column $j$}
    \If {\textit{use\_auxiliary}}
    \State {add auxiliary-column $k$ to auxiliary-column $j$}
    \EndIf
    \EndIf
    \EndWhile
\EndFunction
\end{algorithmic}
\end{algorithm*}

\begin{algorithm*}
\caption{Min\_gens, LW-version}
\label{alg:min_gens_lw}
\begin{algorithmic}
\Function{Min\_gens\_LW}{$A$}
    \State {$(X,Y) \gets grid\_dim\_of(A)$}\Comment{grades of $A$ are on $X\times Y$ grid}
    \State {$Out\gets\emptyset$}
    \For {$x=1,\ldots,X$}
    \For {$y=1,\ldots,Y$}
    \State {$L\gets$column indices of $A$ with grades $(0,y),\ldots(x-1,y)$ in order}
    \For {$i\in L$}
    \State {\Call{Reduce\_LW}{A,i}}
    \EndFor
    \State {$I\gets$ column indices of $A$ with grade $(x,y)$}
    \For {$i\in I$}
    \State {\Call{Reduce\_LW}{A,i}}
    \If {column $i$ is not zero}
    \State {Append column $i$ to $Out$ with grade $(x,y)$}
    \EndIf
    \EndFor
    \EndFor
    \EndFor
    \State{\Return{Out}}
\EndFunction
\end{algorithmic}
\end{algorithm*}

\begin{algorithm*}
\caption{Ker\_basis, LW-version}
\label{alg:ker_basis_lw}
\begin{algorithmic}
\Function{Ker\_basis\_LW}{$B$}
    \State {$(X,Y) \gets grid\_dim\_of(B)$}\Comment{grades of $B$ are on $X\times Y$ grid}
    \State {$Out\gets\emptyset$}
    \For {$x=1,\ldots,X$}
    \For {$y=1,\ldots,Y$}
    \State {$L\gets$column indices of $B$ with grades $(0,y),\ldots(x,y)$ in order}
    \For {$i\in L$}
    \State {\Call{Reduce\_LW}{B,i,use\_auxiliary=true}}
    \If {column $i$ has turned from non-zero to zero}
    \State {Append auxiliary-column of $i$ to $Out$ with grade $(x,y)$}
    \EndIf
    \EndFor
    \EndFor
    \EndFor
    \State{\Return {$Out$}}
\EndFunction
\end{algorithmic}
\end{algorithm*}

\begin{algorithm*}
\caption{Reparameterize}
\label{alg:reparam}
\begin{algorithmic}
\Function{Reparam}{$G,K$}
    \LeftComment{$G$ is the output of $\Call{Min\_gens}{A}$}
    \LeftComment{$K$ is the output of $\Call{Ker\_basis}{B}$}
    \State {Form matrix $(K|G)$ by concatenation.}
    \State {$Out\gets\emptyset$}
    \For {$i$ in index range of $G$-column in $(K|G)$} \Comment{Parallelizable}
    \State {\Call{Reduce\_LW}{$(K|G),i,use\_auxiliary=true$}}\Comment{Column $i$ is zero afterwards}
    \State {Append the auxiliary column $i$ to $Out$}
    \EndFor
    \State{\Return {$Out$}}
\EndFunction
\end{algorithmic}
\end{algorithm*}

\begin{algorithm*}
\caption{Minimization, LW-version}
\label{alg:minimize_lw}
\begin{algorithmic}
\Function{Minimize\_LW}{$M'$}
\State {$n\gets\text{\# columns in $M'$}$} 
\For {$i=1,\ldots,n$} 
\If{\Call{is\_local}{$M',i$}}
\State {$j\gets$ pivot of column $i$ in $M'$}
\For{$k=i+1,\ldots,n$}
\If{column $k$ in $M'$ contains $j$ as row index}
\State{Add column $i$ to column $k$}
\EndIf
\EndFor
\State {Mark column $i$ and row $j$ in $M'$}
\EndIf
\EndFor
\State {$M\gets$ submatrix of $M'$ consisting of unmarked rows and columns}
\State {Re-index the columns of $M$ and \Return{$M$}}
\EndFunction
\end{algorithmic}
\end{algorithm*}


\begin{algorithm*}
\caption{Matrix reduction, new version}
\label{alg:reduce_new}
\begin{algorithmic}
\Function{Reduce\_New}{$A,i,grade,use\_auxiliary=false,local\_check=false$}
    \LeftComment{$grade$ is the grade that the algorithm is currently considering}
    \LeftComment{On top of pivot vector and auxiliary vector, $A$ maintains priority queues:}
    \LeftComment{$cols\_at\_y\_grade[y_0]$ is a priority queue for column indices at $y$-grade $y_0$}
    \LeftComment{$grade\_queue$ is a priority queue of grades that the algorithm needs to look at}
    \While{column $i$ in $A$ is not empty}
    \State {$j\gets$ pivot of column $i$}
    \State {$k\gets piv[j]$}
    \If {$k=-1$}
    \State $piv[j]\gets i$
    \State {\textbf{break}}
    \EndIf
    \If {$k>j$}
    \State {$y\gets$ $y$-grade of column $k$ in $A$}
    \State {Push $k$ to $cols\_at\_y\_grade[y]$}
    \State {$x\gets$ $x$-grade of $grade$}
    \State {Push $(x,y)$ to $grade\_queue$}
    \State $piv[j]\gets i$
    \State {\textbf{break}}
    \EndIf
    \If {\texttt{local\_check} and not \Call{is\_local}{A,i}}
    \State {\textbf{break}}
    \EndIf
    \State {add column $k$ to column $i$}
    \If {\textit{use\_auxiliary}}
    \State {add auxiliary-column $k$ to auxiliary-column $i$}
    \EndIf
    \EndWhile
\EndFunction
\end{algorithmic}
\end{algorithm*}

\begin{algorithm*}
\caption{Min\_gens, new version}
\label{alg:min_gens_new}
\begin{algorithmic}
\Function{Min\_gens\_New}{$A$}
    \State {Initialize $grade\_queue$ as empty priority queue}
    \For {each column $c$ of $A$}
    \State{Push the grade of $c$ into $grade\_queue$}
    \EndFor
    \For {each $y$-grade $y_0$ that appears in a column of $A$}
    \State {Initialize $cols\_at\_y\_grade[y_0]$ as empty priority queue}
    \EndFor
    \State {$Out\gets\emptyset$}
    \While{$grade\_queue$ is not empty}
    \State{Pop the grade $(x,y)$ from $grade\_queue$ minimal in lex order}
    \State{Push all column indices at grade $(x,y)$ into $cols\_at\_y\_grade[y]$}
    \While{$cols\_at\_y\_grade[y]$ is not empty}
    \State{Pop column index $i$ from $cols\_at\_y\_grade[y]$ minimal in lex order}
    \State{\Call{Reduce\_new}{A,i,(x,y)}}\Comment{Might change the priority queues as side effect}
    \If{column $i$ is not $0$ and its grade is $(x,y)$}
    \State {Append column $i$ to $Out$ with grade $(x,y)$}
    \EndIf
    \EndWhile
    \EndWhile
    \State{\Return{Out}}
\EndFunction
\end{algorithmic}
\end{algorithm*}

\begin{algorithm*}
\caption{Ker\_basis, new version}
\label{alg:ker_basis_new}
\begin{algorithmic}
\Function{Ker\_basis\_new}{$B$}
    \State {Initialize $grade\_queue$ as empty priority queue}
    \For {each column $c$ of $B$}
    \State{Push the grade of $c$ into $grade\_queue$}
    \EndFor
    \For {each $y$-grade $y_0$ that appears in a column of $B$}
    \State {Initialize $cols\_at\_y\_grade[y_0]$ as empty priority queue}
    \EndFor
    \State {$Out\gets\emptyset$}
    \While{$grade\_queue$ is not empty}
    \State{Pop the grade $(x,y)$ from $grade\_queue$ minimal in lex order}
    \State{Push all column indices at grade $(x,y)$ into $cols\_at\_y\_grade[y]$}
    \While{$cols\_at\_y\_grade[y]$ is not empty}
    \State{Pop column index $i$ from $cols\_at\_y\_grade[y]$ minimal in lex order}
    \State{\Call{Reduce\_new}{B,i,(x,y),use\_auxiliary=true}}\Comment{Might change the priority queues as side effect}
    \If{column $i$ has turned from non-zero to zero}
    \State {Append auxiliary-column $i$ to $Out$ with grade $(x,y)$}
    \EndIf
    \EndWhile
    \EndWhile
    \State{\Return{Out}}
\EndFunction
\end{algorithmic}
\end{algorithm*}

\end{document}